\newcommand{\newsection}[1]{\setcounter{equation}{0} \section{#1}}
\newcommand{\bea}{\begin{eqnarray}}
\newcommand{\eea}{\end{eqnarray}}
\newcommand{\clb}{\mathcal{B}}
\newcommand{\cle}{\mathcal{E}}
\newcommand{\clh}{\mathcal{H}}
\newcommand{\clo}{\mathcal{O}}
\newcommand{\clw}{\mathcal{W}}
\newcommand{\D}{\mathbb{D}}
\newcommand{\raro}{\rightarrow}
\def\textmatrix#1&#2\\#3&#4\\{\bigl({#1 \atop #3}\ {#2 \atop #4}\bigr)}
\def\dispmatrix#1&#2\\#3&#4\\{\left({#1 \atop #3}\ {#2 \atop #4}\right)}
\newcommand{\be}{\begin{equation}}
\newcommand{\ee}{\end{equation}}
\newcommand{\ben}{\begin{eqnarray*}}
\newcommand{\een}{\end{eqnarray*}}
\newcommand{\NI}{\noindent}
\newcommand{\bi}{\begin{itemize}}
\newcommand{\ei}{\end{itemize}}
\newcommand{\diag}{\mbox{diag}}
\newcommand\tT{{\tilde{T}}}
\newcommand\h{\frac{1}{2}}
\newcommand\BL{\langle}
\newcommand\BR{\rangle}
\newtheorem{Theorem}{\sc Theorem}[section]
\newtheorem{Lemma}[Theorem]{\sc Lemma}
\newtheorem{Proposition}[Theorem]{\sc Proposition}
\newtheorem{Corollary}[Theorem]{\sc Corollary}
\newtheorem{Definition}[Theorem]{\sc Definition}
\newtheorem{Example}[Theorem]{\sc Example}
\newtheorem{Remark}[Theorem]{\sc Remark}
\newtheorem{Note}[Theorem]{\sc Note}
\newtheorem{Question}{\sc Question}
\newtheorem{ass}[Theorem]{\sc Assumption}
\newcommand{\bt}{\begin{Theorem}}
\def\beginlem{\begin{Lemma}}
\def\beginprop{\begin{Proposition}}
\def\begincor{\begin{Corollary}}
\def\begindef{\begin{Definition}}
\def\beginexamp{\begin{Example}}
\def\beginrem{\begin{Remark}}
\def\beginq{\begin{Question}}
\def\beginass{\begin{ass}}
\def\beginnote{\begin{Note}}
\newcommand{\et}{\end{Theorem}}
\def\endlem{\end{Lemma}}
\def\endprop{\end{Proposition}}
\def\endcor{\end{Corollary}}
\def\enddef{\end{Definition}}
\def\endexamp{\end{Example}}
\def\endrem{\end{Remark}}
\def\endq{\end{Question}}
\def\endass{\end{ass}}
\def\endnote{\end{Note}}
\begin{document}

\title[Tridiagonal kernels, left-invertible operators and Aluthge transforms]{Tridiagonal kernels and left-invertible operators with applications to Aluthge transforms}


\author[Das]{Susmita Das}
\address{Indian Statistical Institute, Statistics and Mathematics Unit, 8th Mile, Mysore Road, Bangalore, 560059,
India}
\email{susmita.das.puremath@gmail.com}

\author[Sarkar]{Jaydeb Sarkar}
\address{Indian Statistical Institute, Statistics and Mathematics Unit, 8th Mile, Mysore Road, Bangalore, 560059,
India}
\email{jay@isibang.ac.in, jaydeb@gmail.com}


\subjclass[2010]{47B37, 47B49, 15B48, 46B99, 65F55, 46A32, 65Z99, 47A05}

\keywords{Weighted shifts, left-inverses, tridiagonal kernels, reproducing kernels, shifts, multiplication operators, quasinormal operators,  polar decompositions, Aluthge transforms}

\begin{abstract}
Given scalars $a_n (\neq 0)$ and $b_n$, $n \geq 0$, the tridiagonal kernel or band kernel with bandwidth $1$ is the positive definite kernel $k$ on the open unit disc $\mathbb{D}$ defined by
\[
k(z, w) = \sum_{n=0}^\infty \Big((a_n + b_n z)z^n\Big) \Big((\bar{a}_n + \bar{b}_n \bar{w}) \bar{w}^n \Big) \qquad (z, w \in \mathbb{D}).
\]
This defines a reproducing kernel Hilbert space $\mathcal{H}_k$ (known as tridiagonal space) of analytic functions on $\mathbb{D}$ with $\{(a_n + b_nz) z^n\}_{n=0}^\infty$ as an orthonormal basis. We consider shift operators $M_z$ on $\mathcal{H}_k$ and prove that $M_z$ is left-invertible if and only if $\{|{a_n}/{a_{n+1}}|\}_{n\geq 0}$ is bounded away from zero. We find that, unlike the case of weighted shifts, Shimorin's models for left-invertible operators fail to bring to the foreground the tridiagonal structure of shifts. In fact, the tridiagonal structure of a kernel $k$, as above, is preserved under Shimorin model if and only if $b_0=0$ or that $M_z$ is a weighted shift. We prove concrete classification results concerning invariance of tridiagonality of kernels, Shimorin models, and positive operators.

We also develop a computational approach to Aluthge transforms of shifts. Curiously, in contrast to direct kernel space techniques, often Shimorin models fails to yield tridiagonal Aluthge transforms of shifts defined on tridiagonal spaces.
\end{abstract}

\maketitle

\tableofcontents

\newsection{Introduction}

The theory of left-invertible weighted shifts or multiplication operators $M_z$ on ``diagonal'' reproducing kernel Hilbert spaces is one of the most useful in operator theory, function theory, and operator algebras (see the classic by Shields \cite{Shields}). Given a bounded sequence of positive real numbers $\alpha = \{\alpha_n\}_{n \geq 0}$, and an orthonormal basis $\{e_n\}_{n \geq 0}$ of an infinite-dimensional Hilbert space $\clh$ (complex separable), the operator $S_{\alpha}$ defined by
\begin{equation}\label{eqn: sec 1 S alpha}
S_{\alpha} e_n = \alpha_n e_{n+1} \quad \quad (n \geq 0),
\end{equation}
is called a \textit{weighted shift} with weights $\{\alpha_n\}_{n \geq 0}$. In this case, $S_{\alpha}$ is bounded ($S_{\alpha} \in \clb(\clh)$ in short) and $\|S_{\alpha}\| = \sup_n \alpha_{n}$. If the sequence $\{\alpha_n\}_{n\geq 0}$ is bounded away from zero, then $S_{\alpha}$ is a left-invertible but non-invertible operator. Note that the multiplication operator $M_z$ on (most of the) diagonal reproducing kernel Hilbert spaces is the function theoretic counterpart of left-invertible weighted shifts which includes the Dirichlet shift, the Hardy shift, and the weighted and unweighted Bergman shifts, etc.

The main focus of this article is to study shifts on the ``next best'' concrete analytic kernels, namely, tridiagonal kernels. This notion was introduced by Adams and McGuire  \cite{Adam 2001} in 2001 (also see the motivating paper by Adams, McGuire and Paulsen \cite{Paulsen 92}). However, in spite of its natural appearance and potential applications, far less attention has been paid to the use of tridiagonal kernels in the aforementioned subjects. On the other hand, Shimorin \cite{SS} developed the idea of analytic models of left-invertible operators at about the same time as Adams and McGuire, which has been put forth as a key model for left-invertible operators by a number of researchers \cite{CCP, GSP, Stochel, Pawel}.

In the present paper we consider the next level of shifts on tridiagonal spaces, namely left-invertible shifts on tridiagonal spaces. We also discuss the pending and inevitable comparisons between Shimorin's analytic models of left-invertible operators and Adams and McGuire's theory of left-invertible shifts on tridiagonal spaces. In particular (and curiously enough), we find that, unlike the case of weighted shifts, Shimorin models fail to bring to the foreground the tridiagonal structure of shifts. We resolve this dilemma by presenting a complete classification of tridiagonal kernels that are preserved under Shimorin models.

We also prove a number of results concerning left-invertible properties of shifts on tridiagonal spaces, new tridiagonal spaces from the old, classifications of quasinormal operators, rank-one perturbations of left inverses, a computational approach to Aluthge transforms of shifts, etc. Again, curiously enough, some of our definite computations in the setting of tridiagonal kernels verify that the direct reproducing kernel Hilbert space technique is somewhat more powerful than Shimorin models. We also provide a family of instructive examples and supporting counterexamples.

To demonstrate the main contribution of this paper, it is now necessary to disambiguate central concepts. Needless to say, the theory of reproducing kernel Hilbert spaces will play a central role in this paper. Briefly stated, the essential idea of reproducing kernel Hilbert space \cite{Aronszajn 1050} is to single out the role of positive definiteness of inner products, multipliers and bounded point evaluations of function Hilbert spaces. We denote by $\D = \{z \in \mathbb{C}: |z| < 1\}$ the open unit disc in $\mathbb{C}$. Let $\cle$ be a Hilbert space. A function $k : \D \times \D \raro \clb(\cle)$ is called an \textit{analytic kernel} if $k$ is positive definite, that is,
\[
\sum_{i,j=1}^n \langle k(z_i, z_j) \eta_j, \eta_i \rangle_{\cle} \geq 0,
\]
for all $\{z_i\}_{i=1}^n \subseteq \D$, $\{\eta_i\}_{i=1}^n \subseteq \cle$ and $n \in \mathbb{N}$, and $k$ analytic in the first variable. In this case there exists a Hilbert space $\clh_k$, which we call \textit{analytic reproducing kernel Hilbert space} (\textit{analytic Hilbert space}, in short), of $\cle$-valued analytic functions on $\D$ such that $\{k(\cdot, w) \eta : w \in \D, \eta \in \cle\}$ is a total set in $\clh_k$ with the \textit{reproducing} property $\langle f, k(\cdot, w) \eta \rangle_{\clh_k} = \langle f(w), \eta \rangle_{\cle}$ for all $f \in \clh_k$, $w \in \D$, and $\eta \in \cle$. The \textit{shift operator} on $\clh_k$ is the multiplication operator $M_z$ (which will be assumed to be bounded) defined by \
\[
(M_z f)(w) = w f(w) \qquad (f \in \clh_k, w \in \D).
\]
Note that there exist $C_{mn} \in \clb(\cle)$ such that $k(z, w) = \sum_{m,n=0}^\infty C_{mn} z^m \bar{w}^n$, $z, w \in \D$. We say that $\clh_k$ is a diagonal reproducing kernel Hilbert space (and $k$ is a \textit{diagonal kernel}) if $C_{mn} = 0$ for all $|m-n| \geq 1$. We say that $k$ is a \textit{tridiagonal kernel} (or \textit{band kernel with bandwidth $1$}) if
\begin{equation}\label{eqn: def of TD C mn=0}
C_{mn} = 0 \quad \quad (|m-n| \geq 2).
\end{equation}
In this case, we say that $\clh_k$ is a \textit{tridiagonal space}. Now let $\{a_n\}_{n\geq 0}$ and $\{b_n\}_{n\geq 0}$ be a sequences of scalars. \textsf{In this paper, we will always assume that $a_n \neq 0$, for all $n \geq 0$.} Set
\[
f_n(z) = (a_n + b_n z) z^n \quad \quad (n \geq 0).
\]
Assume that $\{f_n\}_{n\geq 0}$ is an orthonormal basis of an analytic Hilbert space $\clh_k$. Then $\clh_k$ is a tridiagonal space, as
the well known fact from the reproducing kernel theory implies that
\begin{equation}\label{eqn intro: k(z,w)}
k(z, w) = \sum_{n=0}^{\infty} f_n(z) \overline{f_n(w)} \quad \quad (z, w \in \D).
\end{equation}

We now turn to Shimorin's analytic model of left-invertible operators \cite{SS}, which says that if $T \in \clb(\clh)$ is left-invertible and analytic (that is, $\cap_{n=0}^\infty T^n \clh = \{0\}$), then there exists an analytic Hilbert space $\clh_k (\subseteq \clo(\D, \clw))$ such that $T$ and $M_z$ on $\clh_k$ are unitarily equivalent, where $\clw = \ker T^* = \clh \ominus T \clh$ is the \textit{wandering subspace} of $T$, and $\clo(\D, \clw)$ is the set of $\clw$-valued analytic functions on $\D$. The \textit{Shimorin kernel} $k$ is explicit (see \eqref{eqn: k_T(z,w)}) which involves the \textit{Shimorin left inverse}
\begin{equation}\label{eqn: shim left inv}
L_T = (T^* T)^{-1} T^*,
\end{equation}
of $T$. The representation of the Shimorin kernel is useful in studying wandering subspaces of invariant subspaces of weighted shifts \cite{SS 2003, SS}. See \cite[Chapter 6]{Haakan} and \cite{Richter} in the context of the wandering subspace problem, and \cite{Pawel} and the extensive list of references therein for recent developments and implementations of Shimorin models.

We prove the following set of results: In Section \ref{sect:TD spaces}, we present basic properties and constructions of tridiagonal spaces and Shimorin models. We introduce the core concept of this paper: An \textit{analytic tridiagonal kernel} is a scalar kernel $k$ as in \eqref{eqn intro: k(z,w)} such that $\mathbb{C}[z] \subseteq \clh_k$, and
\[
\sup_{n\geq 0} \Big| \frac{a_n}{a_{n+1}}\Big| < \infty \quad \mbox{and} \quad \limsup_{n\geq 0} \Big| \frac{b_n}{a_{n+1}}\Big| < 1,
\]
(which ensures that $M_z$ on $\clh_k$ is bounded) and $\{| \frac{a_n}{a_{n+1}}|\}_{n\geq 0}$ is bounded away from zero. An analytic Hilbert space is called \textit{analytic tridiagonal space} if the kernel function is an analytic tridiagonal kernel. In Proposition \ref{Prop:shimorin diagonal}, we prove that weighted shifts behave well under Shimorin's analytic models.

In Section \ref{sect:TD and left inv}, we prove that $\{| \frac{a_n}{a_{n+1}}|\}_{n\geq 0}$ is bounded away is equivalent to the fact that $M_z$ on $\clh_k$ is left-invertible (see  Theorems \ref{thm: L M_z = I} and \ref{thm: iff bounded away}). We compute representations of Shimorin left inverses of shifts on analytic tridiagonal spaces (see Proposition \ref{prop: L is bounded} and Theorem \ref{Prop: L Matrix}).

Section \ref{sect: S kernels and TDS} starts with Example \ref{example: TD RKHS fail}, which shows that Shimorin kernels do not necessarily preserve the tridiagonal structure of kernels. We are nevertheless able to prove in Theorem \ref{thm: shimorin classify} that it does for a kernel $k$ of the form \eqref{eqn intro: k(z,w)} if and only if $M_z$ on $\clh_k$ is a weighted shift or
\[
b_0=0.
\]

The main result of Section \ref{sect: Positive and TDK} classifies positive operators $P$ on a tridiagonal space $\clh_k$ such that $K(z, w) := \langle P k(\cdot, w), k(\cdot, z) \rangle_{\clh_k}$ defines a tridiagonal kernel on $\D$. More specifically, if
\[
P = \begin{bmatrix}
c_{00}& c_{01} & c_{02} & c_{03} & \dots
\\
\bar{c}_{01} & c_{11} & c_{12} & c_{13} & \ddots
\\
\bar{c}_{02} & \bar{c}_{12} & c_{22} & c_{23}  & \ddots
\\
\bar{c}_{03} & \bar{c}_{13} & \bar{c}_{23} & c_{33} &
\ddots
\\
\vdots & \vdots & \vdots & \ddots & \ddots
\\
\end{bmatrix},
\]
denote the matrix representation of $P$ with respect to the basis $\{(a_n + b_n z) z^n\}_{n \geq 0}$ of $\clh_k$, then the kernel $K$ is tridiagonal if and only if $c_{0n} = (-1)^{n-1} \frac{\bar{b}_1 \cdots \bar{b}_{n-1}}{\bar{a}_2 \cdots \bar{a}_n} c_{01}$, $n \geq 2$, and $c_{mn} = (-1)^{n-m-1} \frac{\bar{b}_{m+1} \cdots \bar{b}_{n-1}}{\bar{a}_{m+2} \cdots \bar{a}_n} c_{m, m+1}$ for all $1 \leq m \leq n-2$ (see Theorem \ref{thm: P kernel and Mz}).

Section \ref{sect: quasi normal} deals with quasinormal shifts. Suppose $M_z$ is non-normal on an analytic tridiagonal space $\clh_k$. Denote by $P_{\mathbb{C} f_0}$ the orthogonal projection of $\clh_k$ onto $\mathbb{C} f_0$. In Theorem \ref{thm:quasinormal}, we prove that $M_z$ is quasinormal if and only if there exists $r>0$ such that
\[
M_z^* M_z - M_z M_z^* = r P_{\mathbb{C} f_0}.
\]

In Section \ref{sect: AT of Mz}, we compute Aluthge transforms of shifts. The notion of Aluthge transforms was introduced by Aluthge \cite{Aluthge} in his study of $p$-hyponormal operators. Let $\clh$ be a Hilbert space, $T \in \clb(\clh)$, and let $T = U|T|$ be the polar decomposition of $T$. Here, and throughout this note, $|T| = (T^* T)^{\frac{1}{2}}$ and $U$ is the unique partial isometry such that $\ker U = \ker T$. The \textit{Aluthge transform} of $T$ is the bounded linear operator
\[
\tT = |T|^{\frac{1}{2}} U |T|^{\frac{1}{2}}.
\]
The Aluthge transform of $\tT$ turns $T$ into a more ``normal'' operator while keeping intact the basic spectral properties of $T$ \cite{Pearcy-2000}. Evidently, the main difficulty associated with $\tT$ is to compute or represent the positive part $|T|$. This is certainly not true for weighted shifts: Since $|S_{\alpha}| = \mbox{diag} (\alpha_0, \alpha_1, \alpha_2, \ldots )$ (cf. Proposition \ref{Prop:shimorin diagonal}), it follows that $\tilde{S_{\alpha}} = S_{\sqrt{\alpha}}$, where
\[
\sqrt{\alpha} := \{\sqrt{\alpha_0 \alpha_1}, \sqrt{\alpha_1 \alpha_2}, \ldots\}.
\]
Therefore, $\tilde{S}_{\alpha}$ is also a weighted shift, namely $S_{\sqrt{\alpha}}$. Here we consider the next natural step: computation of $\tilde{M}_z$, where $M_z$ is a left-invertible shift on some analytic Hilbert space $\clh_k$. We prove that $\tilde{M}_z$ is also a left-invertible shift on some analytic Hilbert space $\clh_{\tilde{k}}$. The kernel $\tilde{k}$ can be obtained either via Shimorin's model (see Theorem \ref{thm: left analytic model}), which we call the \textit{Shimorin-Aluthge kernel of $M_z$}, or by a direct approach (see  Theorem \ref{thm: revisit model}), which we call the \textit{standard Aluthge kernel of $M_z$}. In Theorem \ref{thm: rank one perturb}, we prove that if $\mathbb{C}[z] \subseteq \clh_k \subseteq \clo(\D)$, then $L_{M_z}$ and $L_{\tilde{M}_z}$ are similar up to the perturbation of an operator of rank at most one. Moreover, in this setting Shimorin-Aluthge kernels are somewhat more explicit (see Theorem \ref{thm: left analytic model version 2}).

In Section \ref{sec: truncated} we consider truncated spaces (subclass of analytic tridiagonal spaces) in order to pinpoint more definite results, instructive examples, and counterexamples. A truncated space of order $r (\geq 2)$ is an analytic tridiagonal space $\clh_k$ with $k$ as in \eqref{eqn intro: k(z,w)} such that
\[
b_n = 0 \quad \quad (n \neq 2, 3, \ldots, r).
\]
The computational advantage of a truncated space is that it annihilate a rank one operator (see \eqref{eqn:F = rank one 2}) associated with $L_{M_z}$ of the shift $M_z$. As a result, in this case we are able to prove a complete classification of tridiagonal Shimorin-Aluthge kernels of shifts. This is the content of Theorem \ref{thm: truncated}. Curiously, the classification criterion of Theorem \ref{thm: truncated} is also the classification criterion of tridiagonality of standard Aluthge kernels (see Corollary \ref{cor:truncated SK = SK}).

In Section \ref{sect: final}, we comment on the assumptions in the definition of truncated kernels. We point out, at the other extreme, if one consider a (non-truncated) tridiagonal kernel $k$ with
\[
b_0 = b_1 = 1 \mbox{~or~} b_0 =1,
\]
and all other $b_i$'s are equal to $0$, then the standard Aluthge kernel of $M_z$ is a tridiagonal but the Shimorin-Aluthge kernel of $M_z$ is not. This is the main content of Example \ref{example: truncated SK not SK}. We conclude the paper by two observations concerning tridiagonal structures of standard Aluthge kernels and kernels of the form $(z, w) \mapsto \langle |M_z|^{-2} k(\cdot, w), k(\cdot, z) \rangle$.

We remark that some of the observations outlined in Sections \ref{sect: AT of Mz} and \ref{sec: truncated} are based on several more general results that have an independent interest in broader operator theory and function theoretic contexts.

\section{Preparatory results and examples}\label{sect:TD spaces}

In this section, we set up some definitions, collect some known facts about tridiagonal reproducing kernel Hilbert spaces and Shimorin analytic models, and observe some auxiliary results which are needed throughout the paper. We also explain the idea of Shimorin with the example of diagonal kernels (or equivalently, weighted shifts).

We start with tridiagonal spaces. Here we avoid finer technicalities \cite{Adam 2001} and introduce only the necessary features of tridiagonal spaces. Let $\cle$ be a Hilbert space, $k$ be a $\clb(\cle)$-valued analytic kernel on $\D$, and let $\clh_k \subseteq \clo(\D, \cle)$ be the corresponding reproducing kernel Hilbert space. Then there exists a sequence $\{C_{mn}\}_{m,n \geq 0} \subseteq \clb(\cle)$ such that
\[
k(z, w) = \sum_{m,n=0}^\infty C_{mn} z^m \bar{w}^n \quad \quad (z, w \in \D).
\]
Recall that (see \eqref{eqn: def of TD C mn=0}) $k$ is a \textit{tridiagonal kernel} if $C_{mn} = 0$, $|m-n| \geq 2$. We say that $\clh_k$ is a \textit{tridiagonal space} if $k$ is tridiagonal. We now single out two natural tridiagonal spaces.

\begin{Definition}
A tridiagonal space $\clh_k$  is called semi-analytic tridiagonal space if $\mathbb{C}[z] \subseteq \clh_k \subseteq \clo(\D)$, and there exist scalars $\{a_n\}_{n\geq 0}$ and $\{b_n\}_{n\geq 0}$, $a_n \neq 0$ for all $n \geq 0$, such that
\begin{equation}\label{eqn: sup of an bn}
\sup_{n\geq 0} \Big| \frac{a_n}{a_{n+1}}\Big| < \infty \quad \mbox{and} \quad \limsup_{n\geq 0} \Big| \frac{b_n}{a_{n+1}}\Big| < 1,
\end{equation}
and $\{f_n\}_{n\geq 0}$ is an orthonormal basis of $\clh_k$, where
\begin{equation}\label{eqn: f_n(z) sec 3}
f_n(z) = (a_n + b_n z)z^n \quad \quad (n \geq 0).
\end{equation}
\end{Definition}

Note that the conditions in \eqref{eqn: sup of an bn} ensure that the shift $M_z$ is a bounded linear operator on $\clh_k$ \cite[Theorem 5]{Adam 2001}. We refer the reader to \cite[Theorem 2]{Adam 2001} on the containment of polynomials.

\begin{Definition}\label{def: 2nd}
A semi-analytic tridiagonal space $\clh_k$ is said to be analytic tridiagonal space if the sequence $\{| \frac{a_n}{a_{n+1}}|\}_{n\geq 0}$ is bounded away from zero, that is, there exists $\epsilon>0$ such that
\begin{equation}\label{eqn: bounded away an/bn}
\Big| \frac{a_n}{a_{n+1}}\Big| > \epsilon \quad \quad (n \geq 0).
\end{equation}
\end{Definition}
A scalar kernel $k$ is called \textit{semi-analytic (analytic) tridiagonal kernel} if the corresponding reproducing kernel Hilbert space $\clh_k$ is a semi-analytic (an analytic) tridiagonal space.

It is important to note that \eqref{eqn: bounded away an/bn} is essential for left invertibility of $M_z$. As we will see in Theorem \ref{thm: iff bounded away}, if $\clh_k (\supseteq \mathbb{C}[z])$ is a tridiagonal space corresponding to the orthonormal basis $\{f_n\}_{n\geq 0}$ as in \eqref{eqn: f_n(z) sec 3}, and if $\{a_n\}_{n\geq 0}$ and $\{b_n\}_{n\geq 0}$ satisfies the conditions in \eqref{eqn: sup of an bn}, then condition \eqref{eqn: bounded away an/bn} is equivalent to the left invertibility of  $M_z$ on $\clh_k$. Also recall that the weighted shift $S_{\alpha}$ with weights $\{\alpha_n\}_{n \geq 0}$ (see \eqref{eqn: sec 1 S alpha}) is bounded if and only if $\sup_{n\geq0} \alpha_n < \infty$. In this case, $S_{\alpha}$ is left-invertible if and only if $\{\alpha_n\}_{n\geq 0}$ is bounded away from zero (cf. Proposition \ref{Prop:shimorin diagonal}). By translating this into the setting of analytic Hilbert spaces \cite[Proposition 7]{Shields}, it is clear that the conditions in Definition \ref{def: 2nd} are natural. For instance, if $b_n = 0$, $n \geq 0$, then \eqref{eqn: bounded away an/bn} is a necessary and sufficient condition for left invertibility of shifts on diagonal kernels.

Suppose $k$ is a semi-analytic tridiagonal kernel. Note that $k(z, w) = \sum_{n=0}^{\infty} f_n(z) \overline{f_n(w)}$ (see \eqref{eqn intro: k(z,w)}). Now fix $n \geq 0$, and write $z^n = \sum_{m=0}^\infty \alpha_m f_m$ for some $\alpha_m \in \mathbb{C}$, $m\geq 0$. Then
\[
z^n = \alpha_0 a_0 + \sum_{m=1}^{\infty} (\alpha_{m-1} b_{m-1} + \alpha_m a_m) z^m.
\]
Thus comparing coefficients, we have $\alpha_0 = \alpha_1 = \cdots = \alpha_{n-1} = 0$, and $\alpha_n = \frac{1}{a_n}$, as $a_i$'s are non-zero scalars. Since $\alpha_{n+j-1} b_{n+j-1} + \alpha_{n+j} a_{n+j} = 0$, it follows that $\alpha_{n+j} = - \frac{\alpha_{n+j-1} b_{n+j-1}}{a_{n+j}}$, and thus $\alpha_{n+j} = \frac{(-1)^j}{a_n} \frac{ b_n b_{n+1} \cdots b_{n+j-1}}{a_{n+1} \cdots a_{n+j}}$ for all $j \geq 1$. This implies
\begin{equation}\label{eq: z^n formula}
z^n = \frac{1}{a_n} \sum_{m=0}^\infty (-1)^m \Big(\frac{\prod_{j=0}^{m-1} b_{n+j}}{\prod_{j=0}^{m-1} a_{n+j+1}}\Big) f_{n+m} \quad \quad (n \geq 0),
\end{equation}
where $\prod_{j=0}^{-1} x_{n+j} := 1$. With this, we now proceed to compute $M_z$ \cite[Section 3]{Adam 2001}. Let $n \geq 0$. Then $M_z f_n = a_n z^{n+1} + b_n z^{n+2}$ implies that
\[
M_z f_n = \frac{a_n}{a_{n+1}} f_{n+1} + (b_n - \frac{a_n b_{n+1}}{a_{n+1}}) z^{n+2} = \frac{a_n}{a_{n+1}} f_{n+1} + a_{n+2} (\frac{b_n}{a_{n+2}} - \frac{a_n}{a_{n+1}} \frac{b_{n+1}}{a_{n+2}}) z^{n+2},
\]
that is
\begin{equation}\label{eq: M_z f_n formula}
M_z f_n  = \frac{a_n}{a_{n+1}} f_{n+1} + a_{n+2} c_n z^{n+2},
\end{equation}
where
\begin{equation}\label{eq: c_n b_n a_n}
c_n = \frac{a_n}{a_{n+2}} \Big(\frac{b_n}{a_{n}} - \frac{b_{n+1}}{a_{n+1}}\Big) \quad \quad (n \geq 0).
\end{equation}
Then \eqref{eq: z^n formula} implies that
\begin{equation}\label{eq: M_z}
M_z f_n  = \Big(\frac{a_n}{a_{n+1}}\Big) f_{n+1} + c_n \sum_{m=0}^\infty (-1)^m \Big(\frac{\prod_{j=0}^{m-1} b_{n+2+j}}{\prod_{j=1}^{m-1} a_{n+3+j}}\Big) f_{n+2+m} \quad \quad (n\geq 0),
\end{equation}
and hence, with respect to the orthonormal basis $\{f_n\}_{n \geq 0}$, we have (also see \cite[Page 729]{Adam 2001})
\begin{equation}\label{eqn: Mz matrix}
[M_z] = \begin{bmatrix}
0& 0 & 0 & 0 & \dots
\\
\frac{a_0}{a_1} & 0 & 0 & 0 & \ddots
\\
{c_0} & \frac{a_1}{a_2} & 0 & 0 & \ddots
\\
\frac{-c_0 b_2}{a_3} & c_1 & \frac{a_2}{a_3} & 0 & \ddots
\\
\frac{c_0b_2b_3}{a_3a_4} &\frac{-c_1b_3}{a_4} & c_2 & \frac{a_3}{a_4} & \ddots
\\
\frac{-c_0b_2b_3b_4}{a_3a_4a_5} &\frac{c_1b_3b_4}{a_4a_5} & \frac{-c_2b_4}{a_5} & c_3 & \ddots
\\
\vdots & \vdots & \vdots&\ddots &\ddots
\end{bmatrix}.
\end{equation}
The matrix representation of the conjugate of $M_z$ is going to be useful in what follows:

\begin{equation}\label{eqn: Mz^* matrix}
[M^*_z] = \begin{bmatrix}
0& \frac{\bar{a}_0}{\bar{a}_1} & \bar{c}_0 & \frac{-\bar{c}_0 \bar{b}_2}{\bar{a}_3} & \frac{-\bar{c}_0 \bar{b}_2 \bar{b}_3}{\bar{a}_3 \bar{a}_4}  & \dots
\\
0& 0& \frac{\bar{a}_1}{\bar{a}_2} & \bar{c}_1 & \frac{-\bar{c}_1 \bar{b}_3}{\bar{a}_4} & \ddots
\\
0 & 0 & 0 & \frac{\bar{a}_2}{\bar{a}_3} & \bar{c}_2 & \ddots
\\
0& 0 & 0 & 0 & \frac{\bar{a}_3}{\bar{a}_4} & \ddots
\\
\vdots & \vdots & \vdots&\vdots &\ddots &\ddots
\end{bmatrix}.
\end{equation}
In particular, $M_z$ is a weighted shift if and only if $c_n = 0$ for all $n \geq 0$. Also, by \eqref{eq: c_n b_n a_n}, we have $c_n=0$ if and only if $\frac{b_{n+1}}{a_{n+1}} = \frac{b_n}{a_n}$, $n \geq 0$.
Therefore, we have the following observation:

\begin{Lemma}\label{lemma: M_z weighted shift}
The shift $M_z$ on a semi-analytic tridiagonal space $\clh_k$ is a weighted shift if and only if $c_n = 0$ for all $n \geq 0$, or, equivalently, $\{\frac{b_n}{a_n}\}_{n \geq 0}$ is a constant sequence.
\end{Lemma}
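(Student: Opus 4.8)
The plan is to read the lemma directly off the action formula \eqref{eq: M_z f_n formula}, so that essentially all the work has already been done by the time we reach the statement. Here ``$M_z$ is a weighted shift'' should be understood with respect to the distinguished orthonormal basis $\{f_n\}_{n\geq 0}$, i.e. $M_z f_n \in \mathbb{C} f_{n+1}$ for every $n$ (equivalently, after a diagonal unimodular adjustment, $M_z$ is unitarily equivalent to the weighted shift $S_\alpha$ with $\alpha_n = |a_n/a_{n+1}|$). Thus the whole first equivalence reduces to deciding when the second summand $a_{n+2} c_n z^{n+2}$ in \eqref{eq: M_z f_n formula} vanishes.

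First I would record, from \eqref{eq: z^n formula} applied with $n$ replaced by $n+2$, that the expansion of $z^{n+2}$ in the basis $\{f_m\}$ is supported on $\{f_m : m \geq n+2\}$, with first nonzero coefficient $\tfrac{1}{a_{n+2}}$ on $f_{n+2}$. In particular $z^{n+2}$ is a nonzero vector orthogonal to $f_{n+1}$. Feeding this into $M_z f_n = \tfrac{a_n}{a_{n+1}} f_{n+1} + a_{n+2} c_n z^{n+2}$, we get $M_z f_n \in \mathbb{C} f_{n+1}$ if and only if $a_{n+2} c_n z^{n+2} = 0$; since $a_{n+2} \neq 0$ and $z^{n+2} \neq 0$, this holds exactly when $c_n = 0$. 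Ranging over all $n$ yields the first equivalence, and it also re-confirms that the second-subdiagonal entries of \eqref{eqn: Mz matrix} are precisely the $c_n$.

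For the second equivalence I would simply invoke the closed form \eqref{eq: c_n b_n a_n}, namely $c_n = \tfrac{a_n}{a_{n+2}}\big(\tfrac{b_n}{a_n} - \tfrac{b_{n+1}}{a_{n+1}}\big)$. As $a_n, a_{n+2} \neq 0$, the prefactor $a_n/a_{n+2}$ is nonzero, so $c_n = 0$ if and only if $\tfrac{b_n}{a_n} = \tfrac{b_{n+1}}{a_{n+1}}$; and this holds for every $n \geq 0$ precisely when $\{b_n/a_n\}_{n\geq 0}$ is a constant sequence. This closes the chain of equivalences.

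The only genuine point to pin down — the ``hard part,'' such as it is — is the reading of ``weighted shift,'' so I would make the reference basis explicit. If one instead allows an arbitrary orthonormal basis, the conclusion is unchanged: from \eqref{eqn: Mz matrix} the first row vanishes, i.e. $\langle f_n, M_z^* f_0\rangle = 0$ for all $n$, whence $M_z^* f_0 = 0$ and the wandering subspace $\ker M_z^* = \mathbb{C} f_0$ is one-dimensional. For an injective shift this wandering vector together with the orthogonality of $\{M_z^n f_0\}_{n\geq 0}$ determines the weighted-shift basis up to unimodular scalars, so any such basis must coincide with $\{f_n\}$ up to phase and the computation above applies verbatim. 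I would keep the main argument at the level of the two displayed equivalences and relegate this observation to a remark.
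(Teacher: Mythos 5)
Your two displayed equivalences are correct and are essentially the paper's own argument: the paper derives \eqref{eq: M_z f_n formula} and the matrix representation \eqref{eqn: Mz matrix}, and then records the lemma as an immediate observation, reading ``weighted shift'' with respect to the distinguished basis $\{f_n\}_{n \geq 0}$ exactly as you do. Your use of \eqref{eq: z^n formula} to see that $z^{n+2}$ is a nonzero vector supported on $\{f_m : m \geq n+2\}$, hence orthogonal to $f_{n+1}$, together with the factorization $c_n = \frac{a_n}{a_{n+2}}\big(\frac{b_n}{a_n} - \frac{b_{n+1}}{a_{n+1}}\big)$ and $a_n/a_{n+2} \neq 0$, settles both equivalences; this is the intended proof.

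The closing remark, however, contains a genuine gap. A minor point first: $M_z^* f_0 = 0$ yields only $\mathbb{C} f_0 \subseteq \ker M_z^*$; the equality $\ker M_z^* = \mathbb{C} f_0$ is Lemma \ref{lemma: ker Mz*}, which genuinely uses $\mathbb{C}[z] \subseteq \clh_k$ to get $\mbox{span}\{f_n : n \geq 1\} \subseteq \mbox{ran}\, M_z$, so you should cite it rather than infer equality from the vanishing first row. More seriously, the step ``any such basis must coincide with $\{f_n\}$ up to phase'' is circular. If $M_z g_n = \alpha_n g_{n+1}$ for some orthonormal basis $\{g_n\}$, then indeed $g_0 \in \ker M_z^* = \mathbb{C} f_0$ and each $g_n$ is a unimodular multiple of $M_z^n f_0 / \|M_z^n f_0\|$. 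But $M_z^n f_0 = M_z^n (a_0 + b_0 z) = a_0 z^n + b_0 z^{n+1}$, and this is proportional to $f_n = a_n z^n + b_n z^{n+1}$ if and only if $\frac{b_n}{a_n} = \frac{b_0}{a_0}$ --- which is precisely the conclusion under proof. So the remark establishes nothing in the direction ``weighted shift with respect to some basis $\Rightarrow$ $c_n = 0$ for all $n$''; a basis-free version would instead require showing that orthogonality and completeness of the system $\{a_0 z^n + b_0 z^{n+1}\}_{n \geq 0}$ force $\{b_n/a_n\}$ to be constant, which you have not done and which does not follow from uniqueness of the wandering vector alone. Since the paper reads the lemma with respect to $\{f_n\}$ and only ever applies it in that form (e.g., Step 5 of the proof of Theorem \ref{thm: shimorin classify}), your main argument stands as written; delete the remark or flag the basis-free claim as unproved.
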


The proof of the following lemma uses the assumption that $\mathbb{C}[z] \subseteq \clh_k$.

\begin{Lemma}\label{lemma: ker Mz*}
If $\clh_k$ is a semi-analytic tridiagonal space, then $\ker M_z^* = \mathbb{C} f_0$.
\end{Lemma}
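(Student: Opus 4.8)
The plan is to realize $\ker M_z^*$ as the wandering subspace $\clh_k \ominus \overline{M_z \clh_k}$ and to show that this orthogonal complement is at most one dimensional and contains the nonzero vector $f_0$. First I would record two structural facts about the closed subspaces involved, both of which hinge on the standing hypothesis $\mathbb{C}[z] \subseteq \clh_k$. Since each basis vector $f_n = a_n z^n + b_n z^{n+1}$ is a polynomial, the inclusion $\mbox{span}\{f_n : n \geq 0\} \subseteq \mathbb{C}[z]$ gives $\clh_k = \overline{\mbox{span}}\{f_n\} \subseteq \overline{\mathbb{C}[z]}$, while the hypothesis $\mathbb{C}[z] \subseteq \clh_k$ gives the reverse inclusion; hence $\clh_k = \overline{\mathbb{C}[z]} = \overline{\mbox{span}}\{z^m : m \geq 0\}$. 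Next, writing $\clm := \overline{M_z \clh_k}$ and using that $M_z$ is bounded with $\{f_n\}$ total, I have $\clm = \overline{\mbox{span}}\{M_z f_n : n \geq 0\}$. Since $M_z f_n = a_n z^{n+1} + b_n z^{n+2}$ lies in $\mbox{span}\{z^m : m \geq 1\}$, this yields $\clm \subseteq \overline{\mbox{span}}\{z^m : m \geq 1\}$; conversely, for every $m \geq 1$ the polynomial $z^{m-1}$ belongs to $\clh_k$, so $z^m = M_z z^{m-1} \in M_z \clh_k$, and therefore $\clm = \overline{\mbox{span}}\{z^m : m \geq 1\}$.

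With these identifications the codimension count is immediate. As vector spaces $\mathbb{C}[z] = \mathbb{C}\cdot 1 + z\mathbb{C}[z]$, so taking closures gives $\clh_k = \overline{\mathbb{C}\cdot 1 + \clm}$, where $1$ denotes the constant function. Let $Q = P_{\clm^\perp}$ be the orthogonal projection onto $\clm^\perp = \ker M_z^*$. Given any $w \in \ker M_z^*$, write $w = \lim_k(\lambda_k\cdot 1 + m_k)$ with $\lambda_k \in \mathbb{C}$ and $m_k \in \clm$; applying the contraction $Q$ and using $Qw = w$ together with $Q m_k = 0$ gives $w = \lim_k \lambda_k Q1 \in \mathbb{C} Q1$, so $\dim \ker M_z^* \leq 1$. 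To see the space is nontrivial and equals $\mathbb{C} f_0$, I would read off from \eqref{eq: M_z} that every $M_z f_n$ is a combination of $f_{n+1}, f_{n+2}, \ldots$ and hence has vanishing $f_0$-component; thus $\langle M_z g, f_0 \rangle = 0$ for all $g$ in the dense span of $\{f_n\}$, i.e. $M_z^* f_0 = 0$. Since $a_0 \neq 0$ forces $f_0 \neq 0$, the one dimensional space $\ker M_z^*$ must be exactly $\mathbb{C} f_0$.

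The two closure identities are routine once boundedness of $M_z$ and totality of $\{f_n\}$ are invoked, and the verification $M_z^* f_0 = 0$ is a one-line consequence of \eqref{eq: M_z}. The step that genuinely carries the hypothesis $\mathbb{C}[z] \subseteq \clh_k$ is the inclusion $\overline{\mbox{span}}\{z^m : m \geq 1\} \subseteq \clm$: without knowing that the monomials already lie in $\clh_k$ one cannot assert $z^m \in \mbox{ran}\, M_z$, and the codimension-one conclusion could fail. I therefore expect this inclusion, together with the projection argument bounding $\dim \ker M_z^* \leq 1$, to be the crux of the proof.
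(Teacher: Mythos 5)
Your proof is correct, but it takes a longer route than the paper's. Both arguments rest on the same two pillars: the expansion \eqref{eq: M_z} shows $f_0 \perp M_z f_n$ for every $n$, hence $f_0 \in \ker M_z^*$ (the paper reads this off from \eqref{eqn: Mz^* matrix}, which is the same thing); and the hypothesis $\mathbb{C}[z] \subseteq \clh_k$ supplies explicit preimages under $M_z$, so that $\ker M_z^* = (\mbox{ran}\, M_z)^\perp$ is small. The difference is in \emph{which} preimages are chosen. You pull back monomials, $z^m = M_z z^{m-1}$, and identify $\overline{M_z \clh_k} = \overline{\mbox{span}}\{z^m : m \geq 1\}$; since the monomials are not orthogonal in $\clh_k$, you then need the codimension-one detour — the decomposition $\clh_k = \overline{\mathbb{C}\cdot 1 + \clm}$ and the projection $Q$ onto $\clm^\perp$ — to bound $\dim \ker M_z^* \leq 1$. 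The paper instead pulls back the basis vectors themselves: for $n \geq 1$, $f_n = M_z(a_n z^{n-1} + b_n z^n) \in \mbox{ran}\, M_z$, whence $\ker M_z^* \subseteq (\mbox{span}\{f_n : n \geq 1\})^\perp = \mathbb{C} f_0$ immediately, by orthonormality of $\{f_n\}$, with no projection argument at all. Your version does yield the slightly more structural byproduct $\overline{\mbox{ran}\, M_z} = \overline{\mbox{span}}\{z^m : m \geq 1\}$, but for the lemma itself the paper's choice of preimages collapses your entire second step to one line. Every step of yours is sound — in particular the continuity argument $w = \lim_k \lambda_k Q1 \in \mathbb{C} Q1$ works because $\mathbb{C} Q1$ is finite-dimensional, hence closed — so there is no gap, only an avoidable detour.
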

\begin{proof}
Clearly, \eqref{eqn: Mz^* matrix} implies that $f_0 \in \ker M_z^*$. On the other hand, from $\mathbb{C}[z] \subseteq \clh_k$ we deduce that $f_n = M_z (a_n z^{n-1} + b_n z^n) \in \mbox{ran} M_z$ for all $n \geq 1$, and hence $\mbox{span} \{f_n: n \geq 1\} \subseteq \mbox{ran} M_z$. The result now follows from the fact that $\mathbb{C} f_0 = (\mbox{span} \{f_n: n \geq 1\})^\perp \supseteq \ker M_z^*$.
\end{proof}

Now we briefly describe the construction of Shimorin's analytic models of left-invertible operators. Let $\clh$ be a Hilbert space, and let $T \in \clb(\clh)$. We say that $T$ is \textit{left-invertible} if there exists $X \in \clb(\clh)$ such that $XT = I_{\clh}$. It is easy to check that this equivalently means that $T$ is bounded below, which is also equivalent to the invertibility of $T^* T$. Following Shimorin, a bounded linear operator $X \in \clb(\clh)$ is \textit{analytic} if
\begin{equation}\label{eqn: cap Xn H =0}
\bigcap_{n=0}^\infty X^n \clh = \{0\}
\end{equation}
Note that from the viewpoint of analytic Hilbert spaces, shifts are always analytic. Indeed, let $\clh_k \subseteq \clo(\Omega, \cle)$, where $\Omega \subseteq \mathbb{C}$ is a domain, and suppose the shift $M_z$ is bounded on $\clh_k$. If $f \in \bigcap_{n=0}^\infty M_z^n \clh_k$, then for each $n \geq 0$, there exists $g_n \in \clh_k$ such that $f = z^n g_n$. Since $\Omega$ is a domain and $f$ is analytic on $\Omega$, we see that $f \equiv 0$, that is, $\bigcap_{n=0}^\infty M_z^n \clh_k = \{0\}$.

Now let $T \in \clb(\clh)$ be a bounded below operator. We call $L_T := (T^* T)^{-1} T^*$ the \textit{Shimorin left inverse}, to distinguish it from other left inverses of $T$ (see \eqref{eqn: shim left inv}). Set
\[
\clw = \ker T^* = \clh \ominus T \clh,
\]
and $\Omega = \{z \in \mathbb{C}: |z| < \frac{1}{r(L_T)}\}$, where $r(L_T)$ is the spectral radius of $L_T$. Then
\begin{equation}\label{eqn: k_T(z,w)}
k_T(z, w) = P_{\clw} (I - z L_T)^{-1} (I - \bar{w} L_T^*)^{-1}|_{\clw} \quad \quad (z, w \in \Omega),
\end{equation}
defines a $\clb(\clw)$-valued analytic kernel $k_T : \Omega \times \Omega \raro \clb(\clw)$, which we call the \textit{Shimorin kernel} of $T$ (see \cite[Corollary 2.14]{SS}). We lose no generality by assuming, as we shall do, that $\Omega = \D$. If, in addition, $T$ is analytic, then the unitary $U : \clh \raro \clh_k$ defined by
\begin{equation}\label{eqn: sec 2 Unitary}
(Uf)(z) = \sum_{n=0}^\infty (P_{\clw} L_T^n f) z^n \quad \quad (f \in \clh, z \in \D),
\end{equation}
satisfies $U T = M_z U$ \cite{SS}. More precisely, we have the following result:

\begin{Theorem}\label{thm-Shimorin}
Let $T \in \clb(\clh)$ be an analytic left-invertible operator. Then $T$ on $\clh$ and $M_z$ on $\clh_{k_T}$ are unitarily equivalent.
\end{Theorem}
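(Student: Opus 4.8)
The plan is to build the unitary of \eqref{eqn: sec 2 Unitary} (in practice, its adjoint) explicitly on the total set of kernel functions, and to verify the intertwining relation $UT = M_z U$ as a separate algebraic step. Everything rests on two identities coming from left invertibility: $L_T T = I_\clh$, and $T L_T = I_\clh - P_{\clw}$, the orthogonal projection onto $\mathrm{ran}\, T$ (which is closed since $T$ is bounded below); in particular $P_{\clw} T = 0$. The first thing I would record is the Shimorin expansion obtained by iterating $f = P_{\clw} f + T L_T f$, namely
\[
f = \sum_{k=0}^{n-1} T^k P_{\clw} L_T^k f + T^n L_T^n f \qquad (f \in \clh,\ n \geq 1),
\]
since this is the only bridge to analyticity.

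Next I would establish the isometry by working with $V \bydef U^*$. Define $V$ on the total set $\{k_T(\cdot, w)\eta : w \in \D,\ \eta \in \clw\}$ by $V\big(k_T(\cdot, w)\eta\big) = (I - \bar w L_T^*)^{-1}\eta$, the series $\sum_n \bar w^n (L_T^*)^n \eta$ converging on $\D = \{|z| < 1/r(L_T)\}$. Using the reproducing property together with the explicit form \eqref{eqn: k_T(z,w)}, a direct computation gives, for $\eta, \eta' \in \clw$,
\[
\langle k_T(\cdot, w)\eta,\, k_T(\cdot, w')\eta' \rangle_{\clh_{k_T}} = \langle (I - w' L_T)^{-1}(I - \bar w L_T^*)^{-1}\eta,\, \eta' \rangle_\clh = \langle V(k_T(\cdot, w)\eta),\, V(k_T(\cdot, w')\eta') \rangle_\clh,
\]
so $V$ preserves the Gram matrix and extends to an isometry $V : \clh_{k_T} \raro \clh$.

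Then I would prove that $V$ is onto, which is the one place analyticity is genuinely used. Since $\mathrm{ran}\, V$ contains every $(I - \bar w L_T^*)^{-1}\eta$, reading off power-series coefficients shows $\overline{\mathrm{ran}\, V} \supseteq \overline{\mathrm{span}}\{(L_T^*)^n \eta : n \geq 0,\ \eta \in \clw\}$, whose orthogonal complement is exactly $\{f \in \clh : P_{\clw} L_T^n f = 0 \ \forall n\}$. For such an $f$ the expansion above collapses to $f = T^n L_T^n f \in T^n \clh$ for every $n$, whence $f \in \bigcap_{n} T^n \clh = \{0\}$ by analyticity. Thus $V$ is a surjective isometry, i.e. unitary, and $U \bydef V^*$ coincides with the map in \eqref{eqn: sec 2 Unitary}: pairing $V^* f$ against $k_T(\cdot, w)\eta$ gives $(V^* f)(w) = \sum_n (P_{\clw} L_T^n f)\, w^n$.

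Finally the intertwining: using $P_{\clw} T = 0$ for the $n=0$ term and $L_T^n T = L_T^{n-1}$ for $n \geq 1$,
\[
(UTf)(z) = \sum_{n \geq 1}(P_{\clw} L_T^{n-1} f)\, z^n = z \sum_{m \geq 0}(P_{\clw} L_T^m f)\, z^m = (M_z U f)(z),
\]
so $UT = M_z U$ and $T$ is unitarily equivalent to $M_z$ on $\clh_{k_T}$. The isometry step is routine bookkeeping with the explicit kernel; the actual content, and the only real obstacle, is the surjectivity argument, where the telescoping expansion converts the condition ``$P_{\clw} L_T^n f = 0$ for all $n$'' into membership in $\bigcap_n T^n \clh$, and analyticity closes the gap.
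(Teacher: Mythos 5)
Your proof is correct, and it is worth noting that the paper does not actually prove Theorem \ref{thm-Shimorin}: it quotes Shimorin \cite{SS}, records the map \eqref{eqn: sec 2 Unitary} and the identity $P_{\clw} = I_{\clh} - TL_T$ of \eqref{eqn:P_W = I- TL}, and remarks that the latter enters through a Wold-type decomposition. Your argument uses exactly these two ingredients but organizes them dually to Shimorin's original route. Shimorin starts on the $\clh$ side: he defines $Uf = \sum_n (P_{\clw} L_T^n f) z^n$, uses analyticity, via the same telescoping identity $f = \sum_{k=0}^{n-1} T^k P_{\clw} L_T^k f + T^n L_T^n f$, to see that $U$ is injective, transports the Hilbert space structure onto $\mathrm{ran}\, U$, and then verifies that the resulting space of $\clw$-valued analytic functions is a reproducing kernel space whose kernel is precisely $k_T$. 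You instead start from the abstract space $\clh_{k_T}$, whose existence is granted by positive definiteness of $k_T$ (which the paper takes from \cite[Corollary 2.14]{SS}), define $V = U^*$ on the total set of kernel functions, establish isometry through the Gram-matrix identity $\langle k_T(\cdot,w)\eta, k_T(\cdot,w')\eta'\rangle_{\clh_{k_T}} = \langle (I - w'L_T)^{-1}(I - \bar{w}L_T^*)^{-1}\eta, \eta'\rangle_{\clh}$, and invoke the telescoping identity plus $\bigcap_n T^n\clh = \{0\}$ only for surjectivity. What each approach buys: yours avoids having to check that the pushed-forward norm turns $\mathrm{ran}\, U$ into a reproducing kernel Hilbert space with kernel exactly $k_T$ (automatic once $k_T$ is known to be a kernel), at the price of the routine Gram computation; Shimorin's does not presuppose positive definiteness of $k_T$, obtaining it as a by-product of the construction. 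Two details in your write-up deserve the emphasis you give them: the collapse $f = T^n L_T^n f$ for $f \perp \overline{\mathrm{span}}\{L_T^{*n}\eta : n \geq 0, \eta \in \clw\}$ is an exact finite identity, so no convergence of $T^n L_T^n f$ is needed; and $M_z$ is bounded on $\clh_{k_T}$ automatically from $M_z = UTU^*$, so no separate estimate is required.
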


Denote by $P_{\clw}$ the orthogonal projection of $\clh$ onto $\clw = \ker T^*$. It follows that
\begin{equation}\label{eqn:P_W = I- TL}
P_{\clw} = I_{\clh} - TL_T,
\end{equation}
This plays an important role (in the sense of Wold decomposition of left-invertible operators) in the proof of the above theorem. The following equality will be very useful in what follows.

\begin{Lemma}\label{prop: L tilde T}
If $T$ is a left-invertible operator on $\clh$, then $L_T L_T^* = |T|^{-2}$.
\end{Lemma}
\begin{proof} This follows from the fact that $L_T L_T^* = (T^* T)^{-1} T^* T (T^* T)^{-1} = (T^* T)^{-1}$.
\end{proof}

In the case of left-invertible weighted shifts $S_{\alpha}$ (see \eqref{eqn: sec 1 S alpha}), it is perhaps known that the shift $M_z$ on $\clh_{k_{S_{\alpha}}}$ corresponding to the Shimorin kernel $k_{S_{\alpha}}$ is also a weighted shift. Nonetheless, we sketch the proof here for the sake of completeness.

\begin{Proposition}\label{Prop:shimorin diagonal}
Let $S_{\alpha}$ be the weighted shift with weights $\{\alpha_n\}_{n\geq 0}$. If $\{\alpha_n\}_{n\geq 0}$ is bounded away from zero, then $S_{\alpha}$ is left-invertible, and the Shimorin kernel $k_{S_{\alpha}}$ is diagonal.
\end{Proposition}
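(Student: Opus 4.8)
The plan is to make every object completely explicit and then read off diagonality from the one-dimensionality of the wandering subspace. Since the weights $\alpha_n$ are positive reals, I would first record that $S_{\alpha}^* S_{\alpha} = \diag(\alpha_0^2, \alpha_1^2, \dots)$ with respect to $\{e_n\}_{n \geq 0}$, so that $|S_{\alpha}| = \diag(\alpha_0, \alpha_1, \dots)$ and $S_{\alpha}^* S_{\alpha}$ is invertible precisely when $\inf_n \alpha_n > 0$. Under the bounded-away-from-zero hypothesis this makes $S_{\alpha}^* S_{\alpha}$ invertible, equivalently $S_{\alpha}$ bounded below, hence left-invertible.

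Next I would compute the Shimorin left inverse $L := L_{S_{\alpha}} = (S_{\alpha}^* S_{\alpha})^{-1} S_{\alpha}^*$ explicitly. From $S_{\alpha}^* e_0 = 0$ and $S_{\alpha}^* e_n = \alpha_{n-1} e_{n-1}$ for $n \geq 1$, one gets $L e_0 = 0$ and $L e_n = \alpha_{n-1}^{-1} e_{n-1}$; thus $L$ is a backward weighted shift, with adjoint the forward weighted shift $L^* e_n = \alpha_n^{-1} e_{n+1}$. In particular $(L^*)^k e_0 = (\alpha_0 \cdots \alpha_{k-1})^{-1} e_k$ (empty product equal to $1$), and $\|L^*\| = \|L\| = (\inf_n \alpha_n)^{-1} < \infty$, so the resolvent series below converges on $\D$.

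The key structural observation is that $\clw = \ker S_{\alpha}^* = \mathbb{C} e_0$ is one-dimensional, so $k_{S_{\alpha}}$ is scalar-valued and $P_{\clw} = \langle \cdot , e_0 \rangle e_0$. Expanding the resolvent gives $(I - \bar{w} L^*)^{-1} e_0 = \sum_{k \geq 0} \bar{w}^k (L^*)^k e_0 = \sum_{k \geq 0} (\alpha_0 \cdots \alpha_{k-1})^{-1} \bar{w}^k e_k$. Using $\big((I - zL)^{-1}\big)^* = (I - \bar{z} L^*)^{-1}$ in the Shimorin kernel formula \eqref{eqn: k_T(z,w)}, I would then obtain
\[
k_{S_{\alpha}}(z, w) = \big\langle (I - \bar{w} L^*)^{-1} e_0, \, (I - \bar{z} L^*)^{-1} e_0 \big\rangle = \sum_{k=0}^{\infty} \frac{(z \bar{w})^k}{(\alpha_0 \cdots \alpha_{k-1})^2},
\]
where the last equality uses that the $\alpha_n$ are real. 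Since the only nonvanishing Taylor coefficients $C_{mn}$ occur on the diagonal $m = n$, this exhibits $k_{S_{\alpha}}$ as a diagonal kernel, completing the argument.

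There is no serious obstacle here: the proof is a direct computation. The only points that demand care are the correct indexing of the inverse weights in $L$ and $L^*$, and the observation that the cross terms vanish because each $(L^*)^k e_0$ is a scalar multiple of the single basis vector $e_k$. This, together with $\dim \clw = 1$, is exactly what forces the kernel to be diagonal; convergence of the resolvent series is guaranteed by the uniform bound $\|L^*\| = (\inf_n \alpha_n)^{-1} < \infty$.
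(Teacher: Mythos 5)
Your proof is correct and takes essentially the same route as the paper: both arguments rest on the explicit diagonal form of $S_\alpha^* S_\alpha$, the identification $\clw = \ker S_\alpha^* = \mathbb{C}e_0$, and the computation $(L^*)^k e_0 = (\alpha_0\cdots\alpha_{k-1})^{-1} e_k$, with orthonormality of $\{e_k\}$ killing the off-diagonal coefficients. Your only packaging difference is cosmetic: you move everything to one side via $\bigl((I-zL)^{-1}\bigr)^* = (I-\bar{z}L^*)^{-1}$ and pair the two resolvent vectors, whereas the paper computes $P_{\clw} L^m L^{*n} e_0$ termwise and observes it vanishes for $m \neq n$; both yield the same formula $k_{S_\alpha}(z,w) = \sum_{k\geq 0} (\alpha_0\cdots\alpha_{k-1})^{-2}(z\bar{w})^k$. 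One small imprecision worth fixing: finiteness of $\|L^*\| = (\inf_n \alpha_n)^{-1}$ does \emph{not} by itself give convergence of the Neumann series on all of $\D$ --- it only gives it for $|w| < \inf_n \alpha_n$, which may be a proper subdisc. The correct justification is that the Shimorin kernel is defined on $\Omega = \{z : |z| < 1/r(L)\}$, where $r(L)$ is the spectral radius, and on that disc the series $\sum_k \bar{w}^k (L^*)^k$ converges by the spectral radius formula; the paper then normalizes $\Omega = \D$, which is the convention your computation implicitly uses.
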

\begin{proof}
Let $\{e_n\}_{n\geq 0}$ be an orthonormal basis of a Hilbert space $\clh$, and let $S_{\alpha} e_n = \alpha_n e_{n+1}$ for all $n \geq 0$. Observe that $S_{\alpha}^* e_n = \alpha_{n-1} e_{n-1}$, $n \geq 1$, and $S_{\alpha}^* e_0 = 0$. Then $\clw = \ker S_{\alpha}^* = \mathbb{C} e_0$, and $S_{\alpha}^* S_{\alpha} e_n = \alpha_n^2 e_n$ for all $n \geq 0$. Since $S_{\alpha}^* S_{\alpha}$ is a diagonal operator and $\{\alpha_n\}_{n\geq 0}$ is bounded away from zero, it follows that $S_{\alpha}^* S_{\alpha}$ is invertible, and hence $S_{\alpha}$ is left-invertible. Then the Shimorin left inverse $L_{S_{\alpha}} := (S_{\alpha}^* S_{\alpha})^{-1} S_{\alpha}^*$ is given by
\begin{equation}\label{eqn: sec 2 L S en}
L_{S_{\alpha}} e_n =
\begin{cases}
0 & \mbox{if } n =0
\\
\frac{1}{\alpha_{n-1}} e_{n-1}  & \mbox{if } n \geq 1.
\end{cases}
\end{equation}
Therefore, $L_{S_{\alpha}}$ is the backward shift, and
\begin{equation}\label{eqn: sec 2 L m S en}
L_{S_{\alpha}}^m e_n =
\begin{cases}
0 & \mbox{if } m > n
\\
\frac{1}{\alpha_{0} \cdots \alpha_{n-1}} e_{0}  & \mbox{if } m=n
\\
\frac{1}{\alpha_{n-1} \cdots \alpha_{n-m}} e_{n-m}  & \mbox{if } m <n,
\end{cases}
\end{equation}
for all $m \geq 1$. Moreover, $L_{S_{\alpha}}^{*m} e_n = \frac{1}{\alpha_n \alpha_{n+1} \cdots \alpha_{n+m-1}} e_{n+m}$ for all $n \geq 0$ and $m \geq 1$. In particular, $L_{S_{\alpha}}^{*m} e_0 = \frac{1}{\alpha_0 \alpha_{1} \cdots \alpha_{m-1}} e_{m}$, $m\geq 1$, and thus, for each $(m, n) \neq (0,0)$, we have clearly
\[
P_{\clw} L_{S_{\alpha}}^{m} L_{S_{\alpha}}^{*n} e_0 =
\begin{cases}
0 & \mbox{if } m \neq n
\\
\frac{1}{(\alpha_{0} \cdots \alpha_{n-1})^2} e_{0}  & \mbox{if } m = n.
\end{cases}
\]
This immediately gives $k_{S_{\alpha}}(z,w) = \sum_{n=0}^\infty (P_{\clw} L_{S_{\alpha}}^n L_{S_{\alpha}}^{*n}|_{\clw}) (z \bar{w})^n$ for all $z, w \in \D$, where $\clw = \mathbb{C} e_0$. In particular, the Shimorin kernel $k_{S_{\alpha}}$ is a diagonal kernel. Finally, identifying $\clw$ with $\mathbb{C}$ and setting $\beta_n = \frac{1}{\alpha_{0} \cdots \alpha_{n-1}}$, $n\geq 1$, we get
\[
k_{S_{\alpha}}(z,w) = 1 +  \sum_{n=1}^\infty \frac{1}{\beta_n^2} (z \bar{w})^n \quad \quad (z, w \in \D).
\]
\end{proof}

Notice in the above, the Shimorin left inverse $L_{S_{\alpha}}$ is the backward shift corresponding to the weight sequence $\{\frac{1}{\alpha_n}\}_{n \geq 0}$, that is,
\[
L_{S_{\alpha}} =  \begin{bmatrix}
0& \frac{1}{\alpha_0} & 0 & 0 & \dots
\\
0 & 0 &  \frac{1}{\alpha_1} & 0 & \ddots
\\
0 &0&0 &  \frac{1}{\alpha_2} & \ddots
\\
0 &0&0 & 0 & \ddots
\\
\vdots & \vdots & \vdots & \ddots & \ddots
\\
\end{bmatrix}.
\]

In the setting of Proposition \ref{Prop:shimorin diagonal}, we now turn to the unitary map $U: \clh \raro \clh_{k_{S_{\alpha}}}$, where $\clh_{k_{S_{\alpha}}} \subseteq \clo(\D, \clw)$, and $(U f)(z) = \sum_{n=0}^\infty (P_{\clw} L_{S_{\alpha}}^n f) z^n$ for all $f \in \clh$ and $z \in \D$ (see \eqref{eqn: sec 2 Unitary}). Set $f_n = U e_n$, $n \geq 0$. Since $\clw = \mathbb{C} e_0$, \eqref{eqn: sec 2 L S en} yields $f_0 = U e_0 = P_{\clw} e_0 = e_0$. On the other hand, if $n \geq 1$, then \eqref{eqn: sec 2 L m S en} implies that
\[
P_{\clw} L_{S_{\alpha}}^{m} e_n =
\begin{cases}
\frac{1}{\beta_n} e_0  & \mbox{if } m = n
\\
0 & \mbox{otherwise,}
\end{cases}
\]
and hence $f_n = \frac{1}{\beta_n} z^n e_0$. Therefore $\{e_0\} \cup \{\frac{1}{\beta_n} z^n e_0\}_{n\geq 1}$ is the orthonormal basis of $\clh_{k_{S_{\alpha}}}$ corresponding to $U$. Moreover, for each $n \geq 1$, we have
\[
\begin{split}
M_z (\frac{1}{\beta_n} z^n e_0) = \frac{1}{\beta_n} z^{n+1}e_0 = \alpha_n \frac{1}{\beta_{n+1}} z^{n+1}e_0= \alpha_n (\frac{1}{\beta_{n+1}} z^{n+1}e_0),
\end{split}
\]
and hence $M_z$ on $\clh_{k_{S_{\alpha}}}$ is also a weighted shift with the same weights $\{\alpha_n\}_{n\geq 0}$.

\section{Tridiagonal spaces and left-invertibility}\label{sect:TD and left inv}

The main contribution of this section is the left invertibility and representations of Shimorin left inverses of shifts on tridiagonal reproducing kernel Hilbert spaces. Recall that the conditions in \eqref{eqn: sup of an bn} ensures that the shift $M_z$ is bounded on the semi-analytic tridiagonal space $\clh_k$. Here we use the remaining condition \eqref{eqn: bounded away an/bn} to prove that $M_z$ is left-invertible.

Before we state and prove the result, we need to construct a specific bounded linear operator. The choice of this operator is not accidental, as we will see in Theorem \ref{Prop: L Matrix} that it is nothing but the Shimorin left inverse of $M_z$. For each $n\geq 1$, set
\begin{equation}\label{eqn:d_n}
d_n = \frac{b_n}{a_n} - \frac{b_{n-1}}{a_{n-1}}.
\end{equation}

\begin{Proposition}\label{prop: L is bounded}
Let $k$ be an analytic tridiagonal kernel corresponding to the orthonormal basis $\{f_n\}_{n\geq 0}$, where $f_n(z) = (a_n + b_z z) z^n$, $n \geq 0$. Then the linear operator $L$ represented by
\[
[L] = \begin{bmatrix}
0 & \frac{a_1}{a_0} & 0 & 0  & 0 & \dots
\\
0 & {d_1} & \frac{a_2}{a_1} & 0  & 0 & \ddots
\\
0 & \frac{-d_1b_1}{a_2} & d_2 & \frac{a_3}{a_2} & 0 & \ddots
\\
0 & \frac{d_1b_1b_2}{a_2a_3} &\frac{-d_2b_2}{a_3} & d_3 &\frac{a_4}{a_3} & \ddots
\\
0 & \frac{-d_1b_1b_2b_3}{a_2a_3a_4} &\frac{d_2b_2b_3}{a_3a_4} & \frac{-d_3b_3}{a_4} & d_4 & \ddots
\\
\vdots & \vdots & \vdots&\vdots &\ddots &\ddots
\end{bmatrix},
\]
with respect to the orthonormal basis $\{f_n\}_{n \geq 0}$ defines a bounded linear operator on $\clh_k$.
\end{Proposition}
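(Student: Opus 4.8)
The plan is to read off the action of $L$ on the orthonormal basis directly from the matrix and recognize it as a sum of manifestly bounded pieces. First I would verify, column by column using the expansion \eqref{eq: z^n formula} of $z^n$ in the basis $\{f_n\}$, that $L f_0 = 0$ and
\[
L f_n = \frac{a_n}{a_{n-1}}\, f_{n-1} + d_n\, a_n z^n \qquad (n \geq 1),
\]
where $d_n$ is as in \eqref{eqn:d_n}. Indeed, the $n$-th column of $[L]$ consists of $\frac{a_n}{a_{n-1}}$ in the superdiagonal slot together with $d_n$ times the coefficients $(-1)^m \prod_{j=0}^{m-1}(b_{n+j}/a_{n+j+1})$ appearing in the expansion of $a_n z^n$. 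This reformulation is the whole point: it exhibits $L = W + R$, where $W f_n = \frac{a_n}{a_{n-1}} f_{n-1}$ (with $W f_0 = 0$) is a weighted backward shift and $R f_n = d_n\, a_n z^n$.

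Boundedness of $W$ is immediate: by the analytic condition \eqref{eqn: bounded away an/bn} we have $|a_n/a_{n-1}| < 1/\epsilon$ for all $n$, so $\|W\| \leq 1/\epsilon$. For $R$ the naive triangle-inequality bound only yields an $\ell^1$ estimate and fails, so instead I would factor $R$ through a geometric series. Introduce the weighted forward shift $S$ and the diagonal operator $D$ defined by
\[
S f_n = -\frac{b_n}{a_{n+1}}\, f_{n+1}, \qquad D f_0 = 0, \quad D f_n = d_n f_n \ \ (n \geq 1).
\]
Both are bounded: $\{b_n/a_{n+1}\}$ is bounded by \eqref{eqn: sup of an bn}, and $\{d_n\}$ is bounded because $|b_n/a_n| = |b_n/a_{n+1}|\,|a_{n+1}/a_n|$ is controlled using \eqref{eqn: sup of an bn} together with $|a_{n+1}/a_n| < 1/\epsilon$ from \eqref{eqn: bounded away an/bn}. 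The key identity is that $\sum_{k\geq 0} S^k f_n = a_n z^n$, which is precisely \eqref{eq: z^n formula}; hence $R = (I - S)^{-1} D$ once the Neumann series is shown to converge.

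The crux, and the step I expect to require the most care, is the norm-convergence of $\sum_k S^k$. Here $\|S\| = \sup_n |b_n/a_{n+1}|$ need not be strictly less than $1$, so I cannot simply invoke the standard Neumann series bound. Instead I would use the spectral-radius formula for weighted shifts, $r(S) = \lim_{k} \big(\sup_n \prod_{j=n}^{n+k-1} |b_j/a_{j+1}|\big)^{1/k}$, together with $\limsup_n |b_n/a_{n+1}| < 1$ from \eqref{eqn: sup of an bn}, to conclude that $r(S) < 1$. This gives $\|S^k\|^{1/k} \to r(S) < 1$, hence $\sum_k \|S^k\| < \infty$ and $(I-S)^{-1} = \sum_k S^k$ is a bounded operator. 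Consequently $R = (I-S)^{-1}D$ is bounded, and therefore $L = W + (I-S)^{-1}D$ is bounded, as claimed. As an alternative to the spectral-radius argument, one could establish boundedness of $R$ by a Schur test: the geometric decay of $\prod_{j=n}^{m-1}|b_j/a_{j+1}|$ for indices beyond the threshold implicit in \eqref{eqn: sup of an bn} makes both the row sums and the column sums of $|\langle R f_n, f_m\rangle|$ uniformly bounded.
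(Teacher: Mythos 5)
Your proof is correct, but it is organized differently from the paper's, and the comparison is instructive. The paper splits $[L]$ along its diagonals: with $S$ the superdiagonal part and $L_i$ the $i$-th subdiagonal part, it bounds $\|S\| = \sup_n |a_{n+1}/a_n|$ and shows $\|L_i\| \leq M^{n_0+1} r^{i-n_0}$ for $i > n_0$ (where $r<1$, $n_0$ come from $\limsup_n |b_n/a_{n+1}| < 1$, exactly as in your argument), so that $S + \sum_i L_i$ converges absolutely. Your decomposition $L = W + (I-S)^{-1}D$ is in fact the same series in disguise: your operators $S^i D$ are precisely the paper's subdiagonal pieces $L_i$ (e.g. $SDf_n = -\frac{d_n b_n}{a_{n+1}} f_{n+1}$ is the first subdiagonal), so the underlying estimate coincides. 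What your packaging buys is conceptual: you identify the strictly lower-triangular part as a Neumann series of a single weighted forward shift composed with a diagonal, and along the way you establish the formula $Lf_n = \frac{a_n}{a_{n-1}} f_{n-1} + d_n a_n z^n$, which the paper only extracts later, in the computation preceding Theorem \ref{Prop: L Matrix}, when identifying $L$ with $L_{M_z}$; your proof thus does double duty. One remark on economy: invoking the spectral-radius formula for weighted shifts is more machinery than needed, since the exact identity $\|S^k\| = \sup_n \prod_{j=n}^{n+k-1} |b_j/a_{j+1}| \leq M^{n_0}\, r^{k-n_0}$ (for $k > n_0$) already gives summability of $\sum_k \|S^k\|$ directly --- this elementary bound is exactly what the paper uses, so your argument reduces to it anyway. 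All the supporting facts you use check out: $\|W\| \leq 1/\epsilon$ follows from \eqref{eqn: bounded away an/bn}, boundedness of $\{d_n\}$ follows by the same factorization $\frac{b_n}{a_n} = \frac{b_n}{a_{n+1}}\cdot\frac{a_{n+1}}{a_n}$ the paper employs, and the identity $\sum_{k \geq 0} S^k f_n = a_n z^n$ is a correct restatement of \eqref{eq: z^n formula}.
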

\begin{proof}
For each $n \geq 1$, we have clearly $d_n=\frac{b_n}{a_n}-\frac{b_{n-1}}{a_{n-1}} = \frac{a_{n+1}}{a_n}\frac{b_n}{a_{n+1}}-\frac{a_n}{a_{n-1}}\frac{b_{n-1}}{a_n}$, and hence
\[
|d_n| \leq  \Big|\frac{a_{n+1}}{a_n}\Big| \Big| \frac{b_n}{a_{n+1}} \Big|+ \Big| \frac{a_n}{a_{n-1}}\Big| \Big| \frac{b_{n-1}}{a_n} \Big|.
\]
Since $\{|\frac{a_n}{a_{n+1}}|\}_{n\geq 0}$ is bounded away from zero (see \eqref{eqn: bounded away an/bn}), we have that $\sup_{n \geq 0}| \frac{a_{n+1}}{a_{n}}| < \infty$. This and the second assumption then imply that $\{d_n\}$ is a bounded sequence.

\NI Let $S$ denote the matrix obtained from $[L]$ by deleting all but the superdiagonal elements of $[L]$. Similarly, $L_0$ denote the matrix obtained from $[L]$ by deleting all but the diagonal elements of $[L]$, and in general, assume that $L_i$ denote the matrix obtained from $[L]$ by deleting all but the $i$-th subdiagonal of $[L]$, $i=0, 1, 2 \ldots$. Since
\[
L = S + \sum_{i\geq 0} L_i,
\]
it clearly suffices to prove that $S$ and $\{L_i\}_{i \geq 0}$ are bounded, and $S + \sum_{i\geq 0} L_i$ is absolutely convergent. Note that $\|S\| = \sup_{n \geq 0} | \frac{a_{n+1}}{a_{n}}| < \infty$. Moreover, our assumption $\limsup_{n\geq 0}| \frac{b_n}{a_{n+1}}| < 1$ implies that there exist $r < 1$ and $n_0 \in \mathbb{N}$ such that
\[
\Big| \frac{b_n}{a_{n+1}} \Big| < r \qquad (n \geq n_0).
\]
Set
\[
M = \sup_{n \geq 1} \Big\{\Big|\frac{b_n}{a_{n+1}}\Big|, |d_n| \Big\}.
\]
Then $\|L_i\| \leq M^{i+1}$ for all $i=0, \ldots, n_0$, and
\[
\|L_i\| \leq M^{n_0+1} r^{i-n_0} \quad \quad (i > n_0),
\]
from which it follows that
\[
\begin{split}
\|S\| + \sum_{i\geq 0} \|L_i\| & = \sup_{n \geq 0} \Big| \frac{a_{n+1}}{a_{n}}\Big| + \sum_{0 \leq i\leq n_0} \|L_i\| + \sum_{i\geq n_0+1} \|L_i\|
\\
& \leq \sup_{n \geq 0} \Big| \frac{a_{n+1}}{a_{n}}\Big| + \sum_{0 \leq i\leq n_0} \|L_i\| + M^{n_0+1} \Big(\sum_{i \geq n_0+1} r^{i-n_0} \Big)
\\
& \leq \sup_{n \geq 0} \Big| \frac{a_{n+1}}{a_{n}}\Big| + \sum_{0 \leq i\leq n_0} \|L_i\| + M^{n_0+1} \frac{r}{1-r},
\end{split}
\]
and completes the proof of the theorem.
\end{proof}

We are now ready to prove that $M_z$ is left-invertible.

\begin{Theorem}\label{thm: L M_z = I}
In the setting of Proposition \ref{prop: L is bounded}, we have $L M_z = I_{\clh_k}$.
\end{Theorem}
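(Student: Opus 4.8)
The plan is to check the operator identity on the orthonormal basis: since $\{f_n\}_{n\geq 0}$ is a basis and both $L$ (by Proposition \ref{prop: L is bounded}) and $M_z$ are bounded, it suffices to verify that $L M_z f_n = f_n$ for every $n \geq 0$.

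The first step is to convert the matrix $[L]$ into a workable closed form for $L f_n$. Reading column $n$ (for $n \geq 1$) off $[L]$, the superdiagonal entry contributes $\frac{a_n}{a_{n-1}} f_{n-1}$, while the diagonal-and-below entries are exactly $d_n$ times the coefficients appearing in the expansion \eqref{eq: z^n formula} of $z^n$; hence they assemble into $d_n a_n z^n$, giving $L f_0 = 0$ and $L f_n = \frac{a_n}{a_{n-1}} f_{n-1} + d_n a_n z^n$ for $n \geq 1$. Substituting $f_{n-1} = a_{n-1}z^{n-1}+b_{n-1}z^n$ and $d_n a_n = b_n - \frac{a_n}{a_{n-1}} b_{n-1}$ (from \eqref{eqn:d_n}), the $z^n$-terms cancel and one is left with the clean identity
\[
L f_n = a_n z^{n-1} + b_n z^n \qquad (n \geq 1).
\]

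The crux is then the claim that $L z^m = z^{m-1}$ for all $m \geq 1$. To see it, I would expand $z^m = \frac{1}{a_m}\sum_{k\geq 0} p_{m,k} f_{m+k}$ via \eqref{eq: z^n formula}, where $p_{m,k}=(-1)^k \frac{b_m \cdots b_{m+k-1}}{a_{m+1}\cdots a_{m+k}}$, apply $L$ term by term, and substitute the closed form above to obtain $L z^m = \frac{1}{a_m}\sum_{k\geq 0} p_{m,k}(a_{m+k} z^{m+k-1} + b_{m+k} z^{m+k})$. The coefficient of $z^{m-1}$ is $p_{m,0}=1$, while for each $j \geq 0$ the coefficient of $z^{m+j}$ is a multiple of $p_{m,j+1}a_{m+j+1}+p_{m,j}b_{m+j}$, which vanishes by the recursion $p_{m,k+1}a_{m+k+1} = -p_{m,k}b_{m+k}$ built into the $p_{m,k}$. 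Thus $L z^m = z^{m-1}$.

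Granting the claim, the conclusion follows at once: from $M_z f_n = a_n z^{n+1}+b_n z^{n+2}$ and $n+1,n+2 \geq 1$,
\[
L M_z f_n = a_n\, L z^{n+1} + b_n\, L z^{n+2} = a_n z^n + b_n z^{n+1} = f_n,
\]
for all $n \geq 0$, i.e.\ $L M_z = I_{\clh_k}$. The one step requiring care is the passage to the termwise action of $L$ and the regrouping of powers of $z$ in the displayed series: I would justify the first by the boundedness of $L$ together with norm-convergence of the basis expansion of $z^m$, and the second by continuity of point evaluation on $\clh_k$, which reduces the rearrangement to an absolutely convergent scalar series at each fixed point of $\D$. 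This regrouping is the only real obstacle; the rest is bookkeeping.
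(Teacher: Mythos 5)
Your proof is correct, but it takes a genuinely different route from the paper's. The paper proves $LM_z = I_{\clh_k}$ by brute force: it multiplies the matrix representations, writing out the $m$-th row of $[L]$ against the $n$-th column of $[M_z]$, and kills each entry $\alpha_{mn}$ with $n \leq m-2$ using the identity $c_n = -\frac{a_n}{a_{n+2}}d_{n+1}$ together with the telescoping sum $d_{n+2} + \cdots + d_m = \frac{b_m}{a_m} - \frac{b_{n+1}}{a_{n+1}}$; this is entirely algebraic and involves no convergence or rearrangement questions. You instead extract the functional action of $L$: reading off the columns of $[L]$ against \eqref{eq: z^n formula} gives $Lf_0 = 0$ and $Lf_n = \frac{a_n}{a_{n-1}}f_{n-1} + d_n a_n z^n = a_n z^{n-1} + b_n z^n$ for $n \geq 1$, from which you deduce $Lz^m = z^{m-1}$ and conclude in one line. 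This is exactly the backward-shift property that the paper records later, in \eqref{eqn:L_Mz back shift}, for the Shimorin left inverse $L_{M_z}$ --- but note the logical order: the paper can only introduce $L_{M_z} = (M_z^*M_z)^{-1}M_z^*$ after Theorem \ref{thm: L M_z = I} has established left-invertibility, whereas you derive the property directly for the concretely defined $L$ of Proposition \ref{prop: L is bounded}, so there is no circularity. What each approach buys: the paper's computation is self-contained and purely finite-dimensional in each entry, at the cost of heavy bookkeeping; yours is shorter and conceptually explains \emph{why} $L$ is a left inverse, at the cost of the one analytic step you rightly flag --- termwise application of $L$ (fine, by boundedness of $L$ and norm convergence of the basis expansion of $z^m$) and the regrouping of powers in $\sum_k p_{m,k}(a_{m+k}z^{m+k-1} + b_{m+k}z^{m+k})$. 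Your claimed absolute convergence at each fixed $z \in \D$ does hold, but it deserves a word of proof: from $\limsup_n |b_n/a_{n+1}| < 1$ one gets $\|z^n\|_{\clh_k} \leq C/|a_n|$ with $C$ uniform in $n$, whence $|a_n||z|^n \leq C\sqrt{k(z,z)}$ and $|p_{m,k}| \leq C'r^k$, which together dominate the series; alternatively, one can verify the identity $Lz^m = z^{m-1}$ only on a small disc about the origin (where absolute convergence is immediate from the geometric bounds $|a_n| \leq |a_0|\epsilon^{-n}$) and invoke the identity theorem, since $Lz^m \in \clh_k \subseteq \clo(\D)$. With that detail supplied, your argument is complete.
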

\begin{proof}
We consider the matrix representations of $M_z$ and $L$ as in \eqref{eqn: Mz matrix} and Proposition \ref{prop: L is bounded}, respectively. Let $[L] [M_z] = (\alpha_{mn})_{m, n \geq0}$. Clearly it suffices to prove that $\alpha_{mn} = \delta_{mn}$. It is easy to see that $\alpha _{m,m+k}=0$ for all $k \geq 1$. Now by \eqref{eq: c_n b_n a_n}, we have
\begin{equation}\label{eqn: cn=ac dn}
c_n = - \frac{a_n}{a_{n+2}} d_{n+1} \quad \quad (n \geq 0).
\end{equation}
Note that the $n$-th column, $n \geq 0$, of $[M_z]$ is the transpose of
\[
\begin{split}
\Big(\underbrace{0, \ldots, 0}_{n+1}, \frac{a_n}{a_{n+1}}, c_n, - \frac{c_n b_{n+2}}{a_{n+3}},\ldots , (-1)^{m-n-2} \frac{c_n b_{n+2}\cdots b_{m-1}}{a_{n+3}\cdots a_m}, (-1)^{m-n-1} \frac{c_n b_{n+2}\cdots b_m}{a_{n+3}\cdots a_{m+1}}, \ldots \Big),
\end{split}
\]
and the $m$-th row, $m\geq 0$, of $[L]$ is given by
\[
\begin{split}
\Big(0\, , & \, (-1)^{m-1} \frac{ d_1 b_1\cdots b_{m-1}}{a_2\cdots a_m}, (-1)^{m-2} \frac{d_2 b_2\cdots b_{m-1}}{a_3\cdots a_m}, (-1)^{m-3}  \frac{d_3 b_3\cdots b_{m-1}}{a_4\cdots a_m}, \ldots
\\
& \quad \quad \quad \ldots, \frac{-d_{m-1}b_{m-1}}{a_m},d_m,\frac{a_{m+1}}{a_m},0,0,\ldots \Big).
\end{split}
\]
Now, if $n\leq (m-2)$, then the $\alpha_{mn}$ (the $(m, n)$-th entry of $[L] [M_z]$) is given by
\[
\begin{split}
\alpha_{mn} & = (-1)^{m-n-1}\frac{d_{n+1}b_{n+1}\cdots b_{m-1}}{a_{n+2}\cdots a_m} \, \frac{a_n}{a_{n+1}}+(-1)^{m-n-2}\frac{d_{n+2}b_{n+2}\cdots b_{m-1}}{a_{n+3}\cdots a_m}c_n
\\
& \quad +(-1)^{m-n-3}\frac{d_{n+3}b_{n+3}\cdots b_{m-1}}{a_{n+4}\cdots a_m}(-c_n\frac{b_{n+2}}{a_{n+3}})+\cdots+(-\frac{d_{m-1}b_{m-1}}{a_m})(-1)^{m-n-3} \times
\\
& \quad
c_n\frac{b_{n+2}\cdots b_{m-2}}{a_{n+3}\cdots a_{m-1}} + d_m(-1)^{m-n-2}c_n\frac{b_{n+2}\cdots b_{m-1}}{a_{n+3}\cdots a_m}+\frac{a_{m+1}}{a_m}(-1)^{m-n-1}c_n\frac{b_{n+2}\cdots b_m}{a_{n+3}\cdots a_m a_{m+1}},
\end{split}
\]
and hence, using \eqref{eqn: cn=ac dn}, we obtain
\[
\begin{split}
\alpha_{mn} & = (-1)^{m-n-1}d_{n+1}\frac{a_n b_{n+1}\cdots b_{m-1}}{a_{n+1}a_{n+2}\cdots a_m}+(-1)^{m-n-2}(-\frac{a_n}{a_{n+2}}d_{n+1})\frac{d_{n+2}b_{n+2}\cdots b_{m-1}}{a_{n+3}\cdots a_m} +
\\
& \quad (-1)^{m-n-2}(-\frac{a_n}{a_{n+2}}d_{n+1})(\frac{b_{n+2}}{a_{n+3}})(\frac{d_{n+3}b_{n+3}\cdots b_{m-1}}{a_{n+4}\cdots a_m})+ \cdots +
\\
&  \quad  \cdots + (-1)^{m-n-2}(-\frac{a_n}{a_{n+2}}d_{n+1}) \frac{d_{m-1} b_{n+2}\cdots b_{m-1}}{a_{n+3}\cdots a_m} +
\\
& \quad (-1)^{m-n-2}(-\frac{a_n}{a_{n+2}}d_{n+1})\frac{d_m b_{n+2}\cdots b_{m-1}}{a_{n+3}\cdots a_m} + (-1)^{m-n-1}(-\frac{a_n}{a_{n+2}}d_{n+1})(\frac{b_{n+2}\cdots b_m}{a_{n+3}\cdots a_m^2})
\\
& = (-1)^{m-n-1}d_{n+1} \Big(\frac{a_n b_{n+1}\cdots b_{m-1}}{a_{n+1}a_{n+2}\cdots a_m} + \frac{a_n b_{n+2}\cdots b_{m-1}}{a_{n+2}a_{n+3}\cdots a_m} d_{n+2} + \frac{a_n b_{n+2}\cdots b_{m-1}}{a_{n+2}a_{n+3}\cdots a_m} d_{n+3} +
\\
& \quad \cdots  + \frac{a_n b_{n+2}\cdots b_{m-1}}{a_{n+2}a_{n+3}\cdots a_m} d_{m-1}
+ \frac{a_n b_{n+2}\cdots b_{m-1}}{a_{n+2}a_{n+3}\cdots a_m} d_m - \frac{a_n b_{n+2}\cdots b_m}{a_{n+2}a_{n+3}\cdots a_m^2} \Big)
\\
& =(-1)^{m-n-1}d_{n+1} \frac{a_n b_{n+2}\cdots b_{m-1}}{a_{n+2}a_{n+3}\cdots a_m} \left(\frac{b_{n+1}}{a_{n+1}}+(d_{n+2}+d_{n+3}+\cdots +d_{m-1}+d_m)-\frac{b_m}{a_m}\right).
\end{split}
\]
Recall from \eqref{eqn:d_n} that $d_n = \frac{b_n}{a_n} - \frac{b_{n-1}}{a_{n-1}}$, $n \geq 1$. Then
\[
\alpha_{mn} =(-1)^{m-n-1}d_{n+1} \frac{a_n b_{n+2} \cdots b_{m-1}}{a_{n+2}a_{n+3}\cdots a_m} \left((\frac{b_{n+1}}{a_{n+1}}-\frac{b_m}{a_m}) + (\frac{b_m}{a_m}-\frac{b_{n+1}}{a_{n+1}})\right)  = 0.
\]
For the case $n= m-1$, we have
\[
\alpha_{m, m-1} = d_m (\frac{a_{m-1}}{a_m}) + \frac{a_{m+1}}{a_m}(c_{m-1}) = (\frac{a_{m-1}}{a_m}) d_m + \frac{a_{m+1}}{a_m} (-\frac{a_{m-1}}{a_{m+1}}d_m) = 0,
\]
and finally, $\alpha_{mm} = (\frac{a_{m+1}}{a_m}) (\frac{a_m}{a_{m+1}}) = 1$ completes the proof.
\end{proof}

In view of Theorem \ref{thm: L M_z = I}, let us point out, in particular (see the discussion following \eqref{eqn: cap Xn H =0}), that shifts on analytic tridiagonal spaces are always analytic:

\begin{Proposition}
If $k$ is an analytic tridiagonal kernel, then $M_z$ is an analytic left-invertible operator on $\clh_k$.
\end{Proposition}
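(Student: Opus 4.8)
The plan is to assemble the statement from the two facts already established in this section, together with the general analyticity observation recorded after \eqref{eqn: cap Xn H =0}. By hypothesis $k$ is an analytic tridiagonal kernel, so $\clh_k$ is in particular a semi-analytic tridiagonal space satisfying the bounds in \eqref{eqn: sup of an bn}, which guarantee that $M_z \in \clb(\clh_k)$, and in addition the sequence $\{|a_n/a_{n+1}|\}_{n\geq 0}$ is bounded away from zero as in \eqref{eqn: bounded away an/bn}. These are precisely the hypotheses of Proposition \ref{prop: L is bounded}.

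First I would dispose of left-invertibility. Proposition \ref{prop: L is bounded} produces a bounded operator $L \in \clb(\clh_k)$ with the displayed matrix representation, and Theorem \ref{thm: L M_z = I} shows $L M_z = I_{\clh_k}$. Thus $M_z$ admits a bounded left inverse, which (as noted at the start of the discussion of Shimorin's model) is equivalent to $M_z$ being bounded below and to the invertibility of $M_z^* M_z$. Hence $M_z$ is left-invertible.

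It remains to verify analyticity, i.e. $\bigcap_{n=0}^\infty M_z^n \clh_k = \{0\}$. This is the general principle recorded after \eqref{eqn: cap Xn H =0}, applied with $\Omega = \D$: if $f \in \bigcap_{n\geq 0} M_z^n \clh_k$, then for every $n \geq 0$ there is $g_n \in \clh_k$ with $f = z^n g_n$, so the analytic function $f$ on the domain $\D$ vanishes to infinite order at the origin and therefore $f \equiv 0$. Consequently $M_z$ is analytic.

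The statement carries essentially no independent difficulty: all the real work has already been carried out in Proposition \ref{prop: L is bounded} and Theorem \ref{thm: L M_z = I}, and the analyticity half is the standard divisibility argument for shifts on spaces of analytic functions on a domain. The only point meriting a line of comment is that the membership $\clh_k \subseteq \clo(\D)$, built into the definition of a (semi-)analytic tridiagonal space, is exactly what legitimizes the infinite-order-zero argument; so the proof reduces to citing Theorem \ref{thm: L M_z = I} and invoking this divisibility observation.
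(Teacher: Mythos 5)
Your proof is correct and follows essentially the same route as the paper, which presents this proposition as an immediate consequence of Theorem \ref{thm: L M_z = I} (left-invertibility via the bounded left inverse $L$ of Proposition \ref{prop: L is bounded}) together with the divisibility observation recorded after \eqref{eqn: cap Xn H =0} (analyticity of any bounded shift on a space of analytic functions on a domain). Your added remark that $\clh_k \subseteq \clo(\D)$ is what legitimizes the infinite-order-zero argument is exactly the right point to flag.
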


Now let $\clh_k$ be an analytic tridiagonal space. Our aim is to compute the Shimorin left inverse $L_{M_z} = (M_z^* M_z)^{-1} M_z^*$ of $M_z$ on $\clh_k$. What we prove in fact is that $L$ in Proposition \ref{prop: L is bounded} is the Shimorin left inverse of $M_z$. First note that 
\begin{equation}\label{eqn:L_Mz back shift}
L_{M_z} z^n = z^{n-1} \qquad (n \geq 1).
\end{equation}
Indeed, $L_{M_z} z^n = (M_z^* M_z)^{-1} M_z^* M_z z^{n-1} = (M_z^* M_z)^{-1} (M_z^* M_z) z^{n-1}$. Therefore, $L_{M_z}$ is the backward shift on $\clh_k$ (a well known fact about Shimorin left inverses). On the other hand, by Lemma \ref{lemma: ker Mz*} we have $L_{M_z} f_0 = (M_z^* M_z)^{-1} M_z^* f_0 = 0$, and hence $L_{M_z} f_0 = 0$, which in particular yields
\begin{equation}\label{eqn:L_1}
L_{M_z} 1 = - \frac{b_0}{a_0}.
\end{equation}
Let $n \geq 1$. Using \eqref{eqn:d_n}, we have $L_{M_z} f_n = L_{M_z}(a_n z^n + b_n z^{n+1})= a_n z^{n-1} + b_n z^n$, which implies
\[
L_{M_z} f_n = \frac{a_n}{a_{n-1}} (a_{n-1} z^{n-1} + b_{n-1} z^n) + (b_n - \frac{a_n b_{n-1}}{a_{n-1}})z^n = \frac{a_n}{a_{n-1}} f_{n-1} + d_n a_n z^n,
\]
and hence $L_{M_z} f_n = \frac{a_n}{a_{n-1}} f_{n-1} + d_n(a_n z^n + b_n z^{n+1}) - d_n b_n z^{n+1}$. By \eqref{eq: z^n formula}, we have
\[
L_{M_z} f_n = \frac{a_n}{a_{n-1}} f_{n-1} + d_n f_n - d_n \Big(\sum_{m=0}^{\infty} (-1)^m \frac{\prod_{j=0}^{m} b_{n+j}}{\prod_{j=0}^{m} a_{n+1+j}} f_{n+1+m} \Big).
\]
This is precisely the left inverse $L$ of $M_z$ in Proposition \ref{prop: L is bounded}. Whence the next statement:

\begin{Theorem}\label{Prop: L Matrix}
Let $\clh_k$ be an analytic tridiagonal space. If $L$ is as in Proposition \ref{prop: L is bounded}, then the Shimorin left inverse $L_{M_z}$ of $M_z$ is given by $L_{M_z} = L$. In particular, $L_{M_z} f_0 = 0$,
and
\[
L_{M_z} f_n = \frac{a_n}{a_{n-1}} f_{n-1} + d_n f_n - d_n \Big(\sum_{m=0}^{\infty} (-1)^m \frac{\prod_{j=0}^{m} b_{n+j}}{\prod_{j=0}^{m} a_{n+1+j}} f_{n+1+m} \Big) \quad \quad (n\geq 1),
\]
where $d_n = \frac{b_n}{a_n} - \frac{b_{n-1}}{a_{n-1}}$ for all $n \geq 1$. Moreover, the matrix representation of $L_{M_z}$ with respect to the orthonormal basis $\{f_n\}_{n \geq 0}$ is given by
\[
[L_{M_z}] = \begin{bmatrix}
0 & \frac{a_1}{a_0} & 0 & 0  & 0 & \dots
\\
0 & {d_1} & \frac{a_2}{a_1} & 0  & 0 & \ddots
\\
0 & \frac{-d_1b_1}{a_2} & d_2 & \frac{a_3}{a_2} & 0 & \ddots
\\
0 & \frac{d_1b_1b_2}{a_2a_3} &\frac{-d_2b_2}{a_3} & d_3 &\frac{a_4}{a_3} & \ddots
\\
0 & \frac{-d_1b_1b_2b_3}{a_2a_3a_4} &\frac{d_2b_2b_3}{a_3a_4} & \frac{-d_3b_3}{a_4} & d_4 & \ddots
\\
\vdots & \vdots & \vdots&\vdots &\ddots &\ddots
\end{bmatrix}.
\]
\end{Theorem}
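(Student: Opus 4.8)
The plan is to pin down $L$ as $L_{M_z}$ by means of a uniqueness principle: among all bounded left inverses of a bounded-below operator $T$, the Shimorin left inverse $L_T = (T^*T)^{-1}T^*$ is the only one that annihilates the wandering subspace $\ker T^*$. To see this, suppose $X$ and $Y$ are left inverses of $T$, both vanishing on $\ker T^*$. Then $(X-Y)T = 0$, so $X-Y$ vanishes on $\mbox{ran}\, T$, which is closed and equals $(\ker T^*)^\perp$ since $T$ is bounded below; as $X-Y$ also vanishes on $\ker T^*$, we get $X = Y$. Because $M_z^*$ kills $\ker M_z^*$, so does $L_{M_z} = (M_z^*M_z)^{-1}M_z^*$, and hence $L_{M_z}$ is the unique left inverse of $M_z$ with this property.

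With this principle in hand, the theorem reduces to two checks for the operator $L$ of Proposition \ref{prop: L is bounded}. First, $L M_z = I_{\clh_k}$ is exactly Theorem \ref{thm: L M_z = I}, so $L$ is a left inverse of $M_z$. Second, $L$ annihilates $\ker M_z^* = \mathbb{C} f_0$ (Lemma \ref{lemma: ker Mz*}): this is read off instantly from the matrix $[L]$, whose zeroth column is identically zero, giving $L f_0 = 0$. The uniqueness principle then forces $L = L_{M_z}$, and the matrix representation asserted in the statement is precisely the matrix of $L$ already displayed in Proposition \ref{prop: L is bounded}.

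It remains to record the closed-form action of $L_{M_z}$ on the basis. Since $L_{M_z} M_z = I$ and $z^n = M_z z^{n-1}$, the left inverse acts as the backward shift on monomials, $L_{M_z} z^n = z^{n-1}$ for $n \geq 1$. Applying this to $f_n = a_n z^n + b_n z^{n+1}$ gives $L_{M_z} f_n = a_n z^{n-1} + b_n z^n = \frac{a_n}{a_{n-1}} f_{n-1} + a_n d_n z^n$, where $d_n = \frac{b_n}{a_n} - \frac{b_{n-1}}{a_{n-1}}$. Substituting the expansion \eqref{eq: z^n formula} of $z^n$ in the basis $\{f_m\}$ and splitting off the $m=0$ term reorganizes the tail into
\[
L_{M_z} f_n = \frac{a_n}{a_{n-1}} f_{n-1} + d_n f_n - d_n \sum_{m=0}^{\infty} (-1)^m \frac{\prod_{j=0}^{m} b_{n+j}}{\prod_{j=0}^{m} a_{n+1+j}} f_{n+1+m} \qquad (n \geq 1),
\]
which is the claimed formula; the case $f_0$ is covered by $L f_0 = 0$.

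The only substantive input here is Theorem \ref{thm: L M_z = I}; everything else is bookkeeping. Were that theorem not already available, the main obstacle would be verifying $L M_z = I$ entrywise, where the off-diagonal entries of $[L][M_z]$ cancel precisely because of the telescoping identity $\frac{b_{n+1}}{a_{n+1}} + \sum_{k=n+2}^{m} d_k = \frac{b_m}{a_m}$ built into the definition of the $d_k$. Granting Theorem \ref{thm: L M_z = I}, the present statement follows cleanly from the uniqueness principle together with the vanishing zeroth column of $[L]$.
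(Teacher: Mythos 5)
Your proposal is correct, and it identifies $L$ with $L_{M_z}$ by a genuinely different mechanism than the paper. The paper computes $L_{M_z}=(M_z^*M_z)^{-1}M_z^*$ outright on the basis: it first observes $L_{M_z}z^n=z^{n-1}$ for $n\geq 1$ and $L_{M_z}f_0=0$ (via Lemma \ref{lemma: ker Mz*}), then expands $L_{M_z}f_n = a_nz^{n-1}+b_nz^n$ through \eqref{eq: z^n formula} to obtain the displayed formula, and only afterwards notes that the resulting matrix coincides entrywise with $[L]$ from Proposition \ref{prop: L is bounded}. You instead isolate a reusable uniqueness principle --- a bounded-below $T$ has closed range, so $\clh = \mbox{ran}\, T \oplus \ker T^*$, and any two bounded left inverses vanishing on $\ker T^*$ agree on both summands --- which reduces the identification $L=L_{M_z}$ to exactly two checks: $LM_z=I_{\clh_k}$ (Theorem \ref{thm: L M_z = I}) and $Lf_0=0$ (the vanishing zeroth column of $[L]$, combined with Lemma \ref{lemma: ker Mz*}). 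All the ingredients are in order: $M_z$ is bounded below because $L$ is a bounded left inverse, so $L_{M_z}$ exists and kills $\ker M_z^*$ since $M_z^*$ does, and your subsequent derivation of the closed-form action (backward shift on monomials, then \eqref{eq: z^n formula}) reproduces the paper's computation verbatim --- though strictly speaking that last step is redundant in your scheme, since once $L=L_{M_z}$ is known the formula can simply be read off the columns of $[L]$. What your route buys is conceptual economy and a principle applicable to any left-invertible operator (it makes transparent why matching on the wandering subspace suffices); what the paper's route buys is independence of the heavy entrywise verification $LM_z=I$ for the \emph{formula} itself, since the direct computation of $L_{M_z}f_n$ needs only left-invertibility for existence, and it is this explicit computation that the paper later reuses (e.g., in Theorem \ref{thm: iff bounded away}, where the same argument is invoked without assuming $L$ is the left inverse in hand).
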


Next we verify that the bounded away assumption of $\{|\frac{a_n}{a_{n+1}}|\}_{n\geq 0}$ in \eqref{eqn: bounded away an/bn} is also a necessary condition for left-invertible shifts.

\begin{Theorem}\label{thm: iff bounded away}
Let $\clh_k$ be a semi-analytic tridiagonal space corresponding to the orthonormal basis $\{f_n\}_{n\geq 0}$, where $f_n(z) = (a_n + b_n z) z^n$, $n\geq 0$. Then $M_z$ is left-invertible if and only if $\{|\frac{a_n}{a_{n+1}}|\}_{n\geq 0}$ is bounded away from zero, or equivalently, $\clh_k$ is an analytic tridiagonal space.
\end{Theorem}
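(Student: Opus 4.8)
The statement is an `if and only if', and one direction is already in hand. The plan is to deduce the `if' direction (bounded away $\Rightarrow$ left-invertible) directly from the work already done: if $\{|\frac{a_n}{a_{n+1}}|\}_{n\geq 0}$ is bounded away from zero, then $\clh_k$ is by definition an analytic tridiagonal space, so Proposition \ref{prop: L is bounded} produces a bounded operator $L$ and Theorem \ref{thm: L M_z = I} gives $L M_z = I_{\clh_k}$; hence $M_z$ is left-invertible. All the real content is in the converse.

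For the converse I would exploit the fact that $M_z$ left-invertible is equivalent to $M_z$ being bounded below, i.e.\ there is $\delta > 0$ with $\|M_z h\| \geq \delta \|h\|$ for every $h \in \clh_k$. The naive choice $h = f_n$ fails: by \eqref{eq: M_z} the vector $f_{n+1}$ occurs in $M_z f_n$ with coefficient exactly $\frac{a_n}{a_{n+1}}$ while the remaining terms are orthogonal to $f_{n+1}$ and only add to $\|M_z f_n\|$, so testing on $f_n$ bounds $|\frac{a_n}{a_{n+1}}|$ from below by a \emph{larger} quantity, not by the ratio we want. Instead I would test on the monomials $z^n \in \mathbb{C}[z] \subseteq \clh_k$, using the triviality $M_z z^n = z^{n+1}$.

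Concretely, expanding $z^n$ in the orthonormal basis via \eqref{eq: z^n formula} gives $\|z^n\|^2 = |a_n|^{-2} G_n$, where $G_n = \sum_{m \geq 0} \prod_{j=0}^{m-1} |\frac{b_{n+j}}{a_{n+j+1}}|^2$ satisfies the recursion $G_n = 1 + |\frac{b_n}{a_{n+1}}|^2 G_{n+1}$. Boundedness below applied to $h = z^n$ then reads $\|z^{n+1}\|^2 \geq \delta^2 \|z^n\|^2$, that is,
\[
\Big|\frac{a_n}{a_{n+1}}\Big|^2 \geq \delta^2 \, \frac{G_n}{G_{n+1}} \geq \frac{\delta^2}{G_{n+1}}.
\]
It therefore suffices to bound $G_{n+1}$ from above uniformly for large $n$, and this is exactly where the standing semi-analyticity hypothesis $\limsup_n |\frac{b_n}{a_{n+1}}| < 1$ from \eqref{eqn: sup of an bn} enters: choosing $r < 1$ and $n_0$ with $|\frac{b_n}{a_{n+1}}| < r$ for $n \geq n_0$ gives $G_{n+1} \leq (1-r^2)^{-1}$ for $n \geq n_0$, whence $|\frac{a_n}{a_{n+1}}| \geq \delta \sqrt{1-r^2} > 0$ for all $n \geq n_0$. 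Since $a_n \neq 0$ for every $n$, the finitely many ratios with $n < n_0$ are automatically bounded away from zero, and taking the overall infimum yields \eqref{eqn: bounded away an/bn}.

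The main obstacle---really the only subtlety---is recognizing that the obvious test vectors $f_n$ point the inequality the wrong way and that one should instead use $z^n$, whose norms are controlled by the geometric-type sum $G_n$; once that observation is in place, the convergence estimate furnished by $\limsup_n |\frac{b_n}{a_{n+1}}| < 1$ closes the argument cleanly.
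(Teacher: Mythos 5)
Your proposal is correct, but your necessity argument takes a genuinely different route from the paper's. The paper proves necessity by working with the Shimorin left inverse $L_{M_z} = (M_z^*M_z)^{-1}M_z^*$: it observes (via the computation preceding Theorem \ref{Prop: L Matrix}, using $\mathbb{C}[z] \subseteq \clh_k$, $L_{M_z}z^n = z^{n-1}$ and $L_{M_z}f_0 = 0$) that the matrix of $L_{M_z}$ has superdiagonal entries $\frac{a_{n+1}}{a_n}$, whence $\|L_{M_z}\| \geq \|L_{M_z}f_n\| \geq |\frac{a_{n+1}}{a_n}|$ and so $|\frac{a_n}{a_{n+1}}| \geq \|L_{M_z}\|^{-1}$ uniformly in $n$. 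You instead bypass the left inverse entirely and test the bounded-below inequality on the monomials $z^n$, computing $\|z^n\|^2 = |a_n|^{-2}G_n$ from \eqref{eq: z^n formula} and taming $G_{n+1}$ by the standing hypothesis $\limsup_n |\frac{b_n}{a_{n+1}}| < 1$ from \eqref{eqn: sup of an bn}; your verification that $G_n = 1 + |\frac{b_n}{a_{n+1}}|^2 G_{n+1}$, that $G_n \geq 1$, and that $G_{n+1} \leq (1-r^2)^{-1}$ for large $n$ are all sound, and your preliminary observation that testing on $f_n$ points the inequality the wrong way (since by \eqref{eq: M_z} the coefficient $\frac{a_n}{a_{n+1}}$ of $f_{n+1}$ is only one orthogonal summand of $M_z f_n$) is exactly right. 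The trade-off: the paper's route yields the uniform constant $\|L_{M_z}\|^{-1}$ in one line but leans on the claim (stated with only a sketch, ``one can show, along the similar line of computation'') that the matrix of $L_{M_z}$ is as in Theorem \ref{Prop: L Matrix}; your route avoids re-deriving that matrix and uses the tridiagonal structure only through the norm formula for $z^n$, at the modest cost of the quantitative geometric-sum estimate and a bound $\delta\sqrt{1-r^2}$ valid only for $n \geq n_0$ (the finitely many remaining ratios being nonzero, hence harmless). Both arguments depend essentially on $\mathbb{C}[z] \subseteq \clh_k$ and on the expansion \eqref{eq: z^n formula}, so the underlying mechanism is shared even though the operators being estimated differ.
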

\begin{proof}
In view of Theorem \ref{thm: L M_z = I} we only need to prove the necessary part. Consider the Shimorin left inverse $L_{M_z} = (M_z^* M_z)^{-1} M_z^*$. Using the fact that $\mathbb{C}[z] \subseteq \clh_k$, one can show, along the similar line of computation preceding Theorem \ref{Prop: L Matrix} (note that, by assumption, $L_{M_z}$ is bounded), that the matrix representation of $L_{M_z}$ with respect to the orthonormal basis $\{f_n\}_{n \geq 0}$ is precisely given by the one in Theorem \ref{Prop: L Matrix}. Then for each $n \geq 0$, we have
\[
\begin{split}
\|(M_z^* M_z)^{-1} M_z^*\|_{\clb(\clh_k)} \geq \|(M_z^* M_z)^{-1} M_z^* f_n\|_{\clh_k} \geq \Big|\frac{a_{n+1}}{a_{n}}\Big|,
\end{split}
\]
which implies that
\[
\Big|\frac{a_n}{a_{n+1}}\Big| \geq \frac{1}{\|(M_z^* M_z)^{-1} M_z^*\|_{\clb(\clh_k)}},
\]
and hence the sequence is bounded away from zero.
\end{proof}

\newsection{Tridiagonal Shimorin models}\label{sect: S kernels and TDS}

As emphasized already in Proposition \ref{Prop:shimorin diagonal} that if $k$ is a diagonal kernel, then ${k_{M_z}}$ is also a diagonal kernel. However, as we will see in the example below, Shimorin kernels are not compatible with tridiagonal kernels. This consequently motivates one to ask: How to determine whether or not the Shimorin kernel $k_{M_z}$ of a tridiagonal kernel $k$ is also tridiagonal? We have a complete answer to this question: $k_{M_z}$ is tridiagonal if and only if $b_0=0$ or that $M_z$ is a weighted shift on $\clh_k$. This is the main content of this section.

\begin{Example}\label{example: TD RKHS fail}
Let $a_n = 1$ for all $n \geq 0$, $b_0 = \frac{1}{2}$, and let $b_n = 0$ for all $n \geq 1$. Let $\clh_k$ denote the analytic tridiagonal space corresponding to the orthonormal basis $\{f_n\}_{n \geq 0}$, where $f_n = (a_n + b_n z) z^n$ for all $n \geq 0$. Since $f_0 = 1 + \frac{1}{2} z$ and $f_n = z^n$ for all $n\geq 1$, by \eqref{eqn: Mz matrix}, we have
\[
[M_z] = \begin{bmatrix}
0& 0 & 0 & 0 & \dots
\\
1 & 0 & 0 & 0 & \ddots
\\
\frac{1}{2} & 1 & 0 & 0 & \ddots
\\
0 & 0 & 1 & 0 &  \ddots
\\
\vdots & \vdots & \vdots& \ddots & \ddots
\end{bmatrix}.
\]
By Theorem \ref{Prop: L Matrix}, the Shimorin left inverse $L_{M_z} = (M_z^*M_z)^{-1}M_z^*$ is given by
\[
L_{M_z} = \begin{bmatrix}
0 & 1 & 0  & 0  & 0 & \dots
\\
0 & \frac{-1}{2} & 1 & 0 & 0 & \ddots
\\
0 & 0 & 0  & 1 & 0 & \ddots
\\
0 & 0 & 0 & 0 & 1 & \ddots
\\
\vdots & \vdots & \vdots & \vdots & \ddots & \ddots
\end{bmatrix}.
\]
Recall, in this case, that $\clw = \mathbb{C} f_0$. It is easy to
check that $L_{M_z} f_1 = f_0 - \frac{1}{2} f_1$, $L_{M_z}^* f_0 = f_1$, $L_{M_z}^* f_1 = - \frac{1}{2} f_1 + f_2$, and $L_{M_z}^* f_2 = f_3$. Then
\[
L_{M_z}^{*3} f_0 = - \frac{1}{2} L_{M_z}^* f_1 + L_{M_z}^* f_2 = \frac{1}{4} f_1 - \frac{1}{2} f_2 + f_3,
\]
and hence $P_{\clw} L_{M_z} L_{M_z}^{*3} f_0 = \frac{1}{4} P_{\clw} (L_{M_z} f_1)$, as $P_{\clw} L_{M_z} f_j = 0$ for all $j \neq 1$. Consequently
\[
P_{\clw} L_{M_z} L_{M_z}^{*3} f_0 = \frac{1}{4} f_0 \neq 0,
\]
which implies that the Shimorin kernel ${k}_{M_z}$, as defined in \eqref{eqn: k_T(z,w)}, is not a tridiagonal kernel.
\end{Example}

Throughout this section, $\clh_k$ will be an analytic tridiagonal space corresponding to the orthonormal basis $\{f_n\}_{n\geq 0}$, where $f_n(z) = (a_n + b_nz) z^n$, $n \geq 0$. Recall that the Shimorin kernel $k_{M_z}: \D \times \D \raro \clb(\clw)$ is given by (see \eqref{eqn: k_T(z,w)} and also Theorem \ref{thm-Shimorin}) 
\[
k_{M_z}(z, w) = P_{\clw} (I - z L_{M_z})^{-1} (I - \bar{w} L_{M_z}^*)^{-1}|_{\clw} \quad \quad (z, w \in \D).
\]
Here, of course, $\clw = \mathbb{C} f_0$, the one-dimensional space generated by the vector $f_0$. So one may regard $k_{M_z}$ as a scalar kernel. We are now ready for the main result of this section.

\begin{Theorem}\label{thm: shimorin classify}
The Shimorin kernel $k_{M_z}$ of $M_z$ is tridiagonal if and only if $M_z$ on $\clh_k$ is a weighted shift or
\[
b_0=0.
\]
\end{Theorem}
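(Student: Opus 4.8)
The plan is to reduce tridiagonality of $k_{M_z}$ to a set of orthogonality relations among the vectors $g_n := (L_{M_z}^*)^n f_0$, $n \geq 0$. Since $\clw = \mathbb{C} f_0$ with $\|f_0\| = 1$, expanding $(I - \bar w L_{M_z}^*)^{-1} f_0 = \sum_{n\geq 0}\bar w^n g_n$ in \eqref{eqn: k_T(z,w)} gives $k_{M_z}(z,w) = \sum_{m,n\geq 0}\langle g_n, g_m\rangle z^m \bar w^n$, so $k_{M_z}$ is tridiagonal if and only if $\langle g_n, g_m\rangle = 0$ whenever $|m - n| \geq 2$. From the matrix of $L_{M_z}$ in Theorem \ref{Prop: L Matrix} I read off the adjoint action $L_{M_z}^* f_0 = \frac{\bar a_1}{\bar a_0} f_1$ and, for $j\ge 1$,
\[
L_{M_z}^* f_j = \frac{\bar a_{j+1}}{\bar a_j} f_{j+1} + \bar d_j f_j + \sum_{i=1}^{j-1}(-1)^{j-i}\bar d_i \frac{\bar b_i \cdots \bar b_{j-1}}{\bar a_{i+1}\cdots\bar a_j} f_i .
\]
Two consequences, used throughout, are immediate: $g_n \in \mathrm{span}\{f_1, \ldots, f_n\}$ for $n \geq 1$ with $f_n$-coefficient equal to $\bar a_n/\bar a_0$, and the coefficient of each $f_i$ with $i \le j$ above is a scalar multiple of $\bar d_i$. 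In particular $\langle g_n, g_0\rangle = 0$ for $n \geq 1$, so the $0$-th row and column of the kernel matrix (off the diagonal) vanish automatically, and only the relations with $m, n \geq 1$ remain.

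For the ``if'' direction I treat the two cases separately. If $M_z$ is a weighted shift, then $d_n = 0$ for all $n \geq 1$ by Lemma \ref{lemma: M_z weighted shift}; the lower terms drop out, the recursion collapses to $g_n = \frac{\bar a_n}{\bar a_0} f_n$, and hence $\langle g_n, g_m\rangle = 0$ for all $m \neq n$ — the Shimorin kernel is in fact diagonal (recovering Proposition \ref{Prop:shimorin diagonal}). If instead $b_0 = 0$, I claim that
\[
g_n = \frac{\bar a_n}{\bar a_0} f_n + \frac{\bar b_{n-1}}{\bar a_0} f_{n-1} \qquad (n \geq 1),
\]
proved by induction on $n$. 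The base case $n = 1$ is exactly $g_1 = \frac{\bar a_1}{\bar a_0} f_1$ and is where $b_0 = 0$ enters. For the step I apply $L_{M_z}^*$ to the displayed expression: the $f_{n+1}$- and $f_n$-coefficients come out to $\bar a_{n+1}/\bar a_0$ and $\frac{\bar a_n}{\bar a_0}(\bar d_n + \frac{\bar b_{n-1}}{\bar a_{n-1}}) = \frac{\bar b_n}{\bar a_0}$ by $d_n = \frac{b_n}{a_n} - \frac{b_{n-1}}{a_{n-1}}$, while for each $1 \leq i \leq n-1$ the two contributions (from the $f_n$- and $f_{n-1}$-terms of $g_n$) carry the same product $\bar b_i \cdots \bar b_{n-1}$ over $\bar a_{i+1}\cdots\bar a_{n-1}$ but opposite signs $(-1)^{n-i}$ and $(-1)^{n-1-i}$, so they cancel. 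Thus $g_n \in \mathrm{span}\{f_{n-1}, f_n\}$, and since $\{n-1, n\}$ and $\{m-1, m\}$ are disjoint when $|m-n| \geq 2$, all the required inner products vanish.

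For the ``only if'' direction I prove the contrapositive: if $b_0 \neq 0$ and $k_{M_z}$ is tridiagonal, then $d_n = 0$ for all $n \geq 1$, i.e. $M_z$ is a weighted shift (Lemma \ref{lemma: M_z weighted shift}). I induct with hypothesis $d_1 = \cdots = d_{n-1} = 0$; together with $b_0 \neq 0$ this forces $b_0/a_0 = \cdots = b_{n-1}/a_{n-1} \neq 0$ by telescoping, so $b_{n-1} \neq 0$. Under this hypothesis the lower terms again vanish, giving $g_n = \frac{\bar a_n}{\bar a_0} f_n$ and $g_{n+1} = \frac{\bar a_{n+1}}{\bar a_0} f_{n+1} + \frac{\bar a_n \bar d_n}{\bar a_0} f_n$, and a short computation of the $f_n$-coefficient of $g_{n+2} = L_{M_z}^* g_{n+1}$, simplified by the identity $a_n d_n - b_n = - a_n b_{n-1}/a_{n-1}$, yields
\[
\langle g_{n+2}, g_n\rangle = -\frac{a_n}{a_0}\,\frac{\bar d_n\, \bar a_n\, \bar b_{n-1}}{\bar a_0\, \bar a_{n-1}}.
\]
Tridiagonality forces this to vanish, and since $b_{n-1} \neq 0$ we get $d_n = 0$; the base case $n=1$ is the same computation with $\langle g_3, g_1\rangle$ and uses only $b_0 \neq 0$. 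The routine part is the bookkeeping for $L_{M_z}^*$ and the two collapsing recursions; the genuinely delicate point is the sign-reversing cancellation of the lower coefficients in the $b_0 = 0$ induction, which is exactly the mechanism by which $b_0 = 0$ (and not some weaker condition) forces tridiagonality. The identities $c_n = -\frac{a_n}{a_{n+2}} d_{n+1}$ from \eqref{eqn: cn=ac dn} and $a_n d_n - b_n = -a_n b_{n-1}/a_{n-1}$ keep the constants in a form where these cancellations are visible.
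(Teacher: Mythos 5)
Your proposal is correct: I verified the adjoint formula for $L_{M_z}^*$ against the matrix in Theorem \ref{Prop: L Matrix}, the two collapsing recursions for $g_n = L_{M_z}^{*n} f_0$, the sign-reversing cancellation in the $b_0 = 0$ induction (including the boundary case $i = n-1$, where the second contribution is the diagonal $\bar{d}_{n-1}$ term), and the inner product $\langle g_{n+2}, g_n \rangle = -\frac{a_n}{a_0} \frac{\bar{d}_n \bar{a}_n \bar{b}_{n-1}}{\bar{a}_0 \bar{a}_{n-1}}$ in the contrapositive step; all are sound, and the reduction of tridiagonality to $\langle g_n, g_m \rangle = 0$ for $|m-n| \geq 2$ is exactly right since $\clw = \mathbb{C}f_0$ is one-dimensional.

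Your route shares the paper's starting reduction (their $X_{mn} = P_{\clw} L^m L^{*n}|_{\clw}$ is your $\langle g_n, g_m\rangle$, and both dispose of the zeroth row and column first) but then diverges in a genuine way. The paper computes closed-form matrix representations of $L^p$ and $L^{*p}$ for every $p$ (see \eqref{eqn:Lp} and \eqref{eqn:L*p}), which amounts to the explicit formula $g_p = \frac{1}{\bar{a}_0}\big(\sum_{n=1}^{p-1} \bar{\beta}_n^{(p)} f_n + \bar{a}_p f_p\big)$ with $\beta_n^{(p)} = a_n(-b_0/a_0)^{p-n-1}\beta_n$ and $\beta_n = \frac{b_n}{a_n} - \frac{b_0}{a_0}$; it then proves the equivalence ``tridiagonal iff $\beta_m^{(m+2)} = 0$ for all $m$'' by induction using the persistence property \eqref{eqn:beta p n = 0}, needs a separate step to kill the entries at distance $\geq 3$, and finally factors $\beta_m^{(m+2)} = a_m(-b_0/a_0)\beta_m$. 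You avoid all of the power-matrix bookkeeping: sufficiency follows from the collapse of $g_n$ to two-term support, which handles every distance $|m-n| \geq 2$ simultaneously and makes the paper's Step 4 unnecessary; necessity follows from a contrapositive induction on $d_n$ that uses only the distance-two entries, with the telescoping $d_1 = \cdots = d_{n-1} = 0 \Rightarrow b_{n-1} \neq 0$ playing the role of the persistence property. The mechanisms are ultimately the same factorization --- your product $\bar{d}_n \bar{b}_{n-1}$ is the paper's $b_0 \beta_m$ in disguise, since $\beta_n = d_1 + \cdots + d_n$ --- but your bookkeeping is leaner and localized to this theorem, whereas the paper's heavier computation of $[L^p]$ and $[L^{*p}]$ amortizes: those formulas are reused verbatim in the truncated-kernel analysis (proof of Theorem \ref{thm: truncated}), which your streamlined argument would not supply.
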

\begin{proof}
We split the proof into several steps.

\NI\textsf{Step 1:} We first denote $L_{M_z} = L$ and
\[
X_{m n} = P_{\clw} L^m L^{*n}|_{\clw} \quad \quad (m, n \geq 0),
\]
for simplicity. First observe that Theorem \ref{Prop: L Matrix} implies that $L^m f_0 = 0$, $m \geq 1$, and hence, $X_{m0} = 0 = X_{m0}^* = X_{0m}$ for all $m \geq 1$. Then the formal matrix representation of the Shimorin kernel $k_{M_z}$ is given by
\begin{equation}\label{eqn:[k Mz]}
[k_{M_z}] =
\begin{bmatrix}
I_{\clw} & 0 & 0  & 0 & \dots
\\
0 & X_{1 1} & X_{1 2} & X_{1 3} &  \dots
\\
0 & X_{12}^* & X_{2 2} & X_{2 3} &  \dots
\\
0 & X_{13}^* & X_{2 3}^* & X_{3 3} &  \dots
\\
\vdots & \vdots & \vdots & \ddots &\ddots
\end{bmatrix}.
\end{equation}
Clearly, in view of the above, $k_{M_z}$ is tridiagonal if and only if $X_{m n} f_0 = 0$ for all $m, n \neq 0$ and $|m-n| \geq 2$.

\NI\textsf{Step 2:} In this step we aim to compute matrix representations of $L^p$ and $L^{*p}$, $p \geq 1$, with respect to the orthonormal basis $\{f_n\}_{n\geq 0}$. The matrix representation of $[L]$ in Theorem \ref{Prop: L Matrix} is instructive. It also follows that
\begin{equation}\label{eqn:[L*]}
[L^*] =
\begin{bmatrix}
0 & 0 & 0  & 0 & 0 & \dots
\\
\frac{\bar{a}_1}{\bar{a}_0} & \bar{d}_1 & \frac{-\bar{d}_1 \bar{b}_1}{\bar{a}_2} & \frac{\bar{d}_1 \bar{b}_1 \bar{b}_2}{\bar{a}_2 \bar{a}_3} & \frac{- \bar{d}_1 \bar{b}_1 \bar{b}_2 \bar{b}_3}{\bar{a}_2 \bar{a}_3 \bar{a}_4} &  \dots
\\
0 & \frac{\bar{a}_2}{\bar{a}_1} & \bar{d}_2 & \frac{-\bar{d}_2 \bar{b}_2}{\bar{a}_3} & \frac{\bar{d}_2 \bar{b}_2 \bar{b}_3}{\bar{a}_3 \bar{a}_4} &  \ddots
\\
0 & 0 & \frac{\bar{a}_3}{\bar{a}_2} & \bar{d}_3 & \frac{-\bar{d}_3 \bar{b}_3}{\bar{a}_4} &  \ddots
\\
0 & 0 & 0 & \frac{\bar{a}_4}{\bar{a}_3} & \bar{d}_4 &  \ddots
\\
0 & 0 & 0 & 0 & \frac{\bar{a}_5}{\bar{a}_4} & \ddots
\\
\vdots & \vdots & \vdots & \vdots &\ddots & \ddots
\end{bmatrix}.
\end{equation}
Here we redo the construction taking into account the general $p \geq 1$, and proceed as in the proof of Theorem \ref{Prop: L Matrix}. However, the proof is by no means the same and the general case is quite involved. Assume that $n \geq 1$. We need to consider two cases: $n \geq p$ and $n \leq p-1$. Suppose $n \geq p$. By \eqref{eqn:L_Mz back shift} and \eqref{eqn:L_1}, we have
\[
L^p f_n  = a_n L^p z^n + b_n L^p z^{n+1} = a_n z^{n-p} + b_ n z^{n-p+1},
\]
which implies
\[
L^p f_n  = \frac{a_n}{a_{n-p}}(a_{n-p}z^{n-p} + b_{n-p}z^{n-p+1})+(b_n-\frac{a_n}{a_{n-p}}b_{n-p})z^{n-p+1} = \frac{a_n}{a_{n-p}} f_{n-p} + d_n^{(p)} z^{n-p+1},
\]
where
\begin{equation}\label{eqn: d^p_n}
d_n^{(p)}= b_n-\frac{a_n}{a_{n-p}}b_{n-p} \quad\quad (n \geq p).
\end{equation}
Hence by \eqref{eq: z^n formula}
\[
L^p f_n  = \frac{a_n}{a_{n-p}} f_{n-p} + \frac{d_n^{(p)}}{a_{n-p+1}} \Big(f_{n-p+1} - \frac{b_{n-p+1}}{a_{n-p+2}} f_{n-p+2} + \frac{b_{n-p+1} b_{n-p+2}}{a_{n-p+2}a_{n-p+3}} f_{n-p+3} - \cdots \Big),
\]
that is
\[
L^p f_n  = \frac{a_n}{a_{n-p}} f_{n-p} + \frac{d_n^{(p)}}{a_{n-p+1}} \sum_{m=0}^{\infty} (-1)^m \Big(\frac{\prod_{j=0}^{m-1} b_{n - p+j+1}}{\prod_{j=0}^{m-1} a_{n - p+j+2}}\Big) f_{n-p+m+1},
\]
for all $n \geq p$. Here and in what follows, we define $\prod_{j=0}^{-1} x_j := 1$.

\NI We now let $p=1$ and $n=1$. Then by Theorem \ref{Prop: L Matrix}, we have
\begin{equation}\label{eqn:L f1}
L f_1 = \frac{a_1}{a_0} f_0 + d_1 f_1 + (- \frac{d_1 b_1}{a_2}) f_2 + (\frac{d_1 b_1 b_2}{a_2 a_3}) f_3 + \cdots.
\end{equation}

\NI Finally, let $1 \leq n \leq p-1$. Then $p > 1$, and again by \eqref{eqn:L_Mz back shift} and \eqref{eqn:L_1}, we have
\[
L^p f_n = L^p(a_n z^n+b_n z^{n+1}) = a_n L^{p-n} 1 + b_n L^{p-n-1} 1 = a_n \Big(\frac{-b_0}{a_0}\Big)^{p-n} + b_n \Big(\frac{-b_0}{a_0}\Big)^{p-n-1},
\]
and hence $L^p f_n = a_n \Big(\frac{-b_0}{a_0}\Big)^{p-n-1}\Big[\frac{b_n}{a_n}-\frac{b_0}{a_0}\Big]$. We set
\begin{equation}\label{eqn: beta n}
\beta_n = \frac{b_n}{a_n}-\frac{b_0}{a_0} \quad \quad (n \geq 1),
\end{equation}
and
\begin{equation}\label{eqn: beta n p}
\beta_n^{(p)} =
a_n \Big(\frac{-b_0}{a_0}\Big)^{p-n-1} \beta_n  \quad \quad (1 \leq n \leq p-1).
\end{equation}
Then $L^p f_n = \beta_n^{(p)}$ and \eqref{eq: z^n formula} implies that
\[
L^p(f_n) = \frac{\beta_n^{(p)}}{a_0} \sum_{m=0}^\infty (-1)^m \Big(\frac{\Pi_{j=0}^{m-1} b_j}{\Pi_{j=0}^{m-1} a_{j+1}}\Big) f_m,
\]
for all $1 \leq n \leq p-1$. Then
\begin{equation}\label{eqn:[L2]}
[L^2] =
\begin{bmatrix}
0 & \frac{\beta^{(2)}_1}{a_0} & \frac{a_2}{a_0} & 0 & 0 & \dots
\\
0 & - \frac{\beta^{(2)}_1 b_0}{a_0 a_1} & \frac{d_2^{(2)}}{a_1} & \frac{a_3}{a_1} & 0 & \ddots
\\
0 & \frac{\beta^{(2)}_1 b_0 b_1}{a_0 a_1 a_2} & - \frac{d_2^{(2)}b_1}{a_1 a_2} & \frac{d_3^{(2)}}{a_2} & \frac{a_4}{a_2} & \ddots
\\
0 & - \frac{\beta^{(2)}_1 b_0 b_1 b_2}{a_0 a_1 a_2 a_3} & \frac{d_2^{(2)}b_1 b_2}{a_1 a_2 a_3} & - \frac{d_3^{(2)} b_2}{a_2 a_3} & \frac{d_4^{(2)}}{a_3} & \ddots
\\
\vdots & \vdots & \vdots & \vdots & \ddots & \ddots
\end{bmatrix},
\end{equation}
and in general, for each $p \geq 2$, we have
\begin{equation}\label{eqn:Lp}
[L^p] = \begin{bmatrix}
0 & \frac{ \beta_1^{(p)}}{a_0} & \frac{\beta_2^{(p)}}{a_0} & \cdots  &  \frac{\beta_{p-1}^{(p)}}{a_0} & \frac{a_p}{a_0} & 0 & 0 & \cdots
\\
0 & - \frac{ \beta_1^{(p)}b_0}{a_0a_1} & - \frac{ \beta_2^{(p)}b_0}{a_0a_1} & \cdots  &  -\frac{\beta_{p-1}^{(p)}b_0}{a_0a_1} & \frac{d_p^{(p)}}{a_1}   & \frac{a_{p+1}}{a_1} & 0 & \ddots
\\
0 & \frac{ \beta_1^{(p)}b_0b_1}{a_0a_1a_2} & \frac{ \beta_2^{(p)}b_0b_1}{a_0a_1a_2} & \cdots  &  \frac{\beta_{p-1}^{(p)}b_0b_1}{a_0a_1a_2} & -\frac{d_{p}^{(p)}b_1}{a_1a_2} & \frac{d_{p+1}^{(p)}}{a_2} & \frac{a_{p+2}}{a_2} & \ddots
\\
0 & -\frac{ \beta_1^{(p)}b_0 b_1 b_2}{a_0a_1a_2 a_3} & - \frac{\beta_2^{(p)}b_0 b_1 b_2}{a_0 a_1 a_2 a_3} & \cdots  & -\frac{\beta_{p-1}^{(p)}b_0b_1 b_2}{a_0a_1a_2 a_3} & \frac{d_{p}^{(p)}b_1 b_2}{a_1a_2 a_3} & - \frac{d_{p+1}^{(p)}b_2}{a_2 a_3} & \frac{d^{(p)}_{p+2}}{a_3} & \ddots
\\
\vdots & \vdots & \vdots &\vdots &\vdots &\vdots &\vdots &\ddots &\ddots
\end{bmatrix}.
\end{equation}
Hence, for each $p \geq 2$, we have
\begin{equation}\label{eqn:L*p}
[L^{*p}] = \begin{bmatrix}
0 & 0 & 0 & 0 & \dots
\\
\frac{\bar{\beta}_1^{(p)}}{\bar{a}_0} & - \frac{\bar{\beta}_1^{(p)} \bar{b}_0}{\bar{a}_0 \bar{a}_1}  & \frac{\bar{\beta}_1^{(p)} \bar{b}_0 \bar{b}_1}{\bar{a}_0 \bar{a}_1 \bar{a}_2} & -\frac{\bar{\beta}_1^{(p)} \bar{b}_0 \bar{b}_1 \bar{b}_2}{\bar{a}_0 \bar{a}_1 \bar{a}_2 \bar{a}_3} & \ddots
\\
\frac{\bar{\beta}_2^{(p)}}{\bar{a}_0} & - \frac{\bar{\beta}_2^{(p)} \bar{b}_0}{\bar{a}_0 \bar{a}_1}  & \frac{\bar{\beta}_2^{(p)} \bar{b}_0 \bar{b}_1}{\bar{a}_0 \bar{a}_1 \bar{a}_2} & -\frac{\bar{\beta}_2^{(p)} \bar{b}_0 \bar{b}_1 \bar{b}_2}{\bar{a}_0 \bar{a}_1 \bar{a}_2 \bar{a}_3}  & \ddots
\\
\vdots & \vdots & \vdots & \vdots  & \ddots
\\
\frac{\bar{\beta}_{p-1}^{(p)}}{\bar{a}_0} & - \frac{\bar{\beta}_{p-1}^{(p)} \bar{b}_0}{\bar{a}_0 \bar{a}_1}  & \frac{\bar{\beta}_{p-1}^{(p)} \bar{b}_0 \bar{b}_1}{\bar{a}_0 \bar{a}_1 \bar{a}_2} & -\frac{ \bar{\beta}_{p-1}^{(p)} \bar{b}_0 \bar{b}_1 \bar{b}_2}{\bar{a}_0 \bar{a}_1 \bar{a}_2 \bar{a}_3} & \ddots
\\
\frac{\bar{a}_p}{\bar{a}_0} &  \frac{\bar{d}_p^{(p)}}{\bar{a}_1} & - \frac{\bar{d}_{p}^{(p)} \bar{b}_1}{\bar{a}_1\bar{a}_2} & \frac{\bar{d}_{p}^{(p)} \bar{b}_1 \bar{b}_2}{\bar{a}_1 \bar{a}_2 \bar{a}_3} & \ddots
\\
0 & \frac{\bar{a}_{p+1}}{\bar{a}_1} & \frac{\bar{d}_{p+1}^{(p)}}{\bar{a}_2} & -\frac{\bar{d}_{p+1}^{(p)} \bar{b}_2}{\bar{a}_2 \bar{a}_3} & \ddots
\\
0 & 0 & \frac{\bar{a}_{p+2}}{\bar{a}_2} & \frac{\bar{d}_{p+2}^{(p)}}{\bar{a}_3} & \ddots
\\
\vdots & \vdots & \vdots & \vdots & \ddots
\end{bmatrix}.
\end{equation}

\NI\textsf{Step 3:} We now identify condition on the sequence $\{\beta^{(n+2)}_{n}\}_{n\geq 1}$ implied by the requirement that $X_{m, m+2} = 0$, $m \geq 1$. Before proceeding further, we record here the following crucial observation: Suppose $\beta^{(p)}_n=0$ for some $p$ and $n$ such that $1 \leq n \leq p-1$. Then by \eqref{eqn: beta n p}, we have
\begin{equation}\label{eqn:beta p n = 0}
\beta^{(q)}_n=0 \quad \quad (q \geq p).
\end{equation}
Now assume $m \geq 1$. The matrix representation in \eqref{eqn:L*p} implies
\begin{equation}\label{eqn:L*(n+2)f0}
L^{* m+2} f_0 = \frac{1}{\bar{a}_0}\Big(\bar{\beta}_1^{(m+2)} f_1 + \bar{\beta}_2^{(m+2)} f_2 + \cdots + \bar{\beta}_{m+1}^{(m+2)} f_{m+1} +  \bar{a}_{m+2} f_{m+2}\Big).
\end{equation}
Observe that, by Theorem \ref{Prop: L Matrix}, we have
\[
P_{\clw} L(f_i) =
\begin{cases}
\frac{a_1}{a_0} f_0 & \mbox{if}~ i=1
\\
0 & \mbox{if}~ i \neq 1.
\end{cases}
\]
Let us now assume that $m \geq 2$. Then \eqref{eqn:Lp} implies
\begin{equation}\label{eqn:L*n fj}
P_{\clw} L^m(f_i) =
\begin{cases}
\frac{\beta^{(m)}_{i}}{a_0} f_0 & \mbox{if}~ 1 \leq i \leq m-1
\\
\frac{a_m}{a_0} f_0 & \mbox{if}~ i=m
\\
0 & \mbox{if}~ i \geq m+1. \end{cases}
\end{equation}
Since $X_{m, m+2} = P_{\clw}L^m L^{* m+2}|_{\clw}$, this yields
\begin{equation}\label{eqn:LnL*n+2f0}
X_{m, m+2}f_0 = \frac{1}{|{a}_0|^2} \Big(\bar{\beta}_1^{(m+2)} \beta_1^{(m)} + \bar{\beta}_2^{(m+2)} \beta_2^{(m)} + \cdots + \bar{\beta}_{m-1}^{(m+2)} \beta_{m-1}^{(m)} + \bar{\beta}_{m}^{(m+2)} a_m \Big) f_0.
\end{equation}
In particular, if $m=1$, then we have
\[
X_{13} f_0 = \frac{1}{\bar{a}_0} \Big(\bar{\beta}^{(3)}_1 \frac{a_1}{a_0}\Big) f_0,
\]
and hence $X_{13} = 0$ if and only if $\beta^{(3)}_1 = 0$. By \eqref{eqn:LnL*n+2f0}, applied with $m=2$ we have
\[
X_{24} f_0 = \frac{1}{|a_0|^2} \Big(\bar{\beta}_{1}^{(4)} \beta_{1}^{(2)} + \bar{\beta}_{2}^{(4)} a_2\Big) f_0.
\]
Assume that $\beta^{(3)}_1 = 0$. By \eqref{eqn:beta p n = 0}, we have $\beta_{1}^{(4)} = 0$, and, consequently
\[
X_{24} f_0 = \bar{\beta}_{2}^{(4)} \frac{a_2}{|a_0|^2} f_0.
\]
Hence we obtain $X_{24} = 0$ if and only if $\beta^{(4)}_2 = 0$. Therefore, if $X_{m,m+2} = 0$ for all $m \geq 1$, then by induction, it follows that $\beta^{(m+2)}_{m} = 0$ for all $m \geq 1$. The converse also follows from the above computation.

Thus we have proved: \textit{$X_{m,m+2} = 0$ for all $m \geq 1$ if and only if $\beta^{(m+2)}_{m} = 0$ for all $m \geq 1$.}

\smallskip

\NI\textsf{Step 4:} Our aim is to prove the following claim: \textit{Suppose $X_{i, i+2} = 0$ for all $i=1, \ldots, m$, and $m \geq 1$. Then $X_{m n} = 0$ for all $n = m+3, m+4, \ldots$, and $m \geq 1$.}

\noindent To this end, let $n = m+j$ and $j \geq 3$. Then the matrix representation in \eqref{eqn:L*p} (or the equality \eqref{eqn:L*(n+2)f0}) implies
\[
L^{*n} f_0 = \frac{1}{\bar{a}_0}\Big(\bar{\beta}_1^{(n)} f_1 + \bar{\beta}_2^{(n)} f_2 + \cdots + \bar{\beta}_{n-1}^{(n)} f_{n-1} +  \bar{a}_{n} f_{n}\Big),
\]
and then
\[
\begin{split}
P_{\clw}L^m L^{*n} f_0 = \Big(\frac{1}{\bar{a}_0} \sum_{i=1}^{n-1} \bar{\beta}_i^{(n)} P_{\clw} L^m (f_i)\Big) + \frac{\bar{a}_{n}}{\bar{a}_0} P_{\clw} L^m f_n = \frac{1}{\bar{a}_0} \sum_{i=1}^{m} \bar{\beta}_i^{(n)} P_{\clw} L^m (f_i),
\end{split}
\]
since $P_{\clw} L^m f_i = 0$, $i > m$, which follows from the matrix representation of $L^m$ in \eqref{eqn:Lp}. Hence by \eqref{eqn:L*n fj} (or directly from \eqref{eqn:Lp}), we have
\[
P_{\clw} L^m L^{*n} f_0 = \frac{1}{|a_0|^2} \Big(\bar{\beta}_1^{(n)} \beta_1^{(m)} + \bar{\beta}_2^{(n)} \beta_2^{(m)} + \cdots + \bar{\beta}_{m-1}^{(n)} \beta_{m-1}^{(m)} +  {a}_{m} \bar{\beta}_{m}^{(n)}\Big) f_0.
\]
Now note that $X_{i, i+2} = 0$, that is, $\beta_i^{(i+2)} = 0$, $i= 1, \ldots,m$, by assumption. Since $i+2 \leq m+j$ for all $i=1, \ldots, m$, by \eqref{eqn:beta p n = 0}, we have
\[
\beta_i^{(n)} = \beta_i^{(m+j)} = 0 \quad \quad (i =1, \ldots, m).
\]
Hence $P_{\clw} L^m L^{*n} f_0 = 0$, that is, $X_{m, m+i} = 0$, $i= 3, 4, \ldots$, which proves the claim.

\NI\textsf{Step 5:} So far all we have proved is that $X_{mn} = 0$ for all $|m-n| \geq 2$ if and only if $\beta_m^{(m+2)} = 0$ for all $m \geq 1$. Now, by \eqref{eqn: beta n p} and \eqref{eqn: beta n}, we have
\[
\beta_n^{(n+2)} = a_n \Big(- \frac{b_0}{a_0}\Big) \beta_n,
\]
where $\beta_n = \frac{b_n}{a_n} - \frac{b_0}{a_0}$ for all $n \geq 1$. Thus $\beta_n^{(n+2)} = 0$ for all $n \geq 1$ if and only if $b_0 = 0$ or $\beta_n = 0$ for all $n \geq 1$. On the other hand, Lemma \ref{lemma: M_z weighted shift} implies that $\beta_n = 0$ for all $n \geq 1$ if and only if $M_z$ is a weighted shift.

\NI Finally, by Proposition \ref{Prop:shimorin diagonal}, we know that if $M_z$ is a left-invertible weighted shift, then the Shimorin kernel is also a diagonal kernel. This completes the proof of Theorem \ref{thm: shimorin classify}.
\end{proof}

\newsection{Positive operators and tridiagonal kernels}\label{sect: Positive and TDK}

Our aim is to classify positive operators $P$ on a tridiagonal space $\clh_k$ such that
\[
\D \times \D \ni(z, w) \mapsto \BL  P k(\cdot, w) , k(\cdot, z) \BR_{\clh_k},
\]
is also a tridiagonal kernel. While this problem is of independent interest, the motivation for our interest in this question also comes from Theorem \ref{thm: revisit model} (also see the paragraph preceding Corollary \ref{cor: M TD implies M TD}). We start with a simple example.

\begin{Example}
We consider the same example as in Example \ref{example: TD RKHS fail}. Note that $M_z$ is left-invertible and not a weighted shift with respect to the orthonormal basis $\{f_n\}_{n\geq 0}$ of $\clh_k$. Then by Lemma \ref{prop: L tilde T}, we have
\[
|M_z|^{-2} = L_{M_z} L_{M_z}^* =
\begin{bmatrix}
1 & -\frac{1}{2} & 0  & 0 & \dots
\\
-\frac{1}{2} & \frac{5}{4} & 0 & 0 & \ddots
\\
0 & 0 & 1 & 0 & \ddots
\\
0 & 0 & 0 & 1 & \ddots
\\
\vdots & \vdots & \vdots &\vdots &\ddots
\end{bmatrix}.
\]
Let
\[
|M_z|^{-1}= \begin{bmatrix}
\alpha & \beta & 0  & 0  & \dots
\\
\beta & \gamma & 0 & 0 & \ddots
\\
0 & 0 & 1  & 0 & \ddots
\\
0 & 0 & 0 & 1 &  \ddots
\\
\vdots & \vdots & \vdots &\vdots  &\ddots
\end{bmatrix},
\]
where $\begin{bmatrix}\alpha & \beta \\ \beta & \gamma \end{bmatrix}$ is the positive square root of $\begin{bmatrix} 1 & -\frac{1}{2}\\ -\frac{1}{2} & \frac{5}{4}\end{bmatrix}$. A straightforward calculation shows that $\frac{\alpha}{2} + \beta \neq 0$. Define $K : \D \times \D \raro \mathbb{C}$ by
\[
K(z,w) = \BL |M_z|^{-1} k(\cdot, w) , k(\cdot, z) \BR_{\clh_k} \quad \quad (z, w \in \D).
\]
A simple computation then shows that
\[
K(z,w)=\alpha+(\frac{\alpha}{2} + \beta)\bar{w}+(\frac{\alpha}{2} + \beta) z + (\frac{\alpha}{4} + \beta + \gamma) z \bar{w} + \sum_{n\geq 2} z^n \bar{w}^n ,
\]
that is, $K$ is also a tridiagonal kernel.
\end{Example}

The following is a complete classification of positive operators $P$ for which $(z, w) \mapsto \BL  P k(\cdot, w) , k(\cdot, z) \BR_{\clh_k}$ defines a  tridiagonal kernel.

\begin{Theorem}\label{thm: P kernel and Mz}
Let $\clh_k$ be a tridiagonal space corresponding to the orthonormal basis $f_n(z) = (a_n + b_nz)z^n$, $n \geq 0$. Let $P$ be a positive operator on $\clh_k$ with matrix representation
\[
P = \begin{bmatrix}
c_{00}& c_{01} & c_{02} & c_{03} & \dots
\\
\bar{c}_{01} & c_{11} & c_{12} & c_{13} & \ddots
\\
\bar{c}_{02} & \bar{c}_{12} & c_{22} & c_{23}  & \ddots
\\
\bar{c}_{03} & \bar{c}_{13} & \bar{c}_{23} & c_{33} &
\ddots
\\
\vdots & \vdots & \vdots & \ddots & \ddots
\\
\end{bmatrix},
\]
with respect to the basis $\{f_n\}_{n \geq 0}$. Then the positive definite scalar kernel $K$, defined by
\[
K(z,w) = \BL P k(\cdot, w) ,k(\cdot, z) \BR_{\clh_k} \quad\quad (z, w \in \D),
\]
is tridiagonal if and only if
\[
c_{0n} = (-1)^{n-1} \frac{\bar{b}_1 \cdots \bar{b}_{n-1}}{\bar{a}_2 \cdots \bar{a}_n} c_{01} \quad \quad (n \geq 2),
\]
and
\[
c_{mn} = (-1)^{n-m-1} \frac{\bar{b}_{m+1} \cdots \bar{b}_{n-1}}{\bar{a}_{m+2} \cdots \bar{a}_n} c_{m, m+1} \qquad (1 \leq m \leq n-2).
\]
Equivalently, $K$ is tridiagonal if and only if
\[
P = \begin{bmatrix}
c_{00} & c_{01} & -\frac{\bar{b_1}}{\bar{a_2}}c_{01} & \frac{\bar{b}_1 \bar{b}_2}{\bar{a}_2 \bar{a}_3} c_{01} & \dots
\\
\bar{c}_{01} & c_{11} & c_{12}  &-\frac{\bar{b}_2} {\bar{a}_3}c_{12} & \ddots
\\
-\frac{b_1}{a_2}\bar{c}_{01} & \bar{c}_{12} & c_{22} &c_{23} & \ddots
\\
\frac{b_1b_2}{a_2a_3}\bar{c}_{01} & -\frac{b_2}{a_3}\bar{c}_{12} & \bar{c}_{23} & c_{33} & \ddots
\\
\vdots & \vdots & \vdots & \ddots & \ddots
\end{bmatrix}.
\]
\end{Theorem}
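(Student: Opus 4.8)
The plan is to expand $K(z,w)$ as a double power series in $z$ and $\bar w$ and read off the condition that its off-band coefficients vanish. Writing $p_{mn} = \langle P f_n, f_m \rangle$ for the $(m,n)$ entry of $P$ (so that $p_{mn} = c_{mn}$ when $m \le n$ and $p_{mn} = \bar c_{nm}$ when $m > n$), and using $k(\cdot, w) = \sum_n \overline{f_n(w)} f_n$, I would first obtain
\[
K(z,w) = \sum_{m,n \ge 0} p_{mn}\, f_m(z)\, \overline{f_n(w)}.
\]
Substituting $f_m(z) = a_m z^m + b_m z^{m+1}$ and $\overline{f_n(w)} = \bar a_n \bar w^n + \bar b_n \bar w^{n+1}$ and collecting the coefficient $C_{pq}$ of $z^p \bar w^q$ gives the four-term expression
\[
C_{pq} = a_p \bar a_q\, p_{pq} + a_p \bar b_{q-1}\, p_{p,q-1} + b_{p-1} \bar a_q\, p_{p-1,q} + b_{p-1} \bar b_{q-1}\, p_{p-1,q-1},
\]
with the convention that any term carrying a negative subscript is omitted. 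This bookkeeping is where care is required, since each of the four monomials in $f_m(z)\overline{f_n(w)}$ feeds into a shifted coefficient.

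Next I would reduce tridiagonality to a one-sided family of equations. Because $P$ is self-adjoint, $K(z,w) = \overline{K(w,z)}$, hence $C_{pq} = \overline{C_{qp}}$; so $K$ is tridiagonal precisely when $C_{pq} = 0$ for all $q \ge p+2$. For such indices all four matrix entries in $C_{pq}$ lie in the upper triangle, so $p_{pq} = c_{pq}$, and so on, and the vanishing condition becomes
\[
a_p\big(c_{pq}\bar a_q + c_{p,q-1}\bar b_{q-1}\big) + b_{p-1}\big(c_{p-1,q}\bar a_q + c_{p-1,q-1}\bar b_{q-1}\big) = 0 \qquad (q \ge p+2),
\]
where the second group is absent when $p = 0$.

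The heart of the argument is an induction on the row index $p$ showing that this system is equivalent to the single in-row recurrence $c_{pq} = -\frac{\bar b_{q-1}}{\bar a_q} c_{p,q-1}$ for every $q \ge p+2$. For $p = 0$ the $b_{p-1}$ terms drop out and, dividing by $a_0 \ne 0$, the recurrence is immediate; iterating it from $q = 2$ yields the stated product formula for $c_{0q}$. For the inductive step I would invoke the row-$(p-1)$ recurrence, which is exactly the statement $c_{p-1,q}\bar a_q + c_{p-1,q-1}\bar b_{q-1} = 0$ for $q \ge p+1$; since $q \ge p+2$, this makes the entire second group above vanish, and dividing the remainder by $a_p \ne 0$ produces the row-$p$ recurrence. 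Iterating from $q = p+2$, with the free parameter being the superdiagonal entry $c_{p,p+1}$, gives the claimed formula for $c_{pq}$. The converse is the same computation run backwards: if every row obeys its in-row recurrence, both grouped factors vanish and $C_{pq} = 0$.

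The main obstacle is precisely the decoupling exploited in the inductive step: a priori the equation $C_{pq}=0$ couples row $p$ to row $p-1$, and the crucial point is that the inductive hypothesis forces the cross-row contribution $b_{p-1}\big(c_{p-1,q}\bar a_q + c_{p-1,q-1}\bar b_{q-1}\big)$ to cancel identically, leaving a clean per-row recurrence with no surviving dependence on $b_{p-1}$. Once this is established, translating the per-row recurrences back into matrix entries reproduces the explicit matrix for $P$ displayed in the statement, and the diagonal and superdiagonal entries remain free (subject only to $P \ge 0$), completing the proof.
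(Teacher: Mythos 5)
Your proposal is correct and follows essentially the same route as the paper's proof: expand $K(z,w)=\sum_{m,n} p_{mn} f_m(z)\overline{f_n(w)}$, read off the four-term coefficient of $z^p\bar w^q$ (the paper's $\alpha_{mn} = a_m(\bar a_n c_{mn} + \bar b_{n-1}c_{m,n-1}) + b_{m-1}(\bar a_n c_{m-1,n} + \bar b_{n-1}c_{m-1,n-1})$), use Hermitian symmetry to restrict to $q \ge p+2$, and induct on the row index, with the cross-row group vanishing by the row-$(p-1)$ recurrence exactly as in the paper. Your decoupling observation in the inductive step is precisely the paper's argument (carried out there explicitly for $m=1,2$ and then by induction), so there is nothing to correct.
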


\begin{proof}
Note, for each $w \in \D$, by \eqref{eqn intro: k(z,w)}, we have $k(\cdot, w) = \sum_{m=0}^\infty \overline{f_m(w)} f_m$, and thus
\[
P k(\cdot, w) = \sum_{m=0}^{\infty} (\sum_{n=0}^{m-1} \bar{c}_{nm} \overline{f_n(w)} + \sum_{n=m}^{\infty} c_{mn} \overline{f_n(w)}) f_m,
\]
where $\sum_{n=0}^{-1} x_n :=0$. Then
\[
\begin{split}
\BL P k(\cdot, w) ,k(\cdot, z) \BR_{\clh_k} & = \sum_{m=0}^{\infty} f_m(z) (\sum_{n=0}^{m-1} \bar{c}_{nm} \overline{f_n(w)} + \sum_{n=m}^{\infty} c_{mn} \overline{f_n(w)})
\\
& = \sum_{m=0}^{\infty} (a_m z^m + b_m z^{m+1}) (\sum_{n=0}^{m-1} \bar{c}_{nm} (\bar{a}_n \bar{w}^n + \bar{b}_n \bar{w}^{n+1})
\\
& \quad \quad + \sum_{n=m}^{\infty} c_{mn} (\bar{a}_n \bar{w}^n + \bar{b}_n \bar{w}^{n+1}))
\\
& = \sum_{m,n \geq 0} \alpha_{mn} z^m \bar{w}^n,
\end{split}
\]
where $\alpha_{mn}$ denotes the coefficient of $z^m \bar{w}^n$, $m, n \geq 0$. Our interest here is to compute $\alpha_{mn}$, $|m-n| \geq 2$. Clearly, $\alpha_{mn} = \bar{\alpha}_{nm}$ for all $m, n \geq 0$, and
\begin{equation}\label{eqn:alpha 0n}
\alpha_{0n} = a_0 (\bar{a}_n c_{0 n} + \bar{b}_{n-1} c_{0,n-1}) \quad \quad (n \geq 2),
\end{equation}
and
\begin{equation}\label{eqn:alpha mn}
\alpha_{mn} = a_m\Big(\bar{a}_n c_{mn} + \bar{b}_{n-1} c_{m, n-1} \Big) + b_{m-1} \Big( \bar{a}_n c_{m-1, n} + \bar{b}_{n-1} c_{m-1, n-1} \Big)
\quad \quad (1 \leq m<n).
\end{equation}
Suppose $n \geq 2$. By \eqref{eqn:alpha 0n}, $\alpha_{0n}=0$ if and only if $c_{0n} = - \frac{\bar{b}_{n-1}}{\bar{a}_n} c_{0, n-1}$. In particular, if $n=2$, then $c_{02} = -\frac{\bar{b}_1}{\bar{a}_2}c_{01}$, and hence, by \eqref{eqn:alpha 0n} again, we have
\[
c_{0n} = (-1)^{n-1} \frac{\prod_{i=1}^{n-1} \bar{b}_i}{\prod_{i=2}^n \bar{a}_i} c_{01} \quad \quad (n \geq 2).
\]
Therefore, $\alpha_{0n}=0$ for all $n \geq 2$ if and only if the above identity hold for all $n \geq 2$.

\NI Next we want to consider the case $m, n \neq 0$ and $|m-n| \geq 2$. Assume that $n \geq 3$. Then \eqref{eqn:alpha mn} along with \eqref{eqn:alpha 0n} implies
\[
\alpha_{1n} = a_1(\bar{a}_n c_{1n} + \bar{b}_{n-1} c_{1, n-1}) + b_{0}(\bar{a}_n c_{0n} + \bar{b}_{n-1} c_{0, n-1}) = a_1(\bar{a}_n c_{1n} + \bar{b}_{n-1} c_{1, n-1}) + \frac{b_{0}}{a_0} \alpha_{0n}.
\]
Therefore, if $\alpha_{0n}=0$ for all $n \geq 3$, then $\alpha_{1n} = a_1(\bar{a}_n c_{1n} + \bar{b}_{n-1} c_{1, n-1})$. Hence $\alpha_{1n} = 0$ if and only if $\bar{a}_n c_{1n} + \bar{b}_{n-1} c_{1, n-1} = 0$, which is equivalent to
\[
c_{1n} = - \frac{\bar{b}_{n-1}}{\bar{a}_n} c_{1, n-1}.
\]
Therefore, under the assumption that $\alpha_{1n} = 0$ and $n \geq 4$, \eqref{eqn:alpha mn} along with \eqref{eqn:alpha 0n} implies
\[
\alpha_{2n} = a_2(\bar{a}_n c_{2n} + \bar{b}_{n-1} c_{2, n-1}) + b_1 (\bar{a}_n c_{1n} + \bar{b}_{n-1} c_{1, n-1}) = a_2 (\bar{a}_n c_{2n} + \bar{b}_{n-1} c_{2, n-1}).
\]
Then $\alpha_{2n} = 0$, $n \geq 4$, if and only if $c_{2n} = - \frac{\bar{b}_{n-1}}{\bar{a}_n} c_{2, n-1}$. Consequently, by induction, for all $m, n \neq 0$ and $|m-n| \geq 2$, we have that $\alpha_{mn} = 0$ if and only if $\bar{a}_n c_{mn} + \bar{b}_{n-1} c_{m, n-1} = 0$, or equivalently
\[
c_{mn} = - \frac{\bar{b}_{n-1}}{\bar{a}_n} c_{m, n-1}.
\]
Finally, observe that $c_{mn} = (-1)^{n-m-1} \frac{\bar{b}_{n-1} \cdots \bar{b}_{m+1}}{\bar{a}_n \cdots \bar{a}_{m+2}} c_{m, m+1}$ for all $1 \leq m \leq n-2$. This completes the proof of the theorem.
\end{proof}

We will return to this in  Theorem \ref{thm: truncated} and Corollary \ref{cor:truncated SK = SK}.

\newsection{Quasinormal operators}\label{sect: quasi normal}

A bounded linear operator $T \in \clb(\clh)$ is said to be \textit{quasinormal} if $T^* T$ and $T$ commutes, that is
\[
[T^*, T]T = 0,
\]
where $[T^*, T] = T^* T - T T^*$ is the commutator of $T$. In this section, we present a complete classification of quasinormality of $M_z$ on analytic tridiagonal spaces. Here, however, we do not need to assume that $M_z$ is left-invertible.

To motivate our result on quasinormality, we first consider the known case of weighted shifts. Recall that the weighted shift $S_{\alpha}$ corresponding to the weight sequence (of positive real numbers) $\{\alpha_n\}_{n \geq 0}$ is given by $S_{\alpha} e_n = \alpha_n e_{n+1}$ for all $n \geq 0$. Then (see the proof of Proposition \ref{Prop:shimorin diagonal})
\[
S_{\alpha} S_{\alpha}^* e_{n+1} = \alpha_n^2 e_{n+1},
\]
and hence $(S_{\alpha}^* S_{\alpha} - S_{\alpha} S_{\alpha}^*) S_{\alpha} = 0$ if and only if $(S_{\alpha}^* S_{\alpha} - S_{\alpha} S_{\alpha}^*) S_{\alpha} e_n = 0$ for all $n\geq 0$, which is equivalent to
\[
\alpha_n (\alpha_{n+1}^2 - \alpha_n^2) = 0,
\]
for all $n$. Thus, we have proved \cite[Problem 139]{Halmos}:

\begin{Lemma}\label{Lemma: normal S_alpha}
The weighted shift $S_{\alpha}$ is quasinormal if and only if the weight sequence $\{\alpha_n\}_{n \geq 0}$ is a constant sequence.
\end{Lemma}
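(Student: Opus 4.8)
The plan is to test the defining identity $[S_{\alpha}^*, S_{\alpha}] S_{\alpha} = 0$ directly on the orthonormal basis $\{e_n\}_{n \geq 0}$, exploiting the fact that the self-commutator of a weighted shift is diagonal. First I would record the elementary action of $S_{\alpha}$ and its adjoint, namely $S_{\alpha} e_n = \alpha_n e_{n+1}$ and $S_{\alpha}^* e_n = \alpha_{n-1} e_{n-1}$ for $n \geq 1$ with $S_{\alpha}^* e_0 = 0$. From these (as already computed in the proof of Proposition \ref{Prop:shimorin diagonal}) one gets $S_{\alpha}^* S_{\alpha} e_n = \alpha_n^2 e_n$ for all $n \geq 0$, while $S_{\alpha} S_{\alpha}^* e_n = \alpha_{n-1}^2 e_n$ for $n \geq 1$ and $S_{\alpha} S_{\alpha}^* e_0 = 0$. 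Consequently the self-commutator acts diagonally, with $[S_{\alpha}^*, S_{\alpha}] e_0 = \alpha_0^2 e_0$ and $[S_{\alpha}^*, S_{\alpha}] e_n = (\alpha_n^2 - \alpha_{n-1}^2) e_n$ for $n \geq 1$.

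The second step is to compose with $S_{\alpha}$ on the right. Since $S_{\alpha} e_n = \alpha_n e_{n+1}$, applying the diagonal operator above to $e_{n+1}$ yields $[S_{\alpha}^*, S_{\alpha}] S_{\alpha} e_n = \alpha_n (\alpha_{n+1}^2 - \alpha_n^2) e_{n+1}$ for every $n \geq 0$. Because the vectors $\{e_{n+1}\}_{n \geq 0}$ are orthonormal, the operator $[S_{\alpha}^*, S_{\alpha}] S_{\alpha}$ vanishes if and only if the scalar $\alpha_n (\alpha_{n+1}^2 - \alpha_n^2)$ vanishes for each $n$.

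The final step invokes the standing assumption that the weights $\alpha_n$ are (strictly) positive real numbers. Dividing out $\alpha_n \neq 0$, the vanishing condition collapses to $\alpha_{n+1}^2 = \alpha_n^2$, and positivity upgrades this to $\alpha_{n+1} = \alpha_n$ for all $n$; that is, $\{\alpha_n\}_{n \geq 0}$ is constant. The converse is immediate: if the sequence is constant then each difference $\alpha_{n+1}^2 - \alpha_n^2$ is zero, so $[S_{\alpha}^*, S_{\alpha}] S_{\alpha} = 0$ and $S_{\alpha}$ is quasinormal. I do not anticipate any genuine obstacle here, as the argument is a short basis computation; the only point requiring a word of care is the passage from $\alpha_{n+1}^2 = \alpha_n^2$ to $\alpha_{n+1} = \alpha_n$, which is exactly where positivity of the weights is used (and which would fail for signed or complex weights).
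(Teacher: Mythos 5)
Your proof is correct and follows essentially the same route as the paper: testing $[S_{\alpha}^*, S_{\alpha}]S_{\alpha} = 0$ on the orthonormal basis, reducing quasinormality to the scalar condition $\alpha_n(\alpha_{n+1}^2 - \alpha_n^2) = 0$, and using positivity of the weights to conclude the sequence is constant. Your explicit remark on where positivity enters is a slightly more careful articulation of a step the paper leaves implicit, but the argument is the same.
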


Now we turn to $M_z$ on a semi-analytic tridiagonal space $\clh_k$. Suppose $[M_z^*, M_z] = r P_{f_0}$, where $r$ is a non-negative real number and $P_{f_0}$ denote the orthogonal projection of $\clh_k$ onto the one dimensional space $\mathbb{C} f_0$. Then $[M_z^*, M_z]M_z = r P_{f_0} M_z$ implies that
\[
([M_z^*, M_z]M_z) f_n = r P_{f_0} (z f_n).
\]
Now by \eqref{eq: M_z} we have
\[
z f_n = \sum_{i=n+1}^\infty \beta_i f_i,
\]
for some scalar $\beta_i \in \mathbb{C}$, $i \geq n+1$. Note that $\beta_{n+1} = \frac{a_n}{a_{n+1}} \neq 0$. This shows that $P_{f_0} (z f_n) = 0$, and hence
\[
([M_z^*, M_z]M_z) f_n = 0 \quad \quad (n\geq 0),
\]
that is, $M_z$ is quasinormal. Conversely, assume that $M_z$ is a non-normal and quasinormal operator. Then $[M_z^*, M_z]M_z = 0$ implies that $\mbox{ran} M_z \subseteq \ker [M_z^*, M_z]$, and therefore, by Lemma \ref{lemma: ker Mz*}, we have
\[
\mathbb{C} f_0 = \ker M_z^* \supseteq \overline{ran} [M_z^*, M_z].
\]
Clearly this implies $[M_z^*, M_z] = r P_{f_0}$ for some non-zero scalar $r$. Then
\[
r \|f_0\|^2 = \langle r P_{f_0} f_0, f_0 \rangle_{\clh_k} =  \langle [M_z^*, M_z]f_0, f_0 \rangle_{\clh_k} = \|M_z f_0\|^2 - \|M_z^* f_0\|^2 = \|M_z f_0\|^2,
\]
as $M_z^* f_0 = 0$, which implies
\[
r = \frac{\|M_z f_0\|^2}{\|f_0\|^2} > 0.
\]
Thus, we have proved:

\begin{Theorem}\label{thm:quasinormal}
Let $\clh_k$ be a semi-analytic tridiagonal space. Assume that $M_z$ is a non-normal operator on $\clh_k$. Then $M_z$ is quasinormal if and only if there exists a positive real number $r$ such that
\[
M_z^* M_z - M_z M_z^* = r P_{f_0},
\]
where $P_{f_0}$ denote the orthogonal projection of $\clh_k$ onto the one dimensional space $\mathbb{C} f_0$.
\end{Theorem}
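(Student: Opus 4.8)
The plan is to prove both implications by analyzing the self-adjoint commutator $[M_z^*, M_z] = M_z^* M_z - M_z M_z^*$ directly, exploiting the fact that the wandering subspace $\ker M_z^* = \mathbb{C} f_0$ is one-dimensional (Lemma \ref{lemma: ker Mz*}). The quasinormality condition $[M_z^*, M_z] M_z = 0$ is the pivot throughout.

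For sufficiency, I would assume $[M_z^*, M_z] = r P_{f_0}$ and verify the defining identity $[M_z^*, M_z] M_z = 0$. Since $[M_z^*, M_z] = r P_{f_0}$, it suffices to show $P_{f_0} M_z = 0$, equivalently $P_{f_0}(z f_n) = 0$ for every $n \geq 0$. Expanding $M_z f_n = z f_n$ via \eqref{eq: M_z} exhibits it as a linear combination of $\{f_i\}_{i \geq n+1}$, so it carries no $f_0$-component and $P_{f_0} M_z f_n = 0$. Hence $[M_z^*, M_z] M_z = r P_{f_0} M_z = 0$, so $M_z$ is quasinormal.

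For necessity, assume $M_z$ is non-normal and quasinormal, so $[M_z^*, M_z] M_z = 0$, which reads $\text{ran}\, M_z \subseteq \ker [M_z^*, M_z]$. Passing to orthogonal complements and using self-adjointness of the commutator, the closure of its range lies in $(\text{ran}\, M_z)^\perp = \ker M_z^* = \mathbb{C} f_0$ by Lemma \ref{lemma: ker Mz*}. A self-adjoint operator whose range is contained in the one-dimensional space $\mathbb{C} f_0$ must be of the form $r P_{f_0}$ for some real scalar $r$; non-normality forces $[M_z^*, M_z] \neq 0$, so $r \neq 0$. Testing on $f_0$ and using $M_z^* f_0 = 0$ then gives
\[
r \|f_0\|^2 = \langle [M_z^*, M_z] f_0, f_0 \rangle_{\clh_k} = \|M_z f_0\|^2 - \|M_z^* f_0\|^2 = \|M_z f_0\|^2 > 0,
\]
since injectivity of the shift yields $M_z f_0 \neq 0$; thus $r > 0$.

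The main obstacle is the passage, in the necessity direction, from the inclusion $\text{ran}\, M_z \subseteq \ker [M_z^*, M_z]$ to the sharp conclusion that $[M_z^*, M_z]$ equals a strictly positive scalar times $P_{f_0}$. The key point is that self-adjointness of the commutator upgrades the a priori rank-at-most-one information into an honest real multiple of the orthogonal projection $P_{f_0}$, while non-normality together with injectivity of $M_z$ excludes the degenerate case $r = 0$.
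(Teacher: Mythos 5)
Your proposal is correct and follows essentially the same route as the paper's proof: sufficiency via $P_{f_0}(zf_n)=0$ from the expansion \eqref{eq: M_z}, and necessity via $\mathrm{ran}\, M_z \subseteq \ker[M_z^*,M_z]$, self-adjointness, and Lemma \ref{lemma: ker Mz*} to force $[M_z^*,M_z]=rP_{f_0}$, with $r>0$ extracted by testing on $f_0$. You merely make explicit two steps the paper glosses (that a self-adjoint operator with range in $\mathbb{C}f_0$ is a real multiple of $P_{f_0}$, and that injectivity of $M_z$ gives $\|M_zf_0\|^2>0$), which is fine.
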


In more algebraic terms this result can be formulated as follows: First we recall the matrix representation of $M_z$ (see \eqref{eqn: Mz matrix})
\[
[M_z] = \begin{bmatrix}
0& 0 & 0 & 0 & \dots
\\
\frac{a_0}{a_1} & 0 & 0 & 0 & \ddots
\\
{c_0} & \frac{a_1}{a_2} & 0 & 0 & \ddots
\\
\frac{-c_0 b_2}{a_3} & c_1 & \frac{a_2}{a_3} & 0 & \ddots
\\
\frac{c_0b_2b_3}{a_3a_4} &\frac{-c_1b_3}{a_4} & c_2 & \frac{a_3}{a_4} & \ddots
\\
\frac{-c_0b_2b_3b_4}{a_3a_4a_5} &\frac{c_1b_3b_4}{a_4a_5} & \frac{-c_2b_4}{a_5} & c_3 & \ddots
\\
\vdots & \ddots & \ddots&\ddots &\ddots
\end{bmatrix}.
\]
For each $n \geq 0$, we denote by $R_n$ and $C_n$ the $n$-th row and $n$-th column, respectively, of $[M_z]$. We then identify each of these column and row vectors with elements in $\clh_k$. Then $R_n, C_n \in \clh_k$, $n \geq 0$. Using the matrix representation $[M_z^*]$ (see \eqref{eqn: Mz^* matrix}) and $[M_z]$, we get
\[
\langle R_0, R_n \rangle_{\clh_k} = 0,
\]
for all $n \geq0$, and, consequently
\[
\Big[[M_z^*, M_z]\Big]
= \begin{bmatrix}
\langle C_0, C_0 \rangle_{\clh_k} & \langle C_1,C_0 \rangle_{\clh_k} & \langle C_2,C_0 \rangle_{\clh_k} & \cdots
\\
\langle C_0 ,C_1 \rangle_{\clh_k} & \langle C_1,C_1 \rangle_{\clh_k} - \langle R_1, R_1 \rangle_{\clh_k} & \langle C_2, C_1 \rangle_{\clh_k} - \langle R_1, R_2 \rangle_{\clh_k}  & \cdots
\\
\langle C_0 ,C_2 \rangle_{\clh_k} & \langle C_1, C_2 \rangle_{\clh_k} - \langle R_2,R_1 \rangle_{\clh_k} & \langle C_2,C_2 \rangle_{\clh_k} - \langle R_2,R_2 \rangle_{\clh_k} & \cdots
\\
\vdots & \vdots & \vdots & \ddots
\end{bmatrix}.
\]
Therefore:

\begin{Corollary}
Let $\clh_k$ be a semi-analytic tridiagonal space. Then $M_z$ on $\clh_k$ is quasinormal if and only if $\langle C_0,C_0\rangle_{\clh_k} = r$ and
\[
\langle C_0, C_i\rangle_{\clh_k} = 0 \quad \quad (i \geq 1),
\]
and
\[
\langle C_n,C_m \rangle_{\clh_k} - \langle R_m,R_n \rangle_{\clh_k} =0,
\]
for all $1 \leq m \leq n$.
\end{Corollary}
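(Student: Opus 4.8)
The plan is to obtain the corollary as a direct transcription of Theorem~\ref{thm:quasinormal}: I would translate the operator identity $M_z^* M_z - M_z M_z^* = r P_{f_0}$ into entrywise equations by comparing the matrix of the commutator $[M_z^*, M_z]$ displayed just above with the matrix of $r P_{f_0}$, and then read off the three stated conditions.

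First I would observe that $M_z$ is automatically non-normal on $\clh_k$, so that Theorem~\ref{thm:quasinormal} applies without any extra hypothesis: since $C_0 = M_z f_0 = a_0 z + b_0 z^2 \not\equiv 0$, we have $\langle C_0, C_0 \rangle_{\clh_k} = \|M_z f_0\|^2 > 0$, whence $[M_z^*, M_z] \neq 0$. By that theorem, $M_z$ is quasinormal if and only if $[M_z^*, M_z] = r P_{f_0}$ for some $r > 0$. As $\{f_n\}_{n \geq 0}$ is orthonormal and $P_{f_0}$ is the rank-one orthogonal projection onto $\mathbb{C} f_0$, the matrix of $r P_{f_0}$ in this basis has $(0,0)$ entry equal to $r$ and every other entry equal to $0$.

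Next I would compare this, entry by entry, with the matrix of $[M_z^*, M_z]$ recorded above. The identities $(M_z^* M_z)_{mn} = \langle M_z f_n, M_z f_m \rangle_{\clh_k} = \langle C_n, C_m \rangle_{\clh_k}$ and $(M_z M_z^*)_{mn} = \langle M_z^* f_n, M_z^* f_m \rangle_{\clh_k} = \langle R_m, R_n \rangle_{\clh_k}$, together with $R_0 = 0$ (equivalently $M_z^* f_0 = 0$, so that $\langle R_0, R_n \rangle_{\clh_k} = 0$ for all $n$), show that the $(0,0)$ entry is $\langle C_0, C_0 \rangle_{\clh_k}$, the remaining zeroth-row and zeroth-column entries are $\langle C_i, C_0 \rangle_{\clh_k}$ and $\langle C_0, C_i \rangle_{\clh_k}$ for $i \geq 1$, and the entries with $m, n \geq 1$ are $\langle C_n, C_m \rangle_{\clh_k} - \langle R_m, R_n \rangle_{\clh_k}$. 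Matching against $r P_{f_0}$ forces $\langle C_0, C_0 \rangle_{\clh_k} = r$, then $\langle C_0, C_i \rangle_{\clh_k} = 0$ for $i \geq 1$ (using $\langle C_i, C_0 \rangle_{\clh_k} = \overline{\langle C_0, C_i \rangle_{\clh_k}}$), and finally $\langle C_n, C_m \rangle_{\clh_k} - \langle R_m, R_n \rangle_{\clh_k} = 0$ for all $m, n \geq 1$.

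The only point requiring a word is the reduction of this last family to the range $1 \leq m \leq n$: because $[M_z^*, M_z]$ is self-adjoint, its $(n,m)$ entry is the complex conjugate of its $(m,n)$ entry, so vanishing for all $m, n \geq 1$ is equivalent to vanishing merely for $1 \leq m \leq n$. The converse is immediate, as the three conditions reconstitute the commutator matrix as $r P_{f_0}$ with $r = \langle C_0, C_0 \rangle_{\clh_k} > 0$, and Theorem~\ref{thm:quasinormal} then returns quasinormality. I expect no genuine obstacle: the substantive computation (the matrix of the commutator, together with $R_0 = 0$) has already been carried out, and what remains is the indexing bookkeeping and the self-adjointness symmetry.
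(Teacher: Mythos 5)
Your proof is correct and follows essentially the same route as the paper: reading the entries of the displayed matrix of $[M_z^*, M_z]$ (with $\langle R_0, R_n\rangle_{\clh_k} = 0$) against those of $r P_{f_0}$ and invoking Theorem~\ref{thm:quasinormal}. You additionally make explicit two points the paper leaves tacit --- that $M_z$ is automatically non-normal here (via $M_z^* f_0 = 0$ and $M_z f_0 \neq 0$), so the theorem's hypothesis is satisfied, and that self-adjointness of the commutator reduces the conditions to the range $1 \leq m \leq n$ --- which is a welcome tightening, not a deviation.
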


It is easy to see that a quasinormal operator is always subnormal \cite{Halmos}. However, a complete classification of subnormality of $M_z$ on tridiagonal spaces is rather more subtle and not quite as clear-cut as in the quasinormal situation. In fact the general classification of subnormality of $M_z$ on tridiagonal spaces is not known (however, see \cite{Adam 2013}).

\newsection{Aluthge transforms of shifts}\label{sect: AT of Mz}

Recall that the \textit{Aluthge transform} of an operator $T \in \clb(\clh)$ is the bounded linear operator
\[
\tT = |T|^{\frac{1}{2}} U |T|^{\frac{1}{2}}.
\]
In this section, we prove that the Aluthge transform of a left-invertible shift on an analytic Hilbert space is again an explicit shift on some analytic Hilbert space. We present two approaches to this problem, one based on Shimorin's analytic models of left-invertible operators and one is based on rather direct reproducing kernel Hilbert space techniques.

We begin with the following simple fact concerning Aluthge transforms of left-invertible operators:

\begin{Lemma}\label{lemma: tilde T}
If $T$ is a left-invertible operator on $\clh$, then
\[
\tT = |T|^{\frac{1}{2}} T |T|^{- \h},
\]
and $\ker \tT^* = |T|^{-\h} \ker T^*$. In particular, $\tT$ is similar to $T$.
\end{Lemma}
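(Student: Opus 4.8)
The plan is to exploit left-invertibility to rewrite the partial isometry in the polar decomposition as an explicit expression in $T$ and $|T|$, substitute it into the definition of $\tT$, and then read off all three conclusions from the resulting closed form.

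First I would record that, since $T$ is left-invertible, it is bounded below, so that $T^*T$—and hence the positive operator $|T| = (T^*T)^{\h}$ together with its positive square root $|T|^{\h}$—is invertible. In particular $\ker T = \{0\}$, so the unique partial isometry $U$ in $T = U|T|$ is in fact an isometry. More to the point, multiplying $T = U|T|$ on the right by $|T|^{-1}$ yields the identity $U = T|T|^{-1}$, which is the only real input the argument needs.

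Next I would substitute this into the definition of the Aluthge transform:
\[
\tT = |T|^{\h} U |T|^{\h} = |T|^{\h}\big(T|T|^{-1}\big)|T|^{\h} = |T|^{\h}\, T\, |T|^{-\h},
\]
which is the first asserted identity. Writing $S = |T|^{\h}$, which is invertible, this reads $\tT = S T S^{-1}$, so $\tT$ is similar to $T$; this disposes of the final (``in particular'') claim at once. For the wandering subspace I would then pass to adjoints: since $|T|^{\pm \h}$ are self-adjoint, $\tT^* = |T|^{-\h} T^* |T|^{\h}$. For $x \in \clh$, injectivity of $|T|^{-\h}$ gives $\tT^* x = 0$ iff $T^*|T|^{\h} x = 0$, i.e. iff $|T|^{\h} x \in \ker T^*$, i.e. iff $x \in |T|^{-\h}\ker T^*$. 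Hence $\ker \tT^* = |T|^{-\h}\ker T^*$, as required.

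There is no deep obstacle here: the whole statement is essentially bookkeeping resting on the single observation $U = T|T|^{-1}$, which is available precisely because left-invertibility forces $|T|$ to be invertible. The only points demanding care are the justification of $U = T|T|^{-1}$ from the polar decomposition in the invertible case, and tracking the self-adjointness of the functional-calculus factors $|T|^{\pm\h}$ when taking adjoints; both are routine.
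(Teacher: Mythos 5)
Your proof is correct and follows essentially the same route as the paper: both arguments hinge on the invertibility of $|T|$ (equivalently of $T^*T$) to rewrite $\tT = |T|^{\h} U |T|^{\h}$ as $|T|^{\h}(U|T|)|T|^{-\h} = |T|^{\h} T |T|^{-\h}$, from which similarity and the kernel identity follow. Your explicit adjoint computation for $\ker \tT^*$ merely fills in the step the paper dismisses with ``the second equality follows from the first.''
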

\begin{proof}
Indeed, $\tT = |T|^{\h} U |T|^{\h} = |T|^{\h} (U |T|) |T|^{-\h} = |T|^{\frac{1}{2}} T |T|^{- \h}$, as $T^*T$ is invertible. The second equality follows from the first.
\end{proof}

Suppose in addition that $T$ is a shift on an analytic Hilbert space. In Theorem \ref{thm: left analytic model} (under an additional assumption that $T$ is analytic), and then in Theorem \ref{thm: revisit model} again, we prove that $\tT$, up to unitary equivalence, is also a shift on an explicit analytic Hilbert space. In connection with Lemma \ref{prop: L tilde T}, we now prove the following:

\begin{Proposition}\label{prop: L tilde T new}
If $T$ is a left-invertible operator on $\clh$, then the Shimorin left inverse $L_{\tT}$ of the Aluthge transform $\tT$ is given by
\[
L_{\tT} = |T|^{\h} \Big((L_T |T| T)^{-1} L_T\Big) |T|^{\h} = |T|^{\h} \Big((T^* |T| T)^{-1} T^*\Big) |T|^{\h}.
\]
\end{Proposition}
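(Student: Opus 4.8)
The plan is to reduce everything to the similarity formula $\tT = |T|^{\h} T |T|^{-\h}$ supplied by Lemma \ref{lemma: tilde T}, and then to compute the Shimorin left inverse $L_{\tT} = (\tT^* \tT)^{-1} \tT^*$ by direct manipulation. Writing $A = |T|^{\h}$, which is positive and invertible because $T^* T$ is invertible, we have $\tT = A T A^{-1}$, and since $A$ is self-adjoint, $\tT^* = A^{-1} T^* A$.

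First I would record that $\tT$ is left-invertible, so that $L_{\tT}$ is well defined: if $X T = I_{\clh}$, then $(A X A^{-1}) \tT = A X T A^{-1} = I_{\clh}$, whence $\tT^* \tT$ is invertible. Next I would compute $\tT^* \tT = A^{-1} T^* A \cdot A T A^{-1} = A^{-1} (T^* |T| T) A$, using $A^2 = |T|$. Inverting gives $(\tT^* \tT)^{-1} = A (T^* |T| T)^{-1} A$, where $T^* |T| T = (A T)^* (A T)$ is invertible since $A T$ is bounded below.

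Assembling the pieces,
\[
L_{\tT} = (\tT^* \tT)^{-1} \tT^* = A (T^* |T| T)^{-1} A \cdot A^{-1} T^* A = |T|^{\h} (T^* |T| T)^{-1} T^* |T|^{\h},
\]
which is exactly the second displayed expression. To then recover the first expression I would rewrite the middle factor using $L_T = (T^* T)^{-1} T^*$: from $L_T |T| T = (T^* T)^{-1} T^* |T| T$ one gets $(L_T |T| T)^{-1} = (T^* |T| T)^{-1} (T^* T)$, and therefore
\[
(L_T |T| T)^{-1} L_T = (T^* |T| T)^{-1} (T^* T)(T^* T)^{-1} T^* = (T^* |T| T)^{-1} T^*,
\]
so the two candidate formulas for $L_{\tT}$ coincide.

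The computation is entirely routine; the only points requiring care are the self-adjointness of $A = |T|^{\h}$ (so that $(A^{-1})^* = A^{-1}$ in the passage to $\tT^*$) and the invertibility of $T^* |T| T$, both of which follow at once from the invertibility of $T^* T$. I do not anticipate a genuine obstacle here, the content being a bookkeeping exercise in moving the invertible factor $|T|^{\pm \h}$ through the definition of the Shimorin left inverse.
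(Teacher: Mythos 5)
Your proposal is correct and follows essentially the same route as the paper: both compute $\tT^* \tT = |T|^{-\h}(T^*|T|T)|T|^{-\h}$ from $\tT = |T|^{\h} T |T|^{-\h}$, invert to obtain $L_{\tT} = |T|^{\h}(T^*|T|T)^{-1}T^*|T|^{\h}$, and then identify the two middle factors via the elementary relation between $L_T = (T^*T)^{-1}T^*$ and $T^*$ (the paper uses $T^* = |T|^2 L_T$, you use the equivalent $(L_T|T|T)^{-1} = (T^*|T|T)^{-1}(T^*T)$). Your added remark that $T^*|T|T = (|T|^{\h}T)^*(|T|^{\h}T)$ is invertible because $|T|^{\h}T$ is bounded below is a slightly more explicit justification than the paper's bare assertion, but it is not a different argument.
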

\begin{proof}
Note that by Lemma \ref{lemma: tilde T}, we have $\tT^* \tT = |T|^{-\h} (T^* |T| T) |T|^{-\h}$. Since $T^* |T| T$ is invertible, it follows that $(\tT^* \tT)^{-1} = |T|^{\h} (T^* |T| T)^{-1} |T|^{\h}$. Then
\[
L_{\tT} = (\tT^* \tT)^{-1} \tT^* = (|T|^{\h} (T^* |T| T)^{-1} |T|^{\h}) |T|^{-\h} T^* |T|^{\h} = |T|^{\h} \Big((T^* |T| T)^{-1} T^*\Big) |T|^{\h}.
\]
On the other hand, since $T^* = |T|^2 L_T$, we have $T^* |T| T = |T|^2 L_T |T| T$, and hence
\[
(T^* |T| T)^{-1} = (L_T |T| T)^{-1} |T|^{-2}.
\]
Therefore, $(\tT^* \tT)^{-1} = |T|^{\h} (L_T |T| T)^{-1} |T|^{-\frac{3}{2}}$, which gives
\[
L_{\tT} = (\tT^* \tT)^{-1} \tT^* = |T|^{\h} (L_T |T| T)^{-1} |T|^{-2} (T^* |T|^{\h}) = |T|^{\h} (L_T |T| T)^{-1} L_T |T|^{\h},
\]
and completes the proof.
\end{proof}

Then the above, along with Theorem \ref{thm-Shimorin} and Lemma \ref{lemma: tilde T} implies the following:

\begin{Theorem}\label{thm: left analytic model}
Let $\cle$ be a Hilbert space, and let $k : \D \times \D \raro \clb(\cle)$ be an analytic kernel. Suppose $M_z$ is left-invertible on $\clh_k$. Then the Aluthge transform $\tilde{M_z}$ is unitarily equivalent to the shift $M_z$ on $\clh_{\tilde{k}} \subseteq \clo(\D, \tilde \clw)$, where
\[
\tilde{k}(z, w) = P_{\tilde \clw} (I - z {L})^{-1} (I - \bar{w} {L}^*)^{-1}|_{\tilde \clw} \quad \quad (z, w \in \D),
\]
and $\tilde \clw = \ker \tilde M_z^* = |M_z|^{-\h} \ker M_z^*$, and
\[
{L} = |M_z|^{\h} ((L_{M_z} |M_z| M_z)^{-1} L_{M_z}) |M_z|^{\h}.
\]
\end{Theorem}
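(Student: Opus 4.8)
The plan is to recognize the statement as nothing more than Shimorin's model, Theorem \ref{thm-Shimorin}, applied directly to the operator $\tilde{M_z}$, with every computational ingredient already isolated in Lemma \ref{lemma: tilde T} and Proposition \ref{prop: L tilde T new}. Thus the work reduces to checking that $\tilde{M_z}$ satisfies the two hypotheses of Theorem \ref{thm-Shimorin}---that it is left-invertible and analytic---and then reading off the three pieces of data $\tilde\clw$, $L$, and the kernel formula from the results already in hand.

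First I would settle left-invertibility. By Lemma \ref{lemma: tilde T} we have $\tilde{M_z} = |M_z|^{\h} M_z |M_z|^{-\h}$, so $\tilde{M_z}$ is similar to $M_z$ through the bounded invertible operator $|M_z|^{\h}$ (invertibility of $|M_z|$ is precisely the left-invertibility of $M_z$). Since similarity preserves left-invertibility---if $X M_z = I_{\clh_k}$, then $\big(|M_z|^{\h} X |M_z|^{-\h}\big)\tilde{M_z} = I_{\clh_k}$---the operator $\tilde{M_z}$ is left-invertible. For analyticity, recall that because $\clh_k \subseteq \clo(\D, \cle)$ the shift $M_z$ is analytic (the observation recorded just after \eqref{eqn: cap Xn H =0}), so $\bigcap_n M_z^n \clh_k = \{0\}$. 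The point is that analyticity transfers along the similarity: since $\tilde{M_z}^n = |M_z|^{\h} M_z^n |M_z|^{-\h}$ and $|M_z|^{-\h}\clh_k = \clh_k$, one has $\tilde{M_z}^n \clh_k = |M_z|^{\h} M_z^n \clh_k$, and because $|M_z|^{\h}$ is injective it commutes with intersections, giving $\bigcap_n \tilde{M_z}^n \clh_k = |M_z|^{\h}\bigcap_n M_z^n \clh_k = \{0\}$. Hence $\tilde{M_z}$ is analytic.

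With both hypotheses verified, Theorem \ref{thm-Shimorin} applies to $\tilde{M_z}$ and produces a unitary equivalence between $\tilde{M_z}$ on $\clh_k$ and the shift $M_z$ on $\clh_{k_{\tilde{M_z}}}$, where $k_{\tilde{M_z}}(z,w) = P_{\tilde\clw}(I - z L_{\tilde{M_z}})^{-1}(I - \bar{w} L_{\tilde{M_z}}^*)^{-1}|_{\tilde\clw}$, with $\tilde\clw = \ker \tilde{M_z}^*$ and $L_{\tilde{M_z}}$ the Shimorin left inverse of $\tilde{M_z}$. It then only remains to substitute the explicit descriptions of these objects. Lemma \ref{lemma: tilde T} gives $\tilde\clw = \ker \tilde{M_z}^* = |M_z|^{-\h}\ker M_z^*$, and Proposition \ref{prop: L tilde T new}, applied with $T = M_z$, gives $L_{\tilde{M_z}} = |M_z|^{\h}\big((L_{M_z}|M_z|M_z)^{-1}L_{M_z}\big)|M_z|^{\h} = L$. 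Setting $\tilde{k} = k_{\tilde{M_z}}$ reproduces exactly the asserted formula, completing the proof.

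Since every algebraic identity has been pushed into the preceding lemma and proposition, there is no genuinely hard calculation left. The one step demanding care is the verification that $\tilde{M_z}$ is analytic: one must not assume that Theorem \ref{thm-Shimorin} applies to $\tilde{M_z}$ merely because it applies to $M_z$, even though the two operators are similar. Once the similarity is used to pass the chain of ranges, and hence the intersection, through the invertible factor $|M_z|^{\h}$, the remainder of the argument is a direct assembly of cited results.
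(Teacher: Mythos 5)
Your proof is correct and takes essentially the same route as the paper, whose entire argument is to apply Theorem \ref{thm-Shimorin} to $\tilde{M}_z$, with Lemma \ref{lemma: tilde T} supplying $\tilde{\clw} = \ker \tilde{M}_z^* = |M_z|^{-\h}\ker M_z^*$ and Proposition \ref{prop: L tilde T new} (with $T = M_z$) supplying $L_{\tilde{M}_z} = L$. Your explicit checks that left-invertibility and analyticity transfer to $\tilde{M}_z$ through the similarity $\tilde{M}_z = |M_z|^{\h} M_z |M_z|^{-\h}$ (using injectivity of $|M_z|^{\h}$ to pass the intersection $\bigcap_n \tilde{M}_z^n \clh_k$ through) are correct and simply make explicit what the paper leaves implicit.
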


\begin{Definition}
The kernel $\tilde{k}$ is called the \textit{Shimorin-Aluthge kernel} of ${M}_z$.
\end{Definition}

Under some additional assumptions on scalar-valued analytic kernels, we now prove that, up to similarity and a perturbation of an operator of rank at most one, $L_{\tilde{M}_z}$ and $L_{M_z}$ are the same. As far as concrete examples are concerned, these assumptions are indispensable and natural (cf. Lemma \ref{lemma: ker Mz*}).

\begin{Theorem}\label{thm: rank one perturb}
Let $k : \D \times \D \raro \mathbb{C}$ be an analytic kernel, $\mathbb{C}[z] \subseteq \clh_k$, and let $\{f_n\} \subseteq \mathbb{C}[z]$ be an orthonormal basis of $\clh_k$. Assume that $M_z$ on $\clh_k$ is left-invertible, $\ker M_z^* = \mathbb{C} f_0$, and
\[
f_n \in \mbox{span} \{z^m : m \geq 1\} \qquad (n \geq 1).
\]
Then $L_{\tilde{M}_z}$ and $L_{M_z}$ are similar up to the perturbation of an operator of rank at most one.
\end{Theorem}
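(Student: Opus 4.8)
The plan is to use the similarity $\tilde{M}_z = |M_z|^{\h}\,M_z\,|M_z|^{-\h}$ supplied by Lemma \ref{lemma: tilde T}, and then to compare the Shimorin left inverse $L_{\tilde{M}_z}$ against the most obvious left inverse of $\tilde{M}_z$ built from $L_{M_z}$. Throughout I write $M = M_z$, $L = L_{M_z}$, $|M| = (M^*M)^{\h}$, $S = |M|^{\h}$ (invertible, since $M$ is bounded below), and let $P_{\clw} = I - ML$ be the rank-one orthogonal projection onto $\clw = \ker M_z^* = \mathbb{C} f_0$ (see \eqref{eqn:P_W = I- TL}). Because $\tilde{M}_z = SMS^{-1}$ and $LM = I_{\clh_k}$ by Theorem \ref{thm: L M_z = I}, the conjugate $SLS^{-1}$ satisfies $(SLS^{-1})\tilde{M}_z = S(LM)S^{-1} = I$, so it is a left inverse of $\tilde{M}_z$; and $L_{\tilde{M}_z}$ is a left inverse of $\tilde{M}_z$ by definition. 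The assertion of the theorem is exactly that these two left inverses differ by an operator of rank at most one, that is, $S^{-1}L_{\tilde{M}_z}S = L_{M_z} + G$ with $\mathrm{rank}\,G \le 1$.

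The conceptual reason this must hold is that any two bounded left inverses $X_1,X_2$ of a bounded-below operator $T$ satisfy $(X_1-X_2)T = 0$, so $X_1-X_2$ annihilates the closed subspace $\mathrm{ran}\,T$ and hence factors through $\clh_k/\mathrm{ran}\,T$, a space of dimension $\dim\ker T^*$. With $T = \tilde{M}_z$, Lemma \ref{lemma: tilde T} gives $\ker\tilde{M}_z^* = |M|^{-\h}\ker M_z^* = |M|^{-\h}\mathbb{C} f_0$, which is one-dimensional; hence $L_{\tilde{M}_z} - SLS^{-1}$ already has rank at most one.

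To pin down $G$ explicitly — which is what later refinements will use — I would instead start from the formula $L_{\tilde{M}_z} = |M|^{\h}\,(L|M|M)^{-1}L\,|M|^{\h}$ of Proposition \ref{prop: L tilde T new}. Setting $B = L|M|M$ (invertible, as in that proposition) and conjugating by $S = |M|^{\h}$ collapses the outer factors to give $S^{-1}L_{\tilde{M}_z}S = B^{-1}L|M|$, whence
\[
S^{-1}L_{\tilde{M}_z}S - L = B^{-1}L|M| - L = B^{-1}\big(L|M| - BL\big) = B^{-1}L|M|(I - ML) = B^{-1}L|M|P_{\clw},
\]
using $BL = L|M|ML$ and $P_{\clw} = I - ML$. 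Thus $G = B^{-1}L|M|P_{\clw} = (L_{M_z}|M_z|M_z)^{-1}L_{M_z}|M_z|P_{\clw}$, and since $P_{\clw}$ has rank one, so does $G$; this is precisely the claimed decomposition $S^{-1}L_{\tilde{M}_z}S = L_{M_z} + G$.

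The only genuine obstacle is the algebra in the displayed line — specifically, spotting that $L|M| - BL = L|M|(I - ML)$, which isolates the rank-one factor $P_{\clw}$; the rest is formal cancellation among invertible operators. The hypotheses $\mathbb{C}[z]\subseteq\clh_k$, $\{f_n\}\subseteq\mathbb{C}[z]$, and $f_n\in\mathrm{span}\{z^m : m\ge 1\}$ for $n\ge 1$ enter only to force $\ker M_z^* = \mathbb{C} f_0$ (cf. Lemma \ref{lemma: ker Mz*}); this one-dimensionality of $\clw$, transported by the invertible $|M_z|^{-\h}$ to $\ker\tilde{M}_z^*$, is exactly what caps the rank of the perturbation at one.
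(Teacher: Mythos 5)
Your proposal is correct, but it takes a genuinely different route from the paper's proof. The paper argues pointwise on monomials: from $L_{M_z} z^n = z^{n-1}$ and Proposition \ref{prop: L tilde T new} it deduces $L_{\tilde{M}_z}(|M_z|^{\h} z^n) = |M_z|^{\h} z^{n-1}$ for $n \geq 1$, so that $F = |M_z|^{-\h} L_{\tilde{M}_z} |M_z|^{\h} - L_{M_z}$ annihilates every $z^n$, $n \geq 1$; the hypothesis $f_n \in \mbox{span}\{z^m : m \geq 1\}$ then shows $F$ vanishes on $\overline{\mbox{span}}\{f_n : n \geq 1\}$, leaving only the one-dimensional contribution from $f_0$. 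You instead prove the closed-form operator identity
\[
|M_z|^{-\h} L_{\tilde{M}_z} |M_z|^{\h} - L_{M_z} \;=\; (L_{M_z}|M_z|M_z)^{-1}\, L_{M_z} |M_z|\, (I - M_z L_{M_z}),
\]
whose algebra checks out: with $B = L_{M_z}|M_z|M_z$, Proposition \ref{prop: L tilde T new} collapses under conjugation by $|M_z|^{\h}$ to $B^{-1}L_{M_z}|M_z|$, and $L_{M_z}|M_z| - BL_{M_z} = L_{M_z}|M_z|(I - M_zL_{M_z})$ isolates the projection $P_{\clw}$ of \eqref{eqn:P_W = I- TL}. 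This buys you two things the paper's argument does not give directly: the polynomial hypotheses become superfluous (both your abstract observation that two left inverses of $\tilde{M}_z$ differ by an operator killing $\mbox{ran}\,\tilde{M}_z$, and your explicit identity, need only left invertibility and $\dim \ker M_z^* = 1$, and in general yield the bound $\mbox{rank} \leq \dim \ker T^*$); and the perturbation is produced in closed form at once, agreeing exactly with the paper's \eqref{eqn:F = rank one 2}, since $(M_z^* |M_z| M_z)^{-1} M_z^* = (L_{M_z}|M_z|M_z)^{-1} L_{M_z}$ by $M_z^* = |M_z|^2 L_{M_z}$, whereas the paper extracts that formula afterwards. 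The paper's route, by contrast, verifies the action on the concrete basis that drives the computations of Sections \ref{sec: truncated} and \ref{sect: final}, which is why it keeps the polynomial hypotheses in play. One small slip in your closing remark: those hypotheses do not ``force'' $\ker M_z^* = \mathbb{C} f_0$ --- that equality is assumed outright in the statement (it is Lemma \ref{lemma: ker Mz*} that forces it in the tridiagonal setting); in the paper's proof the hypotheses $f_n \in \mbox{span}\{z^m : m \geq 1\}$ do genuine work in locating where $F$ vanishes, while in your argument they do none.
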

\begin{proof}
Since $\ker M_z^* = \mathbb{C} f_0$, $L_{M_z} f_0 = 0$ and $L_{M_z} z^n = L_{M_z} M_z (z^{n-1}) = z^{n-1}$, by the definition of $L_{M_z}$. This implies $L_{M_z} z^n = z^{n-1}$, $n\geq 1$ (also see \eqref{eqn:L_Mz back shift}). In particular, $L_{M_z} f_n \in \mathbb{C}[z]$ for all $n \geq 0$. Moreover, for each $n \geq 1$, we have
\[
\begin{split}
L_{\tilde{M}_z} (|M_z|^{\h} z^n) & = |M_z|^{\h} ((L_{M_z} |M_z| M_z)^{-1} L_{M_z}) |M_z| z^n
\\
& = |M_z|^{\h} (L_{M_z} |M_z| M_z)^{-1} (L_{M_z} |M_z| M_z) z^{n-1},
\end{split}
\]
that is, $L_{\tilde{M}_z} (|M_z|^{\h} z^n)= |M_z|^{\h} z^{n-1}$. Therefore, we have
\[
(|M_z|^{-\h} L_{\tilde{M}_z} |M_z|^{\h}) z^n = L_{M_z} z^n = z^{n-1} \quad \quad (n\geq 1).
\]
Then $(|M_z|^{-\h} L_{\tilde{M}_z} |M_z|^{\h} - L_{M_z})f_n = 0$ for all $n \geq 1$, which gives
\[
(|M_z|^{-\h} L_{\tilde{M}_z} |M_z|^{\h} - L_{M_z})|_{\overline{span}\{f_n: n \geq 1\}} = 0.
\]
Finally, we have clearly $(|M_z|^{-\h} L_{\tilde{M}_z} |M_z|^{\h} - L_{M_z})f_0 = (|M_z|^{-\h} L_{\tilde{M}_z} |M_z|^{\h}) f_0$, and hence
\begin{equation}\label{eqn:F = rank one 1}
F:= |M_z|^{-\h} L_{\tilde{M}_z} |M_z|^{\h} - L_{M_z},
\end{equation}
is of rank at most one, and consequently $L_{\tilde{M}_z} |M_z|^{\h} = |M_z|^{\h}(L_{M_z} + F)$. This completes the proof of the theorem.
\end{proof}

The following analysis of $F$, defined as in \eqref{eqn:F = rank one 1}, will be useful in what follows. Note that
\begin{equation}\label{eqn:tilde L similar L+F}
L_{\tilde{M}_z} |M_z|^{\h} = |M_z|^{\h}(L_{M_z} + F).
\end{equation}
Let $g \in \clh_k$. Clearly, since $L_{M_z} f_0 = 0$, we have $F g = \BL g, f_0 \BR_{\clh_k} (|M_z|^{-\h} L_{\tilde{M}_z} |M_z|^{\h} f_0)$. Then Lemma \ref{prop: L tilde T} implies that
\begin{equation}\label{eqn:F = rank one 2}
F g = \BL g, f_0 \BR_{\clh_k} ((M_z^* |M_z| M_z)^{-1} M_z^* |M_z| f_0) \quad \quad (g \in \clh_k).
\end{equation}
As we will see in Section \ref{sec: truncated}, the appearance of the finite rank operator $F$ causes severe computational difficulties for Shimorin-Aluthge kernels of shifts. On the other hand, combining Theorem \ref{thm-Shimorin}, Proposition \ref{prop: L tilde T new} and \eqref{eqn:tilde L similar L+F}, we have:

\begin{Theorem}\label{thm: left analytic model version 2}
In the setting of Theorem \ref{thm: rank one perturb}, the Aluthge transform $\tilde{M}_z$ of $M_z$ on $\clh_k$ is unitarily equivalent to the shift $M_z$ on $\clh_{\tilde{k}}$, where
\[
\tilde{k}(z,w) = P_{\clw} (I - z L)^{-1} (I - \bar{w} L^*)^{-1}|_{\clw},
\]
$\clw = |M_z|^{-\h} \ker M_z^* = \mathbb{C} (|M_z|^{-\h} f_0)$, and
\[
L = |M_z|^{\h} (L_{M_z} + F) |M_z|^{-\h},
\]
and $F g = \BL g, f_0 \BR_{\clh_k} ((M_z^* |M_z| M_z)^{-1} M_z^* |M_z| f_0)$ for all $g \in \clh_k$.
\end{Theorem}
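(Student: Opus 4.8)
The plan is to recognize the statement as nothing more than the Shimorin model of Theorem~\ref{thm-Shimorin} applied to the operator $\tilde{M}_z$, once its hypotheses are checked and the Shimorin left inverse $L_{\tilde{M}_z}$ is rewritten in the rank-one-perturbed form. So the first task is to verify that Theorem~\ref{thm-Shimorin} is legitimately applicable to $T=\tilde{M}_z$, i.e.\ that $\tilde{M}_z$ is analytic and left-invertible. By Lemma~\ref{lemma: tilde T} we have $\tilde{M}_z = |M_z|^{\h} M_z |M_z|^{-\h}$, and since $M_z$ is left-invertible the positive operator $|M_z|$ is bounded below, so $|M_z|^{\h}$ is boundedly invertible and $\tilde{M}_z$ is \emph{similar} to $M_z$. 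Both properties needed descend through similarity: if $X M_z = I_{\clh_k}$ then $(|M_z|^{\h} X |M_z|^{-\h})\tilde{M}_z = I_{\clh_k}$, giving left-invertibility, while $\tilde{M}_z^n = |M_z|^{\h} M_z^n |M_z|^{-\h}$ yields
\[
\bigcap_{n=0}^\infty \tilde{M}_z^n \clh_k = |M_z|^{\h}\Big(\bigcap_{n=0}^\infty M_z^n \clh_k\Big) = \{0\},
\]
using that shifts on analytic Hilbert spaces are analytic (the remark following \eqref{eqn: cap Xn H =0}).

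Next I would simply invoke Theorem~\ref{thm-Shimorin} for $\tilde{M}_z$, which produces a unitary equivalence between $\tilde{M}_z$ on $\clh_k$ and $M_z$ on $\clh_{k_{\tilde{M}_z}}$, where
\[
k_{\tilde{M}_z}(z,w) = P_{\tilde{\clw}} (I - z L_{\tilde{M}_z})^{-1} (I - \bar{w} L_{\tilde{M}_z}^*)^{-1}|_{\tilde{\clw}},
\]
with $\tilde{\clw} = \ker \tilde{M}_z^*$. The second assertion of Lemma~\ref{lemma: tilde T} identifies $\tilde{\clw} = |M_z|^{-\h}\ker M_z^* = \mathbb{C}(|M_z|^{-\h} f_0)$, which is exactly the space $\clw$ of the statement (here I use the hypothesis $\ker M_z^* = \mathbb{C} f_0$). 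It then remains only to match $L_{\tilde{M}_z}$ with the operator $L$ in the statement, and this is immediate from \eqref{eqn:tilde L similar L+F}: right-multiplying $L_{\tilde{M}_z} |M_z|^{\h} = |M_z|^{\h}(L_{M_z}+F)$ by $|M_z|^{-\h}$ gives
\[
L_{\tilde{M}_z} = |M_z|^{\h}(L_{M_z}+F)|M_z|^{-\h} = L.
\]
Substituting $L_{\tilde{M}_z}=L$ and $\tilde{\clw}=\clw$ into the Shimorin kernel above turns $k_{\tilde{M}_z}$ into the claimed $\tilde{k}$, and the explicit rank-one form of $F$ has already been recorded in \eqref{eqn:F = rank one 2}.

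There is no substantive obstacle here beyond bookkeeping, because the real work has been done upstream: Proposition~\ref{prop: L tilde T new} already computes $L_{\tilde{M}_z}$, and Theorem~\ref{thm: rank one perturb} (together with \eqref{eqn:tilde L similar L+F}) already establishes the rank-one perturbation identity. The only point that genuinely deserves a line of care is the descent of analyticity and left-invertibility from $M_z$ to $\tilde{M}_z$ through the similarity of Lemma~\ref{lemma: tilde T}, since without it Theorem~\ref{thm-Shimorin} could not be invoked. I would also remark, for consistency, that this statement and Theorem~\ref{thm: left analytic model} express the \emph{same} unitary equivalence with two different formulas for the defining operator; their agreement is automatic, as both expressions equal $L_{\tilde{M}_z}$ by Proposition~\ref{prop: L tilde T new} and \eqref{eqn:tilde L similar L+F}.
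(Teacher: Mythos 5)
Your proposal is correct and follows exactly the paper's route: the paper derives this theorem in a single line by combining Theorem \ref{thm-Shimorin}, Proposition \ref{prop: L tilde T new} and \eqref{eqn:tilde L similar L+F}, which is precisely the combination you carry out, with $\tilde{\clw}=\ker\tilde{M}_z^*$ identified via Lemma \ref{lemma: tilde T} and $L_{\tilde{M}_z}=|M_z|^{\h}(L_{M_z}+F)|M_z|^{-\h}$ read off from \eqref{eqn:tilde L similar L+F}. Your explicit check that $\tilde{M}_z$ inherits left-invertibility and analyticity from $M_z$ through the similarity $\tilde{M}_z=|M_z|^{\h}M_z|M_z|^{-\h}$ merely makes explicit what the paper leaves implicit when invoking Shimorin's model, so there is no substantive divergence.
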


We now revisit Theorem \ref{thm: left analytic model} from a direct reproducing kernel Hilbert space standpoint. Indeed, there is a rather more concrete proof of Theorem \ref{thm: left analytic model} which avoids using the analytic model of left-invertible operators. In this case, also, the reproducing kernel of the corresponding Aluthge transform is explicit. Part of the proof follows the same line of argumentation as the proof of reproducing kernel property of range spaces (cf. \cite{Paulsen 92}). To the reader's benefit, we include all necessary details.

\begin{Theorem}\label{thm: revisit model}
Let $\cle$ be a Hilbert space, and let $k : \D \times \D \raro \clb(\cle)$ be an analytic kernel. Assume that the shift $M_z$ is left-invertible on $\clh_k$. Then
\[
\langle \tilde{k}(z, w) \eta, \zeta \rangle_{\cle} = \langle |M_z|^{-1} (k(\cdot, w) \eta), k(\cdot, z) \zeta) \rangle_{\clh_k} \quad \quad (z, w \in \D, \eta, \zeta \in \cle),
\]
defines a kernel $\tilde{k} : \D \times \D \raro \clb(\cle)$. Moreover, the shift $M_z$ on $\clh_{\tilde{k}}$ defines a bounded linear operator, and there exists a unitary $U : \clh_k \raro \clh_{\tilde{k}}$ such that $U \tilde{M}_z = M_z U$.
\end{Theorem}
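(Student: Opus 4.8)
The plan is to route the whole argument through the single positive operator $|M_z|^{-\h} \in \clb(\clh_k)$. Left-invertibility of $M_z$ means precisely that $M_z^* M_z = |M_z|^2$ is invertible, so $|M_z|$, $|M_z|^{-1}$ and $|M_z|^{-\h}$ are all bounded, positive and invertible on $\clh_k$; this is what legitimizes every expression below. First I would check that $\tilde k$ is a genuine analytic kernel. Factoring $|M_z|^{-1} = |M_z|^{-\h}|M_z|^{-\h}$ and using self-adjointness, for any finite families $\{z_i\} \subseteq \D$ and $\{\eta_i\} \subseteq \cle$,
\[
\sum_{i,j} \BL \tilde k(z_i, z_j)\eta_j, \eta_i \BR_{\cle} = \Big\| \sum_i |M_z|^{-\h}\big(k(\cdot, z_i)\eta_i\big) \Big\|_{\clh_k}^2 \geq 0,
\]
so $\tilde k$ is positive definite. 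Analyticity in the first variable comes from the reproducing property of $\clh_k$, which identifies $\tilde k(z,w)\eta = \big(|M_z|^{-1} k(\cdot, w)\eta\big)(z)$; that is, $\tilde k(\cdot, w)\eta = |M_z|^{-1} k(\cdot, w)\eta$ is itself a member of $\clh_k \subseteq \clo(\D, \cle)$, hence analytic in $z$.

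Next I would pin down $\clh_{\tilde k}$ concretely. The claim is that, as a set of functions, $\clh_{\tilde k}$ is nothing but $\clh_k$ re-equipped with the inner product $\BL g, h \BR_{\tilde k} := \BL |M_z| g, h \BR_{\clh_k}$. Since $|M_z|^{\pm 1}$ are bounded, this norm is equivalent to the $\clh_k$-norm, so the space is complete and point evaluations stay bounded. Using the reproducing property of $\clh_k$ and self-adjointness of $|M_z|$ one checks
\[
\BL g, |M_z|^{-1} k(\cdot, w)\eta \BR_{\tilde k} = \BL |M_z| g,\, |M_z|^{-1}k(\cdot, w)\eta\BR_{\clh_k} = \BL g, k(\cdot, w)\eta\BR_{\clh_k} = \BL g(w), \eta\BR_{\cle},
\]
so $|M_z|^{-1}k(\cdot, w)\eta$ reproduces point values in this inner product. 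Hence the reproducing kernel of $(\clh_k, \BL \cdot, \cdot\BR_{\tilde k})$ is $\tilde k$, and by uniqueness of the Hilbert space attached to a kernel, this re-normed space is $\clh_{\tilde k}$. In particular the shift on $\clh_{\tilde k}$ is literally multiplication by $z$ on $\clh_k$.

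Finally I would set $U := |M_z|^{-\h}$, viewed as a map $\clh_k \raro \clh_{\tilde k}$. It is unitary, since $\|Uh\|_{\tilde k}^2 = \BL |M_z|^{\h} h,\, |M_z|^{-\h} h \BR_{\clh_k} = \|h\|_{\clh_k}^2$, and it intertwines as required: by Lemma \ref{lemma: tilde T}, $\tilde M_z = |M_z|^{\h} M_z |M_z|^{-\h}$, whence
\[
U \tilde M_z = |M_z|^{-\h}\, |M_z|^{\h} M_z |M_z|^{-\h} = M_z\, |M_z|^{-\h} = M_z U.
\]
Because the underlying set of $\clh_{\tilde k}$ is $\clh_k$, the $M_z$ on the right is genuinely the shift on $\clh_{\tilde k}$; being the unitary conjugate $U \tilde M_z U^{-1}$ of the bounded operator $\tilde M_z$, it is bounded, which disposes of the remaining assertion. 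I expect the only delicate point to be the middle step: verifying that the \emph{abstract} RKHS $\clh_{\tilde k}$ coincides with the concrete re-normed copy of $\clh_k$ (so that ``the shift on $\clh_{\tilde k}$'' is meaningful and equals multiplication by $z$), and that the defining bilinear formula for $\tilde k$ is exactly the evaluation $(|M_z|^{-1}k(\cdot, w)\eta)(z)$. Everything else is a short manipulation of the bounded functional calculus once the correct intertwiner $U = |M_z|^{-\h}$ has been isolated.
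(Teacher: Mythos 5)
Your proof is correct and takes essentially the same route as the paper: there, too, $\clh_{\tilde k}$ is realized as the range space $\tilde{\clh} = |M_z|^{-\h}\clh_k$, which is exactly $\clh_k$ re-equipped with the inner product $\langle |M_z|\, \cdot\,, \cdot \rangle_{\clh_k}$, the vectors $|M_z|^{-1}(k(\cdot,w)\eta)$ are shown to reproduce point values, and the intertwining unitary is the same $U = |M_z|^{-\h}$ used with Lemma \ref{lemma: tilde T}. Your only departures are cosmetic: you verify $U\tilde{M}_z = M_zU$ by direct algebra from $\tilde{M}_z = |M_z|^{\h} M_z |M_z|^{-\h}$ (the paper instead computes with $\tilde{M}_z^* = |M_z|^{-\h} M_z^* |M_z|^{\h}$ against kernel vectors), and you get completeness and bounded point evaluations from norm equivalence plus uniqueness of the RKHS attached to $\tilde k$, where the paper estimates the evaluation operators $ev_w$ explicitly.
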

\begin{proof}
Define $\tilde \clh = |M_z|^{-\h} \clh_k$. Then $\tilde{\clh} (= \clh_k)$ is an $\cle$-valued function Hilbert space endowed with the inner product $\langle |M_z|^{-\h} f, |M_z|^{-\h} g \rangle_{\tilde{\clh}} = \langle f, g \rangle_{\clh_k}$ for all $f, g \in \clh_k$. For each $f \in \clh_k$, $w \in \D$ and $\eta \in \cle$, we have
\[
\langle |M_z|^{-\h} f, |M_z|^{-1} (k(\cdot, w) \eta) \rangle_{\tilde{\clh}}  = \langle f,  |M_z|^{-\h} (k(\cdot, w)\eta) \rangle_{\clh_k} = \langle |M_z|^{-\h} f, k(\cdot, w) \eta \rangle_{\clh_k},
\]
and hence, by the reproducing property of $\clh_k$, it follows that
\begin{equation}\label{eqn: thm 2.8 kernel equality}
\langle |M_z|^{-\h} f, |M_z|^{-1} (k(\cdot, w) \eta) \rangle_{\tilde{\clh}} = \langle (|M_z|^{-\h} f) (w), \eta \rangle_{\cle}.
\end{equation}
This says that $\{|M_z|^{-1} (k(\cdot, w) \eta): w \in \D, \eta \in \cle\}$ reproduces the values of functions in $\tilde{\clh}$, and furthermore, the evaluation operator $ev_w : \tilde \clh \raro \cle$ is continuous. Indeed
\[
\begin{split}
|\langle ev_w (|M_z|^{-\h} f), \eta \rangle_{\cle} | & = |\langle (|M_z|^{-\h} f) (w), \eta \rangle_{\cle}|
\\
& = |\langle |M_z|^{-\h} f, |M_z|^{-1} (k(\cdot, w) \eta) \rangle_{\tilde{\clh}}|
\\
& \leq \||M_z|^{-\h} f\|_{\tilde{\clh}} \||M_z|^{-1} (k(\cdot, w) \eta)\|_{\tilde{\clh}}
\\
& = \||M_z|^{-\h} f\|_{\tilde{\clh}} \; \||M_z|^{-\h} (k(\cdot, w) \eta)\|_{\clh_k}.
\end{split}
\]
Since $\|k(\cdot, w) \eta\|^2_{\clh_k} = \langle k(\cdot, w) \eta, k(\cdot, w) \eta \rangle_{\clh_k} = \langle k(w, w) \eta, \eta \rangle_{\cle} = \|k(w, w)^{\frac{1}{2}} \eta\|^2_{\cle}$, it follows that
\[
\begin{split}
\||M_z|^{-\h} (k(\cdot, w) \eta)\|_{\clh_k} & \leq \| |M_z|^{-\h}\|_{\clb(\clh_k)} \|k(\cdot, w) \eta\|_{\clh_k}
\\
& = \| |M_z|^{-\h}\|_{\clb(\clh_k)}\; \|k(w, w)^{\frac{1}{2}} \eta\|_{\cle}
\\
& \leq \| |M_z|^{-\h}\|_{\clb(\clh_k)}\; \|k(w, w)^{\frac{1}{2}}\|_{\clb(\cle)}\; \|\eta\|_{\cle},
\end{split}
\]
which implies that
\[
| \langle ev_w (|M_z|^{-\h} f), \eta \rangle_{\cle}| \leq (\| |M_z|^{-\h}\|_{\clb(\clh_k)} \|k(w, w)^{\frac{1}{2}}\|_{\clb(\cle)})  \||M_z|^{-\h} f\|_{\tilde{\clh}}\; \|\eta\|_{\cle}.
\]
Therefore $\tilde{\clh}$ is an $\cle$-valued reproducing kernel Hilbert space corresponding to the kernel function
\[
\tilde{k}(z, w) = ev_z \circ ev_w^* \quad \quad (z, w \in \D).
\]
Clearly, \eqref{eqn: thm 2.8 kernel equality} implies that $ev_w^* \eta = |M_z|^{-1} (k(\cdot, w) \eta)$ for all $w \in \D$ and $\eta \in \cle$. Since $\langle \tilde{k}(z, w) \eta, \zeta \rangle_{\cle}  = \langle ev_w^* \eta, ev_z^* \zeta \rangle_{\cle}$, it follows that
\[
\begin{split}
\langle \tilde{k}(z, w) \eta, \zeta \rangle_{\cle} & = \langle |M_z|^{-1} (k(\cdot, w) \eta), |M_z|^{-1} (k(\cdot, z) \zeta) \rangle_{\tilde{\clh}}
\\
& = \langle |M_z|^{-\h} (k(\cdot, w) \eta), |M_z|^{-\h} (k(\cdot, z) \zeta)\rangle_{\clh_k},
\end{split}
\]
that is, $\langle \tilde{k}(z, w) \eta, \zeta \rangle_{\cle} = \langle |M_z|^{-1} (k(\cdot, w) \eta), k(\cdot, z) \zeta) \rangle_{\clh_k}$, $z, w \in \D, \eta, \zeta \in \cle$. Therefore, as a reproducing kernel Hilbert space corresponding to the kernel $\tilde k$, we have $\clh_{\tilde{k}} = \tilde \clh$. Define the unitary map $U : \clh_k \raro \clh_{\tilde{k}}$ by
\[
Uh = |M_z|^{-\frac{1}{2}}h \quad\quad (h \in \clh_k),
\]
and recall from  Lemma \ref{lemma: tilde T} that $\tilde{M}_z^* = |M_z|^{-\h} M_z^* |M_z|^{\h}$. Let $f \in \clh_k$, $w \in \D$, and let $\eta \in \cle$. Then
\[
\begin{split}
\langle (U \tilde{M}_z U^* (|M_z|^{-\h} f))(w), \eta \rangle_{\cle} & = \langle U \tilde{M}_z U^* (|M_z|^{-\h} f), |M_z|^{-1} (k(\cdot, w) \eta) \rangle_{\clh_{\tilde k}}
\\
& = \langle \tilde{M}_z U^* (|M_z|^{-\h} f), |M_z|^{-\h} (k(\cdot, w) \eta) \rangle_{\clh_{k}}
\\
& = \langle f, \tilde{M}_z^* |M_z|^{-\h} (k(\cdot, w) \eta) \rangle_{\clh_{k}}
\\
& = \langle f, |M_z|^{-\h}  {M}_z^* (k(\cdot, w) \eta) \rangle_{\clh_{k}}.
\end{split}
\]
But since ${M}_z^* (k(\cdot, w) \eta) = \bar{w} k(\cdot, w) \eta$, we have
\[
\langle (U \tilde{M}_z U^* (|M_z|^{-\h} f))(w), \eta \rangle_{\cle} = w \langle f, |M_z|^{-\h} (k(\cdot, w) \eta) \rangle_{\clh_{k}} = \langle w (|M_z|^{-\h} f\big))(w), \eta \rangle_{\cle},
\]
which implies that $U \tilde{M}_z U^* (|M_z|^{-\h} f) = z  (|M_z|^{-\h} f)$ for all $f \in \clh_k$. Thus the shift $M_z$ on $\clh_{\tilde k}$ is a bounded linear operator and $U \tilde{M}_z = M_z U$.
\end{proof}

\begin{Definition}
The kernel $\tilde{k}$ is called the \textit{standard Aluthge kernel} of $M_z$.
\end{Definition}

In particular, if $k$ is a scalar-valued kernel, then $\tilde{k}(\cdot, w) = U (|M_z|^{-\frac{1}{2}} k(\cdot, w))$ and
\[
\tilde{k} (z, w) = \BL |M_z|^{-1} {k}(\cdot, w), {k}(\cdot, z) \BR_{\clh_k} \quad \quad (z, w\in \D).
\]
Therefore, if the shift on a tridiagonal space $\clh_k$ is left-invertible, then there are two ways to compute the Aluthge kernel $\tilde{k}$: use Theorem \ref{thm: left analytic model}, or use the one above. However, it is curious to note that, from a general computational point of view, neither approach is completely satisfactory and definite. On the other hand, often the standard Aluthge kernel approach (and sometimes both standard Aluthge kernel and Shimorin-Aluthge kernel methods) lead to satisfactory results. We will discuss this in the following section.

\newsection{Truncated tridiagonal kernels}\label{sec: truncated}

In this section, we introduce a (perhaps both deliberate and accidental) class of analytic tridiagonal kernels from a computational point of view. Let $\clh_k$ be an analytic tridiagonal space corresponding to the kernel
\[
k(z, w) = \sum_{n=0}^\infty f_n(z) \overline{f_n(w)} \quad \quad (z, w \in \D),
\]
where $f_n = (a_n + b_n z) z^n$, $n \geq 0$.  Suppose $r \geq 2$ is a natural number. We say that $k$ is a \textit{truncated tridiagonal kernel of order $r$} (in short, \textit{truncated kernel of order $r$}) if
\[
b_n = 0 \quad \quad (n \neq 2, 3, \ldots, r).
\]
We say that an analytic tridiagonal space $\clh_k$ is \textit{truncated space of order $r$} if $k$ is a truncated kernel of order $r$. Note that there are no restrictions imposed on the scalars $b_2, \ldots, b_r$.

Let $\clh_k$ be a truncated space of order $r$. Then $\tilde{M}_z$ is unitarily equivalent to $M_z$ on $\clh_{\tilde{k}}$, where $\tilde{k}$ is either the Shimorin-Aluthge kernel or the standard Aluthge kernel of $M_z$ as in Theorem \ref{thm: left analytic model} and Theorem \ref{thm: revisit model}, respectively. Here our aim is to compute the Shimorin-Aluthge kernel of ${M}_z$. More specifically, we classify all truncated kernels $k$ such that the Shimorin-Aluthge kernel $\tilde{k}$ of ${M}_z$ is tridiagonal. We begin by computing $|M_z|^{-1}$.

\begin{Lemma}\label{lemma: truncated}
If $\clh_k$ is a truncated space of order $r$, then
\[
\Big[|M_z|^{-1}\Big] =
\begin{bmatrix}
|\frac{a_1}{a_0}| & 0 & 0 &\cdots & 0 & 0 & 0 & \cdots
\\
0 & c_{11} & c_{12} &\cdots & c_{1, r+1} & 0 & 0 & \ddots
\\
0 & \bar{c}_{12} & c_{22} & \cdots & c_{2, r+1} & 0 & 0 & \ddots
\\
\vdots &\vdots & \vdots & \cdots & \vdots & \vdots & \vdots & \ddots
\\
0 & \bar{c}_{1, r+1} & \bar{c}_{2, r+1} &\cdots & c_{r+1, r+1} & 0 & 0 & \ddots
\\
0 & 0 & 0 &\cdots & 0 & |\frac{a_{r+3}}{a_{r+2}}|& 0 & \ddots
\\
0 & 0 & 0 &\cdots & 0 & 0 & |\frac{a_{r+4}}{a_{r+3}}|&  \ddots
\\
\vdots & \vdots & \vdots &\cdots & \vdots & \vdots & \ddots &\ddots
\end{bmatrix},
\]
with respect to the orthonormal basis $\{f_n\}_{n \geq 0}$.
\end{Lemma}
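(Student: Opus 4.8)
The plan is to compute $|M_z|^{-2}$ explicitly and then recover $|M_z|^{-1}$ as its positive square root. By Lemma~\ref{prop: L tilde T} we have $|M_z|^{-2} = L_{M_z} L_{M_z}^*$, and the matrix of $L := L_{M_z}$ with respect to $\{f_n\}_{n\geq 0}$ is given explicitly in Theorem~\ref{Prop: L Matrix}. Since the $(m,n)$ entry of $L L^*$ equals the inner product of the $m$-th and $n$-th row vectors of $[L]$ (with the second conjugated), the whole computation reduces to tracking which columns each row of $[L]$ is supported on; disjoint column supports will force the off-block entries to vanish.

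First I would specialize Theorem~\ref{Prop: L Matrix} to a truncated kernel. Because $b_0 = b_1 = 0$ and $b_n = 0$ for $n \geq r+1$, the quantities $d_n = \frac{b_n}{a_n} - \frac{b_{n-1}}{a_{n-1}}$ satisfy $d_1 = 0$ and $d_n = 0$ for all $n \geq r+2$, the only possibly nonzero ones being $d_2, \ldots, d_{r+1}$ (with $d_{r+1} = -\frac{b_r}{a_r}$). Moreover the tail $-d_n \sum_{m\geq 0}(-1)^m \frac{\prod_{j=0}^{m} b_{n+j}}{\prod_{j=0}^{m} a_{n+1+j}} f_{n+1+m}$ appearing in $L f_n$ terminates at $f_{r+1}$, since every surviving product eventually reaches the factor $b_{r+1} = 0$. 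Consequently $L f_1 = \frac{a_1}{a_0} f_0$; for $n \geq r+2$ one has $L f_n = \frac{a_n}{a_{n-1}} f_{n-1}$; and for $2 \leq n \leq r+1$ the vector $L f_n$ lies in $\mbox{span}\{f_{n-1}, f_n, \ldots, f_{r+1}\}$.

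The key step is the support bookkeeping on the rows of $[L]$. I would verify: row $0$ is supported only on column $1$ (its single entry being $\frac{a_1}{a_0}$); each row $m$ with $1 \leq m \leq r+1$ is supported in columns $\{2, \ldots, r+2\}$, the largest occurring being $m+1$ (from the entry $\frac{a_{m+1}}{a_m}$); and each row $n$ with $n \geq r+2$ is supported on the single column $n+1 \geq r+3$, with entry $\frac{a_{n+1}}{a_n}$. Thus the index sets $\{0\}$, $\{1, \ldots, r+1\}$ and $\{r+2, r+3, \ldots\}$ carry mutually disjoint column supports, so every entry of $L L^*$ whose row and column indices lie in different sets vanishes; within the tail the supports are distinct singletons, so that piece is diagonal. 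This yields the block decomposition $|M_z|^{-2} = [\,|a_1/a_0|^2\,] \oplus B \oplus \mbox{diag}(|a_{r+3}/a_{r+2}|^2, |a_{r+4}/a_{r+3}|^2, \ldots)$, where $B$ is an $(r+1)\times(r+1)$ positive matrix on the indices $1, \ldots, r+1$. Since the positive square root of a block-diagonal positive operator is block-diagonal with the square roots of the blocks, applying $(\cdot)^{1/2}$ gives the asserted matrix for $|M_z|^{-1}$: central block $B^{1/2} =: (c_{ij})_{1 \leq i,j \leq r+1}$, and diagonal tail entries $|a_{n+1}/a_n|$ for $n \geq r+2$.

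The step I expect to require the most care is the boundary bookkeeping around the indices $r+1$ and $r+2$: one must confirm both that the tail sum genuinely terminates (rather than persisting) because of $b_{r+1} = 0$, and that row $r+1$ reaches only as far as column $r+2$ while the first tail row $r+2$ already sits on column $r+3$, so that the central block and the diagonal tail truly decouple. Everything else—the vanishing of $d_1$ and of $d_n$ for $n \geq r+2$, and the block-diagonality of the square root—is routine, and the central block $B^{1/2}$ need not be computed explicitly, as the statement records only its generic positive form.
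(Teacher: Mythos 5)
Your proposal is correct and follows essentially the same route as the paper: specialize the matrix of $L_{M_z}$ from Theorem \ref{Prop: L Matrix} to the truncated setting (where $d_1=0$ and $d_n=0$ for $n\geq r+2$, with the tail sums terminating at $f_{r+1}$ because $b_{r+1}=0$), use $|M_z|^{-2}=L_{M_z}L_{M_z}^*$ from Lemma \ref{prop: L tilde T} to obtain the block decomposition $[\,|a_1/a_0|^2\,]\oplus A^2_{r+1}\oplus D^2$, and take the positive square root blockwise. Your explicit row-support bookkeeping simply fills in the detail the paper compresses into ``using this, one easily completes the proof,'' and your boundary checks at indices $r+1$ and $r+2$ are accurate.
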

\begin{proof}
For each $n \geq 1$, by the definition of $d_n$ from \eqref{eqn:d_n}, we have $d_n = \frac{b_n}{a_n} - \frac{b_{n-1}}{a_{n-1}}$, and hence $d_1 = d_{r+i} = 0 $, $i=2, 3, \ldots$. Then Theorem \ref{Prop: L Matrix} tells us that
\[
[L_{M_z}] = \begin{bmatrix}
0 & \frac{a_1}{a_0} & 0  & \cdots & 0  & 0 & 0 & 0 & \cdots
\\
0 & 0 & \frac{a_2}{a_1}  & \cdots & 0  & 0 & 0 & 0 & \ddots
\\
0 & 0 & d_2  & \cdots & 0  & 0 & 0 & 0 & \ddots
\\
\vdots & \vdots & \vdots & \vdots & \vdots & \vdots & \vdots & \vdots & \ddots
\\
0 & 0 &(-1)^{r-2}\frac{d_2 b_2\cdots b_{r-1}}{a_3\cdots a_r} & \cdots & d_r  & \frac{a_{r+1}}{a_r} & 0  & 0  & \ddots
\\
0 & 0 &(-1)^{r-1}\frac{d_2 b_2\cdots b_r}{a_3\cdots a_r a_{r+1}} & \cdots & -\frac{d_r b_r}{a_{r+1}} &  d_{r+1} & \frac{a_{r+2}}{a_{r+1}} & 0 & \ddots
\\
0 & 0 & 0  & \cdots & 0  & 0 & 0 & \frac{a_{r+3}}{a_{r+2}} &\ddots
\\
\vdots & \vdots & \vdots & \vdots & \vdots & \vdots & \vdots & \ddots & \ddots
\end{bmatrix}.
\]
Now, by Lemma \ref{prop: L tilde T}, $|M_z|^{-2} = L_{M_z} L^*_{M_z}$, which implies
\[
\Big[|M_z|^{-2}\Big] = \begin{bmatrix}
|\frac{a_1}{a_0}|^2 & 0 & 0
\\
0 & A^2_{r+1} & 0
\\
0 & 0 & D^2
\end{bmatrix},
\]
where $D^2 = \diag \Big(\Big|\frac{a_{r+3}}{a_{r+2}}\Big|^2, \Big|\frac{a_{r+4}}{a_{r+3}}\Big|^2, \ldots \Big)$ and $A^2_{r+1}$ is a positive definite matrix of order $r+1$. Using this, one easily completes the proof.
\end{proof}

From the computational point of view, it is useful to observe that $A^2_{r+1} = L_{r+1} L_{r+1}^*$, where
\[
L_{r+1} = \begin{bmatrix}
\frac{a_2}{a_1}  & 0  & 0 & 0  & 0
\\
d_2  & \frac{a_3}{a_2}  & 0 & 0  & 0
\\
\vdots & \vdots & \vdots & \vdots & \vdots
\\
(-1)^{r-2}\frac{d_2 b_2\cdots b_{r-1}}{a_3\cdots a_r} &  (-1)^{r-3}\frac{d_3 b_3 \cdots b_{r-1}}{a_4 \cdots a_r} & \cdots & \frac{a_{r+1}}{a_r} & 0
\\
(-1)^{r-1}\frac{d_2 b_2\cdots b_r}{a_3\cdots a_r a_{r+1}} & (-1)^{r-2}\frac{d_3 b_3\cdots b_r}{a_4\cdots a_r a_{r+1}} & \cdots  &  d_{r+1} & \frac{a_{r+2}}{a_{r+1}}
\end{bmatrix}.
\]
In other words, $A^2_{r+1}$ admits a lower-upper triangular factorization. This is closely related to the Cholesky factorizations/decompositions of positive-definite matrices in the setting of infinite dimensional Hilbert spaces (see \cite{Paulsen 92} and \cite{Woerdeman}).

We recall from Theorem \ref{thm: left analytic model version 2} that the Shimorin-Aluthge kernel of ${M}_z$ is given by
\[
\tilde{k}(z, w) = P_{\tilde \clw} (I - z L_{\tilde M_z})^{-1} (I - \bar{w} L_{\tilde M_z}^*)^{-1}|_{\tilde \clw} \quad \quad (z, w \in \D),
\]
where $\tilde \clw = |M_z|^{-\h} \ker M_z^*$, and
\begin{equation}\label{eqn: truncated L tilde M}
L_{\tilde M_z} = |M_z|^{\h} (L_{M_z} + F) |M_z|^{-\h},
\end{equation}
and $F g = \BL g, f_0 \BR_{\clh_k} \Big((M_z^* |M_z| M_z)^{-1} M_z^* |M_z| f_0\Big)$, $g \in \clh_k$. We now come to the key point.

\begin{Lemma}\label{lemma: truncated 1}
If $k$ is a truncated kernel, then $F = 0$ and ${L}_{\tilde{M}_z} |M_z|^{\h} = |M_z|^{\h} L_{M_z}$.
\end{Lemma}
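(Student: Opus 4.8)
The plan is to reduce everything to showing that the rank-one operator $F$ vanishes, since once $F=0$ the claimed identity ${L}_{\tilde{M}_z}|M_z|^{\h} = |M_z|^{\h} L_{M_z}$ is immediate from \eqref{eqn:tilde L similar L+F}. Recall from \eqref{eqn:F = rank one 2} that $Fg = \BL g, f_0 \BR_{\clh_k}\, v$ for all $g \in \clh_k$, where $v = (M_z^* |M_z| M_z)^{-1} M_z^* |M_z| f_0$ is a fixed vector. Because $M_z$ is left-invertible and $|M_z|$ is positive and invertible, the operator $M_z^* |M_z| M_z$ is invertible; hence $F = 0$ if and only if its range vector $v$ is zero, which in turn is equivalent to $M_z^* |M_z| f_0 = 0$. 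So the entire argument reduces to verifying this single equality in the truncated setting.

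The key observation I would isolate is that, for a truncated kernel, $f_0$ is an eigenvector of $|M_z|$. The truncation hypothesis forces $b_0 = b_1 = 0$, so $d_1 = \frac{b_1}{a_1} - \frac{b_0}{a_0} = 0$; by Theorem \ref{Prop: L Matrix} this causes the first row and first column of $[|M_z|^{-1}]$ to decouple from the rest. This is exactly what Lemma \ref{lemma: truncated} records: its matrix representation shows $|M_z|^{-1} f_0 = |\frac{a_1}{a_0}| f_0$. Inverting this relation gives $|M_z| f_0 = |\frac{a_0}{a_1}| f_0 \in \mathbb{C} f_0$.

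Combining this with Lemma \ref{lemma: ker Mz*}, which identifies $\ker M_z^* = \mathbb{C} f_0$, finishes the reduction: since $|M_z| f_0$ is a scalar multiple of $f_0$ and $M_z^* f_0 = 0$, we get $M_z^* |M_z| f_0 = |\frac{a_0}{a_1}|\, M_z^* f_0 = 0$. Therefore $v = 0$, so $F = 0$, and \eqref{eqn:tilde L similar L+F} then yields ${L}_{\tilde{M}_z}|M_z|^{\h} = |M_z|^{\h}(L_{M_z} + F) = |M_z|^{\h} L_{M_z}$.

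I expect no real computational obstacle here; the only conceptual point is recognising that truncation, i.e.\ the vanishing $d_1 = 0$, is precisely the condition that decouples $f_0$ from the spectral structure of $|M_z|$. If $b_0$ or $b_1$ were nonzero, then $L_{M_z} f_1$ would carry a nonzero $f_1$-component, $|M_z|^{-2} f_0 = L_{M_z} L_{M_z}^* f_0$ would fail to be proportional to $f_0$, and $F$ would not vanish in general---which is exactly the source of the difficulties for non-truncated kernels discussed later. Thus the main thing to get right is the clean citation of Lemma \ref{lemma: truncated} to extract the eigenvector property, rather than any delicate estimate.
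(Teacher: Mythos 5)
Your proof is correct and takes essentially the same route as the paper's: both read off $|M_z| f_0 = |\frac{a_0}{a_1}| f_0$ from the matrix representation in Lemma \ref{lemma: truncated}, conclude $M_z^* |M_z| f_0 = 0$ via Lemma \ref{lemma: ker Mz*}, and then obtain $F = 0$ and the intertwining identity from \eqref{eqn:F = rank one 2} and \eqref{eqn: truncated L tilde M} (your explicit reduction of $F = 0$ to $M_z^* |M_z| f_0 = 0$, using invertibility of $M_z^* |M_z| M_z$, is a detail the paper leaves implicit). One caveat on your closing heuristic only: nonvanishing of $b_0$ or $b_1$ does not by itself force $F \neq 0$ --- the relevant quantity is $d_1 = \frac{b_1}{a_1} - \frac{b_0}{a_0}$, and in the paper's Example \ref{example: truncated SK not SK} one has $b_0 = b_1 = 1$ with $d_1 = 0$, so that $F = 0$ there even though the Shimorin-Aluthge kernel still fails to be tridiagonal.
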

\begin{proof}
The matrix representation of $|M_z|^{-1}$ in Lemma \ref{lemma: truncated} implies that $|M_z| f_0 = |\frac{a_0}{a_1}| f_0$, and hence
\[
M_z^* |M_z| f_0 = \Big|\frac{a_0}{a_1}\Big| M_z^* f_0 = 0,
\]
by Lemma \ref{lemma: ker Mz*}. Therefore, the proof follows from the definition of $F$ and \eqref{eqn: truncated L tilde M}.
\end{proof}

We are finally ready to state and prove the result we are aiming for.

\begin{Theorem}\label{thm: truncated}
Let $\clh_k$ be a truncated space of order $r$. Then the Shimorin-Aluthge kernel is tridiagonal if and only if
\[
c_{mn}=(-1)^{n-m-1} \frac{\bar{b}_{m+1} \cdots \bar{b}_{n-1}}{\bar{a}_{m+2} \cdots \bar{a}_n} c_{m, m+1},
\]
for all $1 \leq m \leq n-2$ and $3 \leq n \leq r+1$, where $c_{mn}$ are the entries of the middle block submatrix of order $r+1$ of $\Big[ |M_z|^{-1}\Big]$ in Lemma \ref{lemma: truncated}.
\end{Theorem}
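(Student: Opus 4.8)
The plan is to express the Shimorin--Aluthge kernel entirely through the matrix of $|M_z|^{-1}$ and then quote Theorem \ref{thm: P kernel and Mz}. By Theorem \ref{thm: left analytic model version 2} the kernel is $\tilde{k}(z,w) = P_{\tilde \clw}(I - zL_{\tilde M_z})^{-1}(I - \bar w L_{\tilde M_z}^*)^{-1}|_{\tilde \clw}$, so its formal matrix entries are $X_{mn} := P_{\tilde \clw}L_{\tilde M_z}^m L_{\tilde M_z}^{*n}|_{\tilde \clw}$, and $\tilde k$ is tridiagonal if and only if $X_{mn} = 0$ whenever $|m-n|\geq 2$. Two features of the truncated case drive the argument. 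First, Lemma \ref{lemma: truncated 1} gives $F = 0$, hence $L_{\tilde M_z} = |M_z|^{\h} L_{M_z} |M_z|^{-\h}$, a similarity by the self-adjoint operator $|M_z|^{\h}$. Second, the block form of $|M_z|^{-1}$ in Lemma \ref{lemma: truncated} shows $|M_z| f_0 = |a_0/a_1| f_0$, so $f_0$ is an eigenvector of $|M_z|$; consequently $\tilde \clw = |M_z|^{-\h}\ker M_z^* = \mathbb{C} f_0 = \clw$ and $P_{\tilde \clw} = P_{\clw}$.

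I would first carry out the reduction. Since $|M_z|^{\h}$ is self-adjoint and fixes the line $\mathbb{C} f_0$, conjugating gives $L_{\tilde M_z}^m L_{\tilde M_z}^{*n} = |M_z|^{\h} L_{M_z}^m |M_z|^{-1} L_{M_z}^{*n} |M_z|^{\h}$, and pulling the scalar eigenvalue of $|M_z|^{\h}$ on $f_0$ through $P_{\clw}$ yields
\[
X_{mn} f_0 = \Big| \frac{a_0}{a_1}\Big| \, P_{\clw} L_{M_z}^m |M_z|^{-1} L_{M_z}^{*n} f_0 \qquad (m,n\geq 0).
\]
Because $L_{M_z} f_0 = 0$ (Theorem \ref{Prop: L Matrix}) we get $L_{\tilde M_z} f_0 = 0$, so the entries with $m=0$ or $n=0$ and $m\neq n$ vanish automatically; only $m,n\geq 1$ require attention.

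Next I would compute $L_{M_z}^{*n} f_0$. For a truncated kernel $b_0 = 0$, so by \eqref{eqn: beta n} and \eqref{eqn: beta n p} the coefficients $\beta_i^{(n)}$ collapse: $\beta_i^{(n)} = 0$ for $i \leq n-2$ and $\beta_{n-1}^{(n)} = b_{n-1}$. Feeding this into \eqref{eqn:L*(n+2)f0} (equivalently, reading off the first column of $[L^{*n}]$) gives the clean form $L_{M_z}^{*n} f_0 = \frac{1}{\bar a_0}(\bar b_{n-1} f_{n-1} + \bar a_n f_n)$ for all $n \geq 1$. Writing $c_{mn} = \langle |M_z|^{-1} f_n, f_m\rangle$ for the entries in Lemma \ref{lemma: truncated} and expanding $\langle |M_z|^{-1} L_{M_z}^{*n} f_0, L_{M_z}^{*m} f_0\rangle$ produces exactly
\[
a_m\big(\bar a_n c_{mn} + \bar b_{n-1} c_{m,n-1}\big) + b_{m-1}\big(\bar a_n c_{m-1,n} + \bar b_{n-1} c_{m-1,n-1}\big),
\]
up to the nonzero constant $|a_0/a_1|/(\|f_0\|^2 |a_0|^2)$. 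This is precisely the coefficient $\alpha_{mn}$ from \eqref{eqn:alpha mn} for the positive operator $P = |M_z|^{-1}$; that is, $X_{mn} = 0 \iff \alpha_{mn} = 0$ for $m,n\geq 1$. Since the standard Aluthge kernel equals $\langle |M_z|^{-1} k(\cdot, w), k(\cdot, z)\rangle = \sum \alpha_{mn} z^m \bar w^n$, this same identity simultaneously proves Corollary \ref{cor:truncated SK = SK}.

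Finally I would invoke Theorem \ref{thm: P kernel and Mz} with $P = |M_z|^{-1}$: the vanishing of all $\alpha_{mn}$, $|m-n|\geq 2$, is equivalent to the stated recursion on the $c_{mn}$. It then remains to see which conditions are nontrivial. Row and column $0$ of $|M_z|^{-1}$ carry only the diagonal entry, so $c_{0n} = 0$ ($n\geq 1$) and the $\alpha_{0n}$ conditions hold vacuously; and if either index exceeds $r+1$ then $b_{n-1}=0$ and the relevant $c$'s sit off the middle block and vanish, so those $\alpha_{mn}$ are automatically $0$. Hence the only surviving constraints are $c_{mn} = (-1)^{n-m-1}\frac{\bar b_{m+1}\cdots\bar b_{n-1}}{\bar a_{m+2}\cdots \bar a_n} c_{m,m+1}$ for $1\leq m \leq n-2$ with $3\leq n\leq r+1$, as claimed. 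The main obstacle I anticipate is bookkeeping: justifying the collapse $\beta_i^{(n)} = 0$ uniformly in $n$, matching the expanded inner product term-by-term with \eqref{eqn:alpha mn} while tracking conjugations, and confirming that every out-of-block or row-$0$ relation is genuinely forced rather than imposing a hidden extra constraint.
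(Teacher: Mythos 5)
Your proposal is correct, and its first half coincides with the paper's proof: the paper likewise uses Lemma \ref{lemma: truncated 1} to get $F=0$ and $L_{\tilde{M}_z}=|M_z|^{\h}L_{M_z}|M_z|^{-\h}$, derives $\tilde{X}_{mn}=|M_z|^{\h}P_{\clw}L_{M_z}^m|M_z|^{-1}L_{M_z}^{*n}|_{\clw}$ (its Step 1), exploits $b_0=b_1=0$ to collapse the $\beta_i^{(n)}$ and obtain $L_{M_z}^{*n}f_0=\frac{1}{\bar{a}_0}(\bar{b}_{n-1}f_{n-1}+\bar{a}_nf_n)$ (its Step 2), and disposes of the $m=0$ row (its Step 3). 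Where you genuinely diverge is the endgame. The paper's Steps 4--5 compute $P_{\clw}L_{M_z}^m f_j$ from the matrix $[L_{M_z}^p]$, run a three-case analysis on $m+k$ versus $r+2$, arrive at the vanishing condition \eqref{eqn: truncated 0 condition}, and then re-derive the recursion by hand for $m=1,2,3$ before remarking that ``the recursive situation is exactly the same'' as in Theorem \ref{thm: P kernel and Mz}; the identification with the standard Aluthge kernel is only recorded afterwards, in Corollary \ref{cor:truncated SK = SK}. You instead observe that $P_{\clw}L_{M_z}^m g=\BL g, L_{M_z}^{*m}f_0\BR f_0$, so the single pairing $\BL |M_z|^{-1}L_{M_z}^{*n}f_0, L_{M_z}^{*m}f_0\BR$ exhibits $\tilde{X}_{mn}$ ($m,n\geq 1$) as the fixed nonzero multiple $\frac{|a_0/a_1|}{|a_0|^2}$ of the coefficient $\alpha_{mn}$ in \eqref{eqn:alpha mn} with $P=|M_z|^{-1}$; this lets you quote Theorem \ref{thm: P kernel and Mz} as a black box, makes the out-of-block and row-zero relations visibly automatic (both sides vanish since $b_{n-1}=0$ and columns beyond $r+1$ of $[|M_z|^{-1}]$ are diagonal-only), and yields Corollary \ref{cor:truncated SK = SK} as an identity of kernel coefficients rather than as an a posteriori coincidence of criteria. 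What your route buys is economy and a structural explanation of the corollary, at the price of needing the small checks you already flag (the $m>n$ entries via $\tilde{X}_{nm}=\tilde{X}_{mn}^*$, the degenerate $b_0$-term at $m=1$, and that the trivially satisfied out-of-range recursions impose no hidden constraint); what the paper's route buys is explicit matrix formulas \eqref{eqn: truncated LP}--\eqref{eqn: truncated L*P} that it reuses elsewhere (e.g.\ in Example \ref{example: truncated SK not SK}). All the checks in your plan go through, so I see no gap.
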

\begin{proof}
We split the proof into several steps.

\noindent \textsf{Step 1:} First observe that $\tilde{k}(z,w) = \sum_{m,n=0}^\infty \tilde{X}_{mn} z^m \bar{w}^n$, where $\tilde{X}_{mn} = P_{\tilde \clw} L_{\tilde{M}_z}^{m} L_{\tilde{M}_z}^{*n}|_{\tilde \clw}$ for all $m,n \geq 0$. Now Lemma \ref{lemma: truncated 1} implies that ${L}_{\tilde{M}_z}^m {L}_{\tilde{M}_z}^{*n} = |M_z|^{\h} L_{M_z}^m |M_z|^{-1} L_{M_z}^{*n} |M_z|^{\h}$, and $P_{\tilde{\clw}} = I - \tilde{M}_z {L}_{\tilde{M}_z}$ by \eqref{eqn:P_W = I- TL}. Since $\tilde{M}_z = |M_z|^{\h} M_z |M_z|^{-\h}$ and ${L}_{\tilde{M}_z} = |M_z|^{\h} L_{M_z} |M_z|^{-\h}$, we have
\[
P_{\tilde \clw} = |M_z|^{\h} (I - M_z L_{M_z}) |M_z|^{-\h} = |M_z|^{\h} P_{\clw} |M_z|^{-\h}
\]
that is, $P_{\tilde \clw} |M_z|^{\h} = |M_z|^{\h} P_{\clw}$, which implies
\begin{equation}\label{eqn:tilde{X}_{mn}}
\tilde{X}_{mn} = |M_z|^{\h} P_{\clw}  L_{M_z}^m |M_z|^{-1} L_{M_z}^{*n}|_{\clw} \quad \quad (m,n \geq 0).
\end{equation}
As a passing remark, we note that the above equality holds so long as the finite rank operator $F = 0$ (this observation also will be used in Example \ref{example: truncated SK not SK}).

\noindent \textsf{Step 2:} Now we compute the matrix representation of $L^p_{M_z}$, $p \geq 1$. By Theorem \ref{Prop: L Matrix}, we have

\[
[L_{M_z}] = \begin{bmatrix}
0 & \frac{a_1}{a_0} & 0 & 0  & 0 & \dots
\\
0 & 0 & \frac{a_2}{a_1} & 0  & 0 & \ddots
\\
0 & 0 & d_2 & \frac{a_3}{a_2} & 0  &  \ddots
\\
0 & 0 &\frac{-d_2b_2}{a_3} & d_3 &\frac{a_4}{a_3} & \ddots
\\
\vdots & \vdots & \vdots & \vdots &\vdots &\ddots
\end{bmatrix}
\]
In particular, this yields
\[
P_{\mathcal{W}} L_{M_z}f_j = \begin{cases}
\frac{a_1}{a_0}f_0 & \mbox{if } j=1 \\
0 & \mbox{otherwise}.
\end{cases}
\]
Now we let $p \geq 2$. Recall from \eqref{eqn: beta n p} the definition $\beta_n^{(p)} = a_n \Big(\frac{-b_0}{a_0}\Big)^{p-n-1} \beta_n$ for all $n = 1, \ldots, p-1$, where $\beta_n = \frac{b_n}{a_n} - \frac{b_0}{a_0}$. Since $b_0 = 0$, we have $\beta_n^{(p)} = 0$, $1 \leq n < p-1$, and
\[
\beta_{p-1}^{(p)} =  a_{p-1} \beta_{p-1} = a_{p-1} \Big(\frac{b_{p-1}}{a_{p-1}} - \frac{b_0}{a_0}\Big),
\]
that is, $\beta_{p-1}^{(p)} = b_{p-1}$ for all $p \geq 2$. In particular, since $b_1 = 0$, we have $\beta_{1}^{(2)} = b_1 = 0$. Also recall from \eqref{eqn: d^p_n} the definition $d_n^{(p)} = b_n - \frac{a_n}{a_{n-p}} b_{n-p}$, $n \geq p$. Therefore, by \eqref{eqn:[L2]}, the matrix representation of $L^2_{M_z}$ is given by
\[
[L_{M_z}^2] = \begin{bmatrix}
0 & 0 & \frac{a_2}{a_0} & 0 & 0 & \cdots
\\
0 & 0 & \frac{d_2^{(2)}}{a_1} & \frac{a_{3}}{a_1} & 0 & \ddots
\\
0 & 0 & 0 & \frac{d_{3}^{(2)}}{a_2} & \frac{a_{4}}{a_2} & \ddots
\\
\vdots & \vdots & \vdots &\ddots &\ddots
\end{bmatrix},
\]
and in general, by \eqref{eqn:Lp}, we have
\begin{equation}\label{eqn: truncated LP}
[L_{M_z}^p] = \begin{bmatrix}
0 & \cdots & 0&  \frac{b_{p-1}}{a_0} & \frac{a_p}{a_0} & 0 & 0  & \cdots
\\
0 & \cdots & 0 & 0 & \frac{d_p^{(p)}}{a_1}   & \frac{a_{p+1}}{a_1} & 0 & \ddots
\\
0 &\cdots  & 0 & 0 & 0 & \frac{d_{p+1}^{(p)}}{a_2} & \frac{a_{p+2}}{a_2} & \ddots
\\
0 & \cdots & 0  & 0 & 0 & -\frac{d_{p+1}^{(p)} b_2}{a_2 a_3} & \frac{d_{p+2}^{(p)}}{a_3} & \ddots
\\
\vdots & \vdots & \vdots &\vdots &\vdots &\vdots &\ddots &\ddots
\end{bmatrix} \quad \quad (p \geq 2).
\end{equation}
Then
\begin{equation}\label{eqn: truncated L*P}
[L_{M_z}^{*p}] = \begin{bmatrix}
0 & 0 & 0 & 0 & \cdots
\\
\vdots & \vdots & \vdots & \vdots & \ddots
\\
0 & 0 & 0 & 0 & \ddots
\\
\frac{\bar{b}_{p-1}}{\bar{a}_0} & 0 & 0 & 0 & \ddots
\\
\frac{\bar{a}_p}{\bar{a}_0} & \frac{\bar{d}_p^{(p)}}{\bar{a}_1} & 0 & 0 & \ddots
\\
0 & \frac{\bar{a}_{p+1}}{\bar{a}_1} & \frac{\bar{d}_{p+1}^{(p)}}{\bar{a}_2} & - \frac{\bar{d}_{p+1}^{(p)} \bar{b}_2}{\bar{a}_2 \bar{a}_3} &  \ddots
\\
\vdots & \vdots & \vdots &\ddots &\ddots
\end{bmatrix} \quad \quad (p \geq 2).
\end{equation}

\noindent \textsf{Step 3:}
We prove that $\tilde{X}_{0n} = |M_z|^{\h} P_{\clw} |M_z|^{-1} L_{M_z}^{*n}|_{\clw} = 0$ for all $n \geq 1$. In what follows, the above matrix representations and the one of $|M_z|^{-1}$ in Lemma \ref{lemma: truncated} will be used repeatedly. By \eqref{eqn:[L*]}, we have $L_{M_z}^{*} f_0 = \frac{\bar{a}_1}{\bar{a}_0} f_1$, and hence
\[
\tilde{X}_{01} f_0 = |M_z|^{\h} P_{\clw}|M_z|^{-1}L_{M_z}^* f_0 = |M_z|^{\h} P_{\clw} (\frac{\bar{a_1}}{\bar{a_0}}[c_{11}f_1 + \bar{c}_{12} f_2 + \cdots]) = 0.
\]
On the other hand, if $n \geq 2$, then $L_{M_z}^{*n} f_0 = \frac{\bar{b}_{n-1}}{\bar{a}_0} f_{n-1} + \frac{\bar{a}_n}{\bar{a}_0} f_n$, and hence $|M_z|^{-1} f_0 \perp L_{M_z}^{*n} f_0$. This implies that $\tilde{X}_{0n} = 0$ for all $n\geq 2$. Therefore, all entries in the first row (and hence, also in the first column) of the formal matrix representation of $\tilde{k}(z,w)$  are zero except the $(0,0)$-th entry (which is $I_{\clw}$). Hence (see also \eqref{eqn:[k Mz]})
\[
\Big[\tilde k(z,w)\Big] =
\begin{bmatrix}
I_{\tilde \clw} & 0 & 0 & 0 & \cdots
\\
0 & \tilde{X}_{11} & \tilde{X}_{12} & \tilde{X}_{13} & \ddots
\\
0 & \tilde{X}_{12}^* & \tilde{X}_{22} & \tilde{X}_{23} & \ddots
\\
0 & \tilde{X}_{13}^* & \tilde{X}_{23}^* & \tilde{X}_{33} & \ddots
\\
\vdots & \vdots & \vdots &  \ddots & \ddots
\end{bmatrix}.
\]

\noindent \textsf{Step 4:} Our only interest here is to analyze the finite rank (of rank at most one) operator $\tilde{X}_{m, m+k}$, $m \geq 1$, $k \geq 2$. The matrix representation in \eqref{eqn: truncated L*P} implies
\begin{equation}\label{eqn: truncated L * m+k}
L_{M_z}^{* m+k} f_0 = \frac{1}{\bar{a}_0} \big(\bar{b}_{m+k-1} f_{m+k-1} + \bar{a}_{m+k} f_{m+k}),
\end{equation}
and hence
\begin{equation}\label{eqn: truncated M^-1L * m+k}
|M_z|^{-1} L_{M_z}^{* m+k} f_0 = \frac{1}{\bar{a}_0} (\bar{b}_{m+k-1} |M_z|^{-1} f_{m+k-1} + \bar{a}_{m+k} |M_z|^{-1} f_{m+k}).
\end{equation}
There are three cases to be considered:

\NI\textit{Case I $(m+k =r+2)$:} Note that $b_{r+1} = 0$. Then $|M_z|^{-1} L_{M_z}^{* r+2} f_0 = \frac{1}{\bar{a}_0} (\bar{a}_{r+2} |M_z|^{-1} f_{r+2})$, by \eqref{eqn: truncated M^-1L * m+k}, and thus
\[
L_{M_z}^m |M_z|^{-1} L_{M_z}^{* r+2} f_0  = \frac{\bar{a}_{r+2}}{\bar{a}_0} L^m_{M_z} |M_z|^{-1} f_{r+2} = \frac{\bar{a}_{r+2}}{\bar{a}_0} \Big| \frac{a_{r+3}}{a_{r+2}}\Big| L^m_{M_z} f_{r+2}.
\]
By \eqref{eqn: truncated LP}, we have $P_{\clw} L^m_{M_z} f_{r+2} = P_{\clw} L^m_{M_z} f_{m+k} = 0$ (note that $k\ge 2$), and hence
\[
P_{\clw} L_{M_z}^m |M_z|^{-1} L_{M_z}^{* r+2} f_0 = 0,
\]
that is, $\tilde{X}_{m, m+k} = 0$. It is easy to check that the equality also holds for $m=1$.

\NI\textit{Case II $(m+k-1 \geq r+2)$:} In this case, $b_{m+k-1}=0$ and
\[
|M_z|^{-1} f_{m+k} = \Big| \frac{a_{m+k+1}}{a_{m+k}}\Big| f_{m+k}.
\]
Again, by \eqref{eqn: truncated LP}, we have $P_{\clw} L^m_{M_z}  f_{m+k} = 0$, $k \geq 2$, and hence in this case also $\tilde{X}_{m, m+k} = 0$. Again, it is easy to check that the equality holds for $m=1$.

\NI\textit{Case III $(m+k < r+2)$:} We again stress that $m \geq 1$ and $k \geq 2$. It is useful to observe, by virtue of \eqref{eqn: truncated LP} (also see \eqref{eqn:L*n fj}), that
\[
P_{\clw} L^m_{M_z} f_j =
\begin{cases} \frac{b_{m-1}}{a_0} f_0 & \mbox{if } j=m-1
\\
\frac{a_{m}}{a_0} f_0 & \mbox{if } j=m
\\
0 & \mbox{otherwise}.
\end{cases}
\]
Now set $s = m+k-1$. The matrix representation of $|M_z|^{-1}$ in Lemma \ref{lemma: truncated} implies that
\[
|M_z|^{-1} f_{s} = c_{1 s} f_1 + c_{2 s} f_2 + \cdots + c_{ss} f_s + \bar{c}_{s, s+1} f_{s+1} + \cdots + \bar{c}_{s, r+1} f_{r+1}.
\]
By \eqref{eqn: truncated LP} and the above equality, we have
\begin{equation}\label{eqn: page 39; 1}
P_{\clw} L_{M_z}^m |M_z|^{-1} f_s = ( c_{m-1, s}  \frac{b_{m-1}}{a_0} + c_{m, s} \frac{a_{m}}{a_0}) f_0.
\end{equation}
Next, set $t = m+k$. Again, the matrix representation of $|M_z|^{-1}$ in Lemma \ref{lemma: truncated} implies that
\[
|M_z|^{-1} f_{t} = c_{1 t} f_1 + c_{2 t} f_2 + \cdots + c_{tt} f_t + \bar{c}_{t, t+1} f_{t+1} + \cdots + \bar{c}_{t, r+1} f_{r+1},
\]
and, again, by \eqref{eqn: truncated LP} and the above equality, we have
\begin{equation}\label{eqn: page 39; 2}
P_{\clw} L_{M_z}^m |M_z|^{-1} f_t = ( c_{m-1, t}  \frac{b_{m-1}}{a_0} + c_{m, t} \frac{a_{m}}{a_0}) f_0.
\end{equation}
It is easy to see that the equalities \eqref{eqn: page 39; 1} and \eqref{eqn: page 39; 2} also holds for $m=1$. The equality in \eqref{eqn: truncated L * m+k} becomes
\[
|M_z|^{-1} L_{M_z}^{* m+k} f_0 = \frac{1}{\bar{a}_0}(\bar{b}_{s} |M_z|^{-1} f_s + \bar{a}_t |M_z|^{-1} f_t),
\]
and hence, the one in \eqref{eqn: truncated M^-1L * m+k} implies
\[
P_{\clw} L_{M_z}^m |M_z|^{-1} L_{M_z}^{* m+k} f_0 = \frac{1}{|a_0|^2} [ \bar{b}_s (c_{m-1, s}  b_{m-1} + c_{m, s} a_m) + \bar{a}_t (c_{m-1, t}  b_{m-1} + c_{m, t} a_{m})] f_0.
\]
This shows that $P_{\clw} L_{M_z}^m |M_z|^{-1} L_{M_z}^{* m+k} f_0 = 0$  if and only if
\[
\bar{b}_s (c_{m-1, s}  b_{m-1} + c_{m, s} a_m) + \bar{a}_t (c_{m-1, t}  b_{m-1} + c_{m, t} a_{m}) = 0.
\]

\noindent \textsf{Step 5:} So far all we have proved is that $\tilde{k}$ is tridiagonal if and only if
\begin{equation}\label{eqn: truncated 0 condition}
b_{m-1} (\bar{b}_{m+k-1} c_{m-1, m+k-1} + \bar{a}_{m+k} c_{m-1, m+k}) + a_m (\bar{b}_{m+k-1} c_{m, m+k-1} + \bar{a}_{m+k} c_{m, m+k}) = 0,
\end{equation}
for all $m \geq 1$, $k \geq 2$ and $m+k < r+2$.

\NI If $m=1$, then using the fact that $b_0=0$, we have $c_{1, k+1} = -\frac{\bar{b}_k}{\bar{a}_{1+k}} c_{1, k}$, $2 \leq k < r+1$, and hence
\[
c_{1 n} = (-1)^{n-2} \frac{\prod_{i=2}^{n-1} \bar{b}_i}{\prod_{i=3}^n \bar{a}_i} c_{12} \quad \quad (3 \leq n \leq r+1).
\]
Similarly, if $m=2$, then \eqref{eqn: truncated 0 condition} together with the assumption that $b_1=0$ implies that
\begin{equation}\label{eqn: truncated c 2n}
c_{2n} = (-1)^{n-3} \frac{\prod_{i=3}^{n-1} \bar{b}_i}{\prod_{i=4}^n \bar{a}_i} c_{23} \quad \quad (4 \leq n \leq r+1).
\end{equation}
Next, if $m=3$, then \eqref{eqn: truncated 0 condition} again implies
\[
b_2(\bar{b}_{k+2} c_{2, k+2} + \bar{a}_{k+3} c_{2, k+3}) + a_3 (\bar{b}_{k+2} c_{3, k+2} + \bar{a}_{k+3} c_{3, k+3}) = 0 \quad \quad (k < r-1).
\]
On the other hand, by \eqref{eqn: truncated c 2n}, we have $c_{2, k+3} = -\frac{\bar{b}_{k+2}}{\bar{a}_{k+3}} c_{2, k+2}$, and hence $\bar{b}_{k+2} c_{3, k+2} + \bar{a}_{k+3} c_{3, k+3} = 0$, which implies $c_{3, k+3} = -\frac{\bar{b}_{k+2}}{\bar{a}_{k+3}} c_{3, k+2}$, $k < r-1$. Now, evidently the recursive situation is exactly the same as that of the proof of Theorem \ref{thm: P kernel and Mz} (more specifically, see \eqref{eqn:alpha mn}). This completes the proof of the theorem.
\end{proof}

As is clear by now, by virtue of Theorem \ref{thm: P kernel and Mz}, the classification criterion of the above theorem is also a classification criterion of tridiagonality of standard Aluthge kernels. Therefore, we have the following:

\begin{Corollary}\label{cor:truncated SK = SK}
If $\clh_k$ is a truncated space, then the Shimorin-Aluthge kernel of $M_z$ is tridiagonal if and only if the standard Aluthge kernel of $M_z$ is tridiagonal.
\end{Corollary}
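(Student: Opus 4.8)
The plan is to reduce both sides of the asserted equivalence to a single condition on the matrix entries $c_{mn}$ of $|M_z|^{-1}$ displayed in Lemma~\ref{lemma: truncated}. The starting point is the observation that, in the scalar-valued setting, the standard Aluthge kernel is
\[
\tilde{k}(z,w) = \BL |M_z|^{-1} k(\cdot, w), k(\cdot, z) \BR_{\clh_k},
\]
which is precisely the kernel $K$ of Theorem~\ref{thm: P kernel and Mz} associated with the operator $P = |M_z|^{-1}$; this $P$ is positive and invertible because $M_z$ is left-invertible on the analytic tridiagonal space $\clh_k$. Hence Theorem~\ref{thm: P kernel and Mz} applies verbatim and characterizes tridiagonality of the standard Aluthge kernel in terms of the entries $c_{mn}$ of $|M_z|^{-1}$.

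Next I would compare this characterization with the one furnished by Theorem~\ref{thm: truncated} for the Shimorin-Aluthge kernel. The latter already phrases its criterion in terms of the same entries $c_{mn}$ of the middle block of $[|M_z|^{-1}]$, namely
\[
c_{mn}=(-1)^{n-m-1} \frac{\bar{b}_{m+1} \cdots \bar{b}_{n-1}}{\bar{a}_{m+2} \cdots \bar{a}_n} c_{m, m+1} \qquad (1 \leq m \leq n-2,\ 3 \leq n \leq r+1).
\]
It therefore suffices to verify that, for a truncated kernel, the full list of conditions produced by Theorem~\ref{thm: P kernel and Mz} (applied to $P=|M_z|^{-1}$) collapses to exactly this restricted list.

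The reduction exploits the block structure of $[|M_z|^{-1}]$ in Lemma~\ref{lemma: truncated} together with the vanishing pattern $b_0 = b_1 = 0$ and $b_n = 0$ for $n \geq r+1$. The $c_{0n}$-conditions of Theorem~\ref{thm: P kernel and Mz} are vacuous: the $0$-th row and column of $[|M_z|^{-1}]$ carry no off-diagonal entries, so $c_{01}=0$ and $c_{0n}=0$ for $n \geq 2$, whence both sides of the corresponding identity vanish. For an off-diagonal condition with $\max(m,n)\ge r+2$, the entry $c_{mn}$ lies outside the middle block, where $[|M_z|^{-1}]$ is diagonal, and so is zero; simultaneously the product $\bar{b}_{m+1}\cdots \bar{b}_{n-1}$ always contains a factor $\bar{b}_i$ with $i \geq r+1$ (hence equal to $0$), so the right-hand side vanishes as well. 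Thus every such condition holds automatically, and what survives is precisely the family of conditions confined to the middle $(r+1)\times(r+1)$ block, i.e.\ the criterion of Theorem~\ref{thm: truncated}.

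Combining the three steps yields the corollary: the standard Aluthge kernel is tridiagonal if and only if the $c_{mn}$ satisfy the criterion of Theorem~\ref{thm: P kernel and Mz}, which for truncated $k$ is equivalent to the middle-block criterion of Theorem~\ref{thm: truncated}, which in turn holds if and only if the Shimorin-Aluthge kernel is tridiagonal. The only delicate point is the index bookkeeping in the reduction—confirming that each condition outside the middle block is automatically satisfied because the relevant $b$-product vanishes and the corresponding entry of $|M_z|^{-1}$ is already zero—but this is routine given the prescribed vanishing pattern of the $b_n$.
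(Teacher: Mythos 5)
Your proposal is correct and follows essentially the same route as the paper: the paper derives the corollary by observing (via the remark preceding it and Step 5 of the proof of Theorem \ref{thm: truncated}) that the criterion of Theorem \ref{thm: truncated} coincides with the criterion of Theorem \ref{thm: P kernel and Mz} applied to $P = |M_z|^{-1}$, whose kernel $\langle |M_z|^{-1}k(\cdot,w), k(\cdot,z)\rangle_{\clh_k}$ is the standard Aluthge kernel. Your explicit bookkeeping---that the $c_{0n}$-conditions and all conditions with $n \geq r+2$ hold automatically because of the block structure in Lemma \ref{lemma: truncated} and the vanishing of $b_{n-1}$ for $n-1 \geq r+1$---is exactly the verification the paper leaves implicit with ``as is clear by now.''
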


\section{Final comments and results}\label{sect: final}

First we comment on the assumptions in the definition of truncated kernels (see Section \ref{sec: truncated}). The main advantage of the truncated space corresponding to a truncated kernel is that $F = 0$, where $F$ is the finite rank operator as in \eqref{eqn:F = rank one 2}. In this case, as already pointed out, we have $L_{\tilde{M}_z} = |M_z|^{\h} L_{M_z} |M_z|^{-\h}$. This brings a big cut down in computation. On the other hand, quite curiously, if
\[
b_0 = b_1 = 1 \mbox{~or~} b_0 =1,
\]
and all other $b_i$'s are equal to $0$, then the corresponding standard Aluthge kernel of $M_z$ is tridiagonal kernel but the corresponding Shimorin-Aluthge kernel of $M_z$ is not a tridiagonal kernel. Since computations are rather complicated in the presence of $F$, we only present the result for the following (convincing) case:

\begin{Example}\label{example: truncated SK not SK}
Let $a_n = b_0 = b_1 = 1$ and $b_m = 0$ for all $n \geq 0$ and $m \geq 2$. Let $\clh_k$ denote the tridiagonal space corresponding to the basis $\{(a_n + b_n z) z^n\}_{n\geq 0}$. By \eqref{eqn: Mz matrix} and Theorem \ref{Prop: L Matrix}, we have
\[
[M_z] = \begin{bmatrix}
0 & 0 &  0 & 0 & 0 & \cdots
\\
1 & 0 & 0 & 0 & 0 & \ddots
\\
0 & 1 & 0 & 0 & 0 & \ddots
\\
0 & 1 & 1 & 0 & 0 & \ddots
\\
0 & 0 & 0 & 1 & 0 & \ddots
\\
\vdots & \vdots & \vdots &\vdots &\ddots &\ddots
\end{bmatrix}
\mbox{~~and~~}
[L_{M_z}] = \begin{bmatrix}
0 & 1 &  0 & 0 & 0 & 0 & \cdots
\\
0 & 0 & 1 & 0 & 0 & 0 & \ddots
\\
0 & 0 & -1 & 1 & 0 & 0& \ddots
\\
0 & 0 & 0 & 0 & 1 & 0 & \ddots
\\
\vdots & \vdots & \vdots & \vdots & \vdots &\ddots &\ddots
\end{bmatrix},
\]
respectively. Hence, applying $L_{M_z} L_{M_z}^* = |M_z|^{-2}$ (see Lemma \ref{prop: L tilde T}) to this, we obtain
\[
|M_z|^{-2} =
\begin{bmatrix}
1 & 0 &  0 & 0 & 0 \cdots
\\
0 & 1 & -1 & 0 & 0 & \ddots
\\
0 & -1 & 2 & 0 & 0 & \ddots
\\
0 & 0 & 0 & 1 & 0 & \ddots
\\
\vdots & \vdots & \vdots & \ddots & \ddots
\\
\end{bmatrix}.
\]
Suppose $\alpha=\frac{3+\sqrt{5}}{2}$ and $\beta=\frac{3-\sqrt{5}}{2}$. It is useful to observe that $(1 - \alpha) ( 1 - \beta) +1  = 0$. Set $\begin{bmatrix}a & b \\b & c \end{bmatrix} = \begin{bmatrix}1 & -1 \\-1 & 2 \end{bmatrix}^{\frac{1}{2}}$, where $a = \frac{1}{\sqrt{5}}[\sqrt{\alpha}(1-\beta)-\sqrt{\beta}(1-\alpha)]$ and $b = \frac{1}{\sqrt{5}}[-\sqrt{\alpha}+\sqrt{\beta}]$, and $c = \frac{1}{\sqrt{5}}[-\sqrt{\alpha}(1-\alpha)+\sqrt{\beta}(1-\beta)]$. Clearly
\[
|M_z|^{-1} =
\begin{bmatrix}
1 & 0 & 0 & 0
\\
0 & a & b & 0
\\
0& b & c & 0
\\
0 & 0 & 0 & I
\end{bmatrix}.
\]
From this it follows that $|M_z| f_0 = f_0$, and hence the finite rank operator $F$, as in \eqref{eqn:F = rank one 2}, is given by
\[
F g = \BL g, f_0 \BR_{\clh_k} \Big((M_z^* |M_z| M_z)^{-1} M_z^* |M_z| f_0\Big)= 0 \quad \quad (g \in \clh_k).
\]
Then $F = 0$, and hence \eqref{eqn:tilde L similar L+F} implies that $L_{\tilde{M}_z} = |M_z|^{\h} L_{M_z} |M_z|^{-\h}$. By \eqref{eqn:tilde{X}_{mn}} (also see Step $1$ in the proof of Theorem \ref{thm: truncated}), the coefficient of $z^m \bar{w}^n$ of the Shimorin-Aluthge kernel $\tilde{k}$ is given by $\tilde{X}_{mn} = |M_z|^{\h} P_{\clw}  L_{M_z}^m |M_z|^{-1} L_{M_z}^{*n}|_{\clw}$, $m,n \geq 0$. We compute the coefficient of $z \bar{w}^3$ as
\[
\begin{split}
P_{\clw} L_{M_z} |M_z|^{-1} L_{M_z}^{*3} f_0 & = P_{\clw} L_{M_z} |M_z|^{-1} L_{M_z}^{*2} f_1
\\
& =  P_{\clw} L_{M_z} |M_z|^{-1} L_{M_z}^{*} f_2
\\
& =  P_{\clw} L_{M_z} |M_z|^{-1} (- f_2 + f_3)
\\
& = P_{\clw} L_{M_z} (-b f_1 - c f_2 + f_3)
\\
& = P_{\clw} L_{M_z} (-b f_1)
\\
& = - b f_0.
\end{split}
\]
But $
b = \frac{1}{\sqrt{5}}[-\sqrt{\alpha}+\sqrt{\beta}] \neq 0$, and hence $\tilde{X}_{13} \neq 0$. This implies that the Shimorin-Aluthge kernel is not tridiagonal. On the other hand, the matrix representation of $|M_z|^{-1}$ implies right away that the standard Aluthge kernel is tridiagonal (see Theorem \ref{thm: P kernel and Mz}).
\end{Example}

Now we return to standard Aluthge kernels of shifts (see the definition following Theorem \ref{thm: revisit model}). Let $\clh_k \subseteq \clo(\D)$ be a reproducing kernel Hilbert space. Suppose $M_z$ on $\clh_k$ is left-invertible. Then Theorem \ref{thm: revisit model} says that $\tilde{M}_z$ and $M_z$ on $\clh_{\tilde{k}} (\subseteq \clo(\D))$ are unitarily equivalent, where
\[
\tilde{k}(z, w) := \BL |M_z|^{-1} k(\cdot, w), k(\cdot, z) \BR_{\clh_k} = \Big(|M_z|^{-1} k(\cdot, w)\Big)(z),
\]
for all $z, w \in \D$. In the following, as a direct application of Theorem \ref{thm: P kernel and Mz}, we address the issue of tridiagonal representation of the shift $M_z$ on $\clh_k$.

\begin{Corollary}\label{cor: M TD implies M TD}
In the setting of Theorem \ref{thm: revisit model}, assume in addition that $\cle = \mathbb{C}$ and $\clh_{\tilde{k}}$ is a tridiagonal space with respect to the orthonormal basis $\{f_n\}_{n \geq 0}$, where $f_n(z) = (a_n +b_n z)z^n$, $n\geq 0$. Then $\clh_k$ is a tridiagonal space if and only if
\[
U |M_z| U^* = \begin{bmatrix}
c_{00} & c_{01} & -\frac{\bar{b_1}}{\bar{a_2}}c_{01} & \frac{\bar{b}_1 \bar{b}_2}{\bar{a}_2 \bar{a}_3} c_{01} & \dots
\\
\bar{c}_{01} & c_{11} & c_{12}  &-\frac{\bar{b}_2} {\bar{a}_3}c_{12} & \ddots
\\
-\frac{b_1}{a_2}\bar{c}_{01} & \bar{c}_{12} & c_{22} &c_{23} & \ddots
\\
\frac{b_1b_2}{a_2a_3}\bar{c}_{01} & -\frac{b_2}{a_3}\bar{c}_{12} & \bar{c}_{23} & c_{33} & \ddots
\\
\vdots & \vdots & \vdots & \ddots & \ddots
\end{bmatrix},
\]
with respect to the basis $\{f_n\}_{n\geq 0}$.
\end{Corollary}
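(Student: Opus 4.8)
The plan is to recognize the kernel $k$ itself as one of the positive-operator kernels classified in Theorem \ref{thm: P kernel and Mz}, but built over the \emph{tridiagonal} space $\clh_{\tilde k}$ rather than over $\clh_k$. Recall from the proof of Theorem \ref{thm: revisit model} that the unitary $U : \clh_k \raro \clh_{\tilde k}$ is $U h = |M_z|^{-\h} h$ and that the reproducing kernel of $\clh_{\tilde k}$ is $\tilde k(\cdot, w) = |M_z|^{-1} k(\cdot, w)$. I would first set $B := U |M_z| U^*$, a positive operator on $\clh_{\tilde k}$, positivity being inherited from $|M_z|$ since conjugation by a unitary preserves it.

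The key step is to prove the reversal identity
\[
k(z, w) = \BL B\, \tilde k(\cdot, w), \tilde k(\cdot, z) \BR_{\clh_{\tilde k}} \qquad (z, w \in \D).
\]
To carry this out I would first note that $U^* \tilde k(\cdot, w) = |M_z|^{\h} \tilde k(\cdot, w) = |M_z|^{\h} |M_z|^{-1} k(\cdot, w) = |M_z|^{-\h} k(\cdot, w)$, then use unitarity of $U$ to move $B$ back to $|M_z|$ acting on $\clh_k$, and finally use the self-adjointness of $|M_z|^{-\h}$ together with the reproducing property of $\clh_k$ to collapse the expression to $\BL k(\cdot, w), k(\cdot, z)\BR_{\clh_k} = k(z,w)$.

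Once this identity is in hand, the conclusion follows at once from Theorem \ref{thm: P kernel and Mz} applied to the tridiagonal space $\clh_{\tilde k}$ (with orthonormal basis $\{f_n\}$, $f_n(z) = (a_n + b_n z) z^n$) and the positive operator $P = B = U|M_z| U^*$: the identity exhibits $k$ precisely as the kernel $(z,w) \mapsto \BL P\, \tilde k(\cdot, w), \tilde k(\cdot, z)\BR_{\clh_{\tilde k}}$ of that theorem, so $k$ is tridiagonal, equivalently $\clh_k$ is a tridiagonal space, if and only if the matrix of $B$ relative to $\{f_n\}$ has the form displayed there, which is exactly the matrix in the statement.

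I expect the one genuinely delicate point to be the reversal identity, where I must keep the two distinct inner products (those of $\clh_k$ and $\clh_{\tilde k}$) carefully separated and use the exact descriptions of $U$ and of $\tilde k(\cdot, w)$ furnished by Theorem \ref{thm: revisit model}. After that, the classification is a direct quotation of Theorem \ref{thm: P kernel and Mz}, requiring no further computation.
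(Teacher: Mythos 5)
Your proposal is correct and takes essentially the same route as the paper's proof: both set $P = U|M_z|U^*$, verify the identity $k(z,w) = \BL P\,\tilde{k}(\cdot, w), \tilde{k}(\cdot, z)\BR_{\clh_{\tilde{k}}}$ using the explicit descriptions $Uh = |M_z|^{-\h}h$ and $\tilde{k}(\cdot, w) = |M_z|^{-1}k(\cdot, w)$ from Theorem \ref{thm: revisit model}, and then quote Theorem \ref{thm: P kernel and Mz} on $\clh_{\tilde{k}}$. The only cosmetic difference is that you phrase the computation via $U^*\tilde{k}(\cdot, w) = |M_z|^{-\h}k(\cdot, w)$ where the paper uses the equivalent $U\bigl(|M_z|^{-\h}k(\cdot, w)\bigr) = \tilde{k}(\cdot, w)$.
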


\begin{proof} Recall from Theorem \ref{thm: revisit model} that $\clh_{\tilde{k}} = |M_z|^{-\frac{1}{2}} \clh_k$ and $U h = |M_z|^{-\frac{1}{2}} h$, $h \in \clh_k$, defines the intertwining unitary. Set $P := U |M_z| U^*$. Then $P \in \clb(\clh_{\tilde{k}})$ is a positive operator, and for any $z, w \in \D$, we have
\[
\begin{split}
\BL P \tilde{k}(\cdot, w), \tilde{k}(\cdot, z) \BR_{\clh_{\tilde{k}}} & = \BL |M_z| U^* \tilde{k}(\cdot, w), U^* \tilde{k}(\cdot, z) \BR_{\clh_k}
\\
& = \BL |M_z| |M_z|^{- \frac{1}{2}} {k}(\cdot, w), |M_z|^{- \frac{1}{2}} {k}(\cdot, z) \BR_{\clh_k}
\\
& = \BL {k}(\cdot, w), {k}(\cdot, z) \BR_{\clh_k},
\end{split}
\]
as $U \Big(|M_z|^{- \frac{1}{2}} {k}(\cdot, w)\Big) = \tilde k(\cdot, w)$. Hence $k(z, w) = \BL P \tilde{k}(\cdot, w), \tilde{k}(\cdot, z) \BR_{\clh_{\tilde{k}}}$, $z, w \in \D$. The result now follows from Theorem \ref{thm: P kernel and Mz}.
\end{proof}

In particular, if $\tilde{k}$ is a tridiagonal kernel, then for $k$ to be a tridiagonal kernel, it is necessary (as well as sufficient) that
$U |M_z| U^*$ is of the form as in the above statement.

We conclude this paper with the following curious observation which stems from the matrix representations of Shimorin left inverses of shifts on analytic tridiagonal spaces (see Theorem \ref{Prop: L Matrix}). Let $\clh_k$ be an analytic tridiagonal space. Recall that $L_{M_z}$ denotes the Shimorin left inverse of $M_z$. By Lemma \ref{prop: L tilde T}, we have $|M_z|^{-2} = L_{M_z} L_{M_z}^*$. From the matrix representation of $L_{M_z}$ in  Theorem \ref{Prop: L Matrix}, one can check that the matrix representation of $|M_z|^{-2}$ satisfies the conclusion of Theorem \ref{thm: P kernel and Mz}. Consequently, the positive definite scalar kernel
\[
K(z, w) = \langle |M_z|^{-2} k(\cdot, w), k(\cdot, z) \rangle_{\clh_k} \quad \quad (z, w \in \D),
\]
is a tridiagonal kernel. On the other hand, consider
\[
a_n = \begin{cases}
2 & \mbox{if } n = 2 \\
1 & \mbox{otherwise,}
\end{cases} \text{ and }
b_n = \begin{cases}
1 & \mbox{if } n = 2 \\
0 & \mbox{otherwise.}
\end{cases}
\]
Then the shift $M_z$ on the analytic tridiagonal space $\clh_k$ corresponding to the orthonormal basis $\{f_n\}_{n\geq 0}$, where $f_n(z) = (a_n + b_n z) z^n$, $n \geq 0$, is left-invertible. However, a moderate computation reveals that the matrix representation of $|M_z|^{-1}$ does not satisfy the conclusion of Theorem \ref{thm: P kernel and Mz}. In other words, the positive definite scalar kernel
\[
K(z, w) = \langle |M_z|^{-1} k(\cdot, w), k(\cdot, z) \rangle_{\clh_k} \quad \quad (z, w \in \D),
\]
is not a tridiagonal kernel.

\vspace{0.05in}

\noindent\textbf{Acknowledgement:}
The research of the second named author is supported in part by NBHM grant NBHM/R.P.64/2014, and the Mathematical Research Impact Centric Support (MATRICS) grant, File No: MTR/2017/000522 and Core Research Grant, File No: CRG/2019/000908, by the Science and Engineering Research Board (SERB), Department of Science \& Technology (DST), Government of India.

\bibliographystyle{amsplain}

\begin{thebibliography}{99}

\bibitem{Adam 2013}
G. Adams, N. Feldman and P. McGuire, {\em Tridiagonal reproducing kernels and subnormality}, J. Operator Theory 70 (2013), 477-494.

\bibitem{Adam 2001}
G. Adams and P. McGuire, {\em Analytic tridiagonal reproducing kernels}, J. Lond. Math. Soc. 64 (2001) 722–738.


\bibitem{Paulsen 92}
G. Adams, P. McGuire and V. Paulsen, {\em Analytic reproducing kernels and multiplication operators}, Illinois J. Math. 36 (1992) 404–419.


\bibitem{Aluthge}
A. Aluthge, {\em On $p$-hyponormal operators for $0<p<1$}, Integral Equations Operator Theory, 13 (1990), 307–315.

\bibitem{Aronszajn 1050}
N. Aronszajn, {\em Theory of reproducing kernels}, Trans. Am. Math. Soc. 68 (1950) 337–404.


\bibitem{CCP}
B. C\'{e}lari\'{e}s, I. Chalendar and J. Partington, {\em Universality and models for semigroups of operators on a Hilbert space}, J. Operator Theory 82 (2019), 173–188.


\bibitem{GSP}
E. Gallardo-Guti\'{e}rrez, J. Partington and D. Seco, {\em On the wandering property in Dirichlet spaces}, Integral Equations Operator Theory 92 (2020), no. 2, Paper No. 16.




\bibitem{Halmos}
P. Halmos, {\em A Hilbert space problem book}, Second edition. Graduate Texts in Mathematics, 19. Encyclopedia of Mathematics and its Applications, 17. Springer-Verlag, New York-Berlin, 1982.

\bibitem{Haakan}
H. Hedenmalm, B. Korenblum and K. Zhu, {\em Theory of Bergman spaces}, Graduate Texts in Mathematics, 199. Springer-Verlag, New York, 2000.

\bibitem{Stochel}
Z. Jab{\l}o\'{n}ski and J. Stochel, {\em Unbounded $2$-hyperexpansive operators}, Proc. Edinb. Math. Soc. (2) 44 (2001), 613–629.

\bibitem{Pearcy-2000}
I. Jung, E. Ko and C. Pearcy, {\em Aluthge transforms of operators}, Integral Equations Operator Theory. 37 (2000), 437–448.


\bibitem{Woerdeman}
V. Paulsen and H. Woerdeman, {\em Reverse Cholesky factorization and tensor products of nest algebras,} Proc. Amer. Math. Soc. 146 (2018), 1693–1698.

\bibitem{Pawel}
P. Pietrzycki, {\em A Shimorin-type analytic model on an annulus for left-invertible operators and applications}, J. Math. Anal. Appl. 477 (2019), 885–911.

\bibitem{Richter}
S. Richter, {\em Invariant subspaces of the Dirichlet shift}, J. reine angew. Math. 386 (1988) 205–220.


\bibitem{Shields}
A.  Shields, \emph{Weighted shift operators and analytic function theory}, Topics in Operator Theory, Math. Surveys Monographs, vol.
13, Amer. math. Soc., Providence, RI 1974, 49-128.

\bibitem{SS 2003}
S. Shimorin, {\em On Beurling-type theorems in weighted $l^2$ and Bergman spaces}, Proc. Amer. Math. Soc. 131 (2003), 1777–1787.

\bibitem{SS}
S. Shimorin, {\em Wold-type decompositions and wandering subspaces for operators close to isometries}, J. Reine Angew. Math. 531 (2001), 147–189.

\end{thebibliography}

\end{document}